\newtheorem{thm}{Theorem}[section]
\newtheorem{cor}[thm]{Corollary}
\newtheorem{lem}[thm]{Lemma}
\newtheorem{prop}[thm]{Proposition}
\theoremstyle{definition}
\newtheorem{rem}[thm]{Remark}
\numberwithin{equation}{section}
\begin{document}
%%%%%%%%%%%%%%%%%%%%%%%%%%%%%%%%%%%%%%%%%%%%%%%%%%%%%%%%%%%%%%%%%%%%%%%%
%\thanks{***************}
\subjclass[2010]{}
%{53C17, 53C22, 37K05}
\keywords{}

%%%%%%%%%%%%%%%%%%%%%%%%%%%%%%%%%%%%%%%%%%%%%%%%%%%%%%%%%%%%%%%%%%%%%%%%
\title[Complete integrability of subriemannian geodesic flow on $\mathbb{S}^7$]{Complete integrability of subriemannian \\ geodesic flows on $\mathbb{S}^7$}

\author{Wolfram Bauer, Abdellah Laaroussi, Daisuke Tarama}
%\dedicatory{}

\thanks{} 

\address{Wolfram Bauer\endgraf
Institut f\"{u}r Analysis, Leibniz Universit\"{a}t  \endgraf
Welfengarten 1, 30167 Hannover, Germany \endgraf
}
\email{bauer@math.uni-hannover.de}

\address{Abdellah Laaroussi \endgraf
Institut f\"{u}r Analysis, Leibniz Universit\"{a}t  \endgraf
Welfengarten 1, 30167 Hannover, Germany \endgraf
}
\email{laarousi.abdellah@gmail.com}

\address{Daisuke Tarama \footnote{Partially supported by JSPS KAKENHI Grant Numbers JP19K14540, JP22H01138, JP23H04481.}\endgraf
Department of Mathematical Sciences, Ritsumeikan University, \endgraf
1-1-1 Nojihigashi, Kusatsu, Shiga, 525-8577, Japan. 
}
\email{dtarama@fc.ritsumei.ac.jp}

 \date{\today}
%%%%%%%%%%%%%%%%%%%%%%%%%%%%%%%%%%%%%%%%%%%%%%%%%%%%%%%%%%%%%%%%%%%%%%%%%
%%%%%%%%%%%%%%%%%%%%%%%%%%%%%%%%%%%%%%%%%%%%%%%%%%%%%%%%%%%%%%%%%%%%%%%%%
\begin{abstract}
Four subriemannian (SR) structures over the Euclidean sphere $\mathbb{S}^7$ are considered in accordance to the previous literature. 
The defining bracket generating distribution is chosen as  the horizontal space in 
the Hopf fibration, the  quaternionic Hopf fibration or spanned by a suitable number of canonical vector fields.
In all cases the induced SR geodesic flow on $T^*\mathbb{S}^7$ is studied. 
Adapting a method by A. Thimm in \cite{T}, a maximal set of functionally
independent and Poisson commuting first integrals are constructed, including the corresponding SR Hamiltonian. 
As a result, the complete integrability  in the sense of Liouville is proved for the SR geodesic flow. 
It is observed that these first integrals arise as the symbols of commuting second order differential operators one of them being a (not necessarily intrinsic) sublaplacian. 
On the way one explicitly derives the Lie algebras of all SR isometry groups intersected with $O(8)$. 
\vspace{1mm}\\
{\bf keywords:} subriemannian manifold, Thimm's method, commuting differential operators
\vspace{1mm}\\
{\bf Mathematical Subject Classification 2020:} Primary:  53C17, Secondary:  37K10
\end{abstract}
%%%%%%%%%%%%%%%%%%%%%%%%%%%%%%%%%%%%%%%%%%%%%%%%%%%%%%%%%%%%%%%%%%%%%%%%%%
%%%%%%%%%%%%%%%%%%%%%%%%%%%%%%%%%%%%%%%%%%%%%%%%%%%%%%%%%%%%%%%%%%%%%%%%%%
\maketitle
\tableofcontents
\thispagestyle{empty}
%%%%%%%%%%%%%%%%%%%%%%%%%%%%%%%%%%%%%%%%%%%%%%%%%%%%%%%%%%%%%%%%%%%%%%%%%
\section{Introduction}\label{S_Intro}
\label{section-1}
%%%%%%%%%%%%%%%%%%%%%%%%%%%%%%%%%%%%%%%%%%%%%%%%%%%%%%%%%%%%%%%%%%%%%%%%%
Subriemannian (SR) geometry provides a mathematical framework for the study of motions under non-holonomic constraints. 
A subriemannian manifold $M$ means a triple $(M, \mathcal{D}, g)$ consisting of a smooth manifold $M$, a bracket generating tangential distribution $\mathcal{D}$ over $M$ and a family 
of inner products $g=\left(g_p\right)_{p\in M}$ on $\mathcal{D}$ smoothly varying with the base point $p \in M$.  All subriemannian manifolds appearing in the present paper are equiregular of step two 
(see Section \ref{section-2} or \cite{ABB, Mo} for definitions and more details). 

Similar to the case of a Riemannian manifold (i.e. the case $\mathcal{D}=TM$) one assigns to $M$ a number of mathematical objects that are closely linked to its geometric structure: to each point of $M$ one can attach a local model ({\it tangent cone}) which is a stratified Lie group and generalizes the tangent space in Riemannian geometry. 
Naturally, $(M, \mathcal{D}, g)$ carries a metric structure inducing a Hausdorff dimension which often plays a role in the analysis of induced geometric operators on $M$, e.g. 
see \cite{BL,Bena,Verd}.

From an analytical point of view it has been observed (e.g. see \cite{ABB,AGBR, BR,Hoermander}) that an equiregular subriemannian manifold carries an intrinsic sublaplacian $\Delta_{\textup{sub}}$ which, based on the bracket generating property of $\mathcal{D}$, is a hypoelliptic second order differential operator. 
In what follows, the principle symbol $H_{\textup{sub}}$ of $-\frac{1}{2}\Delta_{\textup{sub}}$ is called the {\it subriemannian Hamiltonian}. 
In a standard way it induces the geodesic flow with respect to the SR structure on the cotangent bundle $T^*M$. A rich interplay between these objects has been discovered by many authors and led to interesting links between analysis and geometry 
\cite{BauWang-1,BauWang,BL,BLT,Bena,Verd}. 
The degeneracy of the SR Hamiltonian is one source of difficulties that arise in the analysis of the sublaplacian. In fact, it causes analytic or geometric 
effects that cannot be observed in Riemannian geometry.  
\vspace{1mm}\par 
Since the analysis on $M$ intimately depends on the particular nature of the SR structure one often poses further assumptions or studies explicit model cases. 
In the present paper  $M$ is chosen to be the Euclidean unit sphere $\mathbb{S}^7$ and in all cases $g$ is defined as the restriction to 
$\mathcal{D}$ of the standard Riemannian metric on $\mathbb{S}^7$. 
Hence by varying $\mathcal{D}$ one obtains a family of SR structures on $\mathbb{S}^7$. 
Two of these SR geometries are induced in a rather standard way (see \cite{Mo}) from principle bundle structures on $\mathbb{S}^7$. More precisely, 
$\mathcal{D}$ is defined as the horizontal space in the Hopf fibration and the quaternionic Hopf fibration, respectively. As is well known, the horizontal distributions $\mathcal{D}$ are 
bracket generating of step 2, see \cite{BFI-0,MaMo12,Mo}. The construction of these SR structures generalizes to Euclidean spheres of dimension $2n+1$ and $4n+3$ with $n \in \mathbb{N}$, respectively.  Among other results the heat kernel of the intrinsic sublaplacians $\Delta_{\textup{sub}}$ as well as its asymptotic properties are explicitly known in 
both cases and allow a spectral decomposition of $\Delta_{\textup{sub}}$, see \cite{BauWang, BauWang-1,PGreiner}. 
\vspace{1mm}\par 
As was observed in \cite{BFI} the sphere $\mathbb{S}^7$ carries two other SR geometries of rank $4$ and $5$, respectively, which were called {\it trivializable}. 
In fact, in these examples the distribution $\mathcal{D}$ is trivial as a vector bundle and spanned by a choice of canonical vector fields induced from a Clifford module 
action on $\mathbb{R}^8$, see \cite{Adams,James}. 
For the convenience of the reader, the construction is described in Section \ref{section-2} below. 
The recent paper \cite{BL}  compares the SR geometries on $\mathbb{S}^7$ induced by the horizontal space in the quaternionic Hopf fibration and 
the rank 4 trivializable SR structure with respect to their geometric and analytical properties. Both geometries are of rank 4 and step 2 but non-isometric in subriemannian sense and, in short, of quite different nature. As for the rank 5 trivializable SR structure the heat kernel and the fundamental solution of the {\it conformal sublaplacian} have been derived explicitly in \cite{BLT}. 
\vspace{1mm}\par
The current paper presents an analysis of the {\it subriemannian geodesic flow} simultaneously for all the above SR structures on $\mathbb{S}^7$. 
Although, in the case of a trivializable SR structure,  one can work with global affine coordinates of $\mathbb{R}^8$, 
it seems to be a non-trivial task to find all solutions to Hamilton's equation explicitly. 
To analyze the explicit solvability of the geodesic flows, one can use the notion of Liouville integrability. 
A Hamiltonian system on a $2n$-dimensional symplectic manifold is called \textit{completely integrable} if 
there exist $n$ functionally independent and Poisson commuting first integrals. 
Then, under suitable conditions, {\it Liouville-Arnol'd Theorem} states that the momentum mapping (cf. Section \ref{Section-4}) 
given as the collection of $n$ independent first integrals defines a 
torus bundle structure around the regular fibres. The restriction of the flow to each generic fibre turns out to be linear and hence can be solved by quadrature in principle. 
This serves as a geometric interpretation of solvability of a completely integrable system. Various methods of finding a maximal number of first integrals such as the use of Lax equations (see e.g. \cite{Fomenko,Perelomov,T}) are known in the literature. 

\vspace{1mm}\par 
Our analysis essentially 
extends preliminary results in \cite{RF} on $\mathbb{S}^7$ and it is along the same line of the paper \cite{RT_pseudo_H} which concerns the complete integrability of the geodesic flows on pseudo-$H$-type Lie groups. 
As a novelty, we adapt {\it Thimm's method} in \cite{T} for proving complete integrability for 
a specific collection of Hamiltonian systems to the setting of SR geometries on $\mathbb{S}^7$. 
In each of the above cases we construct an adapted increasing chain
\begin{equation}\label{Introduction_chain_of_subalgebras}
\mathfrak{a}_1 \hookrightarrow \mathfrak{a}_2 \hookrightarrow \cdots \hookrightarrow \mathfrak{a}_6 \hookrightarrow \mathfrak{so}(8)
\end{equation}
of non-degenerate Lie subalgebra of $\mathfrak{so}(8)$ which at some place includes the Lie algebra of the SR isometry group (intersected with $O(8)$). {\it Thimm's method} 
now allows to construct seven first integrals $f_1, \ldots, f_7$ in involution and including the SR Hamiltonian from the chain (\ref{Introduction_chain_of_subalgebras}). 
After the identification of the tangent and cotangent space via the standard Riemannian metric on $\mathbb{S}^7$ and applying the inclusion $T\mathbb{S}^7\subset \mathbb{R}^8 \times \mathbb{R}^8$ it is shown that $f_j$, with $j=1, \ldots, 7$ can be chosen as the restriction of explicitly given homogeneous polynomials on $\mathbb{R}^8 \times \mathbb{R}^8$.
In a second step one verifies the functional independence of the first integrals in all cases proving the complete integrability of the SR structures under 
consideration.

It seems to be a challenging problem to determine the SR isometry groups $\textup{Iso}_{\textup{sub}}(\mathbb{S}^7)$. In particular, these groups are of interest in the 
heat kernel analysis since their action leaves the kernel invariant and therefore allows to reduce the number of coordinates, see \cite{BauWang-1,BauWang, BLT}. 
For all the examples appearing in the present paper, the Lie algebra of the intersection $\textup{Iso}_{\textup{sub}}(\mathbb{S}^7)\cap O(8)$ is described explicitly. 
\vspace{1mm}\par 
Poisson commuting first integrals can also be obtained from the principal symbols of commuting (pseudo)-differential operators. 
In the setting of the present paper, we show a converse statement. 
In each case, a set of seven pairwise commuting second order differential operators is constructed. 
The set of operators acts on $\mathbb{S}^7$ and includes the sublaplacian of the associated SR geometry.  
The set of symbols of these operators coincides with the set of the previously constructed functionally independent first integrals $f_j$, $j=1, \dots, 7$. 
\vspace{1mm} \par 
The structure of the paper is as follows:\\
Section 2 recalls the construction of the SR structures on $\mathbb{S}^7$ which are dealt with in the present paper, i.e. the trivializable SR structures of rank $4, 5, 6$ and the Hopf and the quaternionic Hopf fibrations. 
In Section 3, examples of non-periodic normal geodesics are given in the case of the trivializable SR structures of rank $4$ and $5$. 
The complete integrability of the SR geodesic flow is discussed in Section 4 for each of the above SR structures. 
The key technique is Thimm's method which is based on an explicit choice of non-degenerate subalgebras of the Lie algebra $\mathfrak{so}(8)$. These algebras again play a key role in the descriptions of the infinitesimal SR isometries restricted to the orthogonal group $O(8)$ in Section 5. 
In the end of the paper, commuting differential operators are constructed in association to the sublaplacians for each of the above SR structures and the relation to the SR geodesic flows is explained. 
%%%%%%%%%%%%%%%%%%%%%%%%%%%%%%%%%%%%%%%%%%%%%%%%%%%%%%%%%%%%%%%%%%%%%%%%%
\section{Subriemannian structures on $\mathbb{S}^7$}
\label{section-2}
%%%%%%%%%%%%%%%%%%%%%%%%%%%%%%%%%%%%%%%%%%%%%%%%%%%%%%%%%%%%%%%%%%%%%%%%%
We recall the construction of the four subriemannian (SR) structures defined on the Euclidean sphere $\mathbb{S}^7$ which previously have been considered in  \cite{BauWang-1,BFI,PGreiner}. 
The defining distribution $\mathcal{D}$ is either induced by a set of canonical vector fields or given as the horizontal space in the Hopf or quaternionic Hopf fibration. 
In each case, $\mathcal{D}$ is  bracket generating, i.e. the Lie hull of vector fields taking values in $\mathcal{D}$ evaluated 
at any point $p \in M$ coincides with the full tangent space. As is well-known, the latter property implies {\it global connectivity by horizontal curves}, \cite{Chow,R} as 
well as {\it hypoellipticity} of the induced intrinsic sublaplacian in \cite{AGBR} (a.k.a. hypoelliptic Laplacian). The restriction of the standard Riemannian metric on $\mathbb{S}^7$ to 
$\mathcal{D}$ defines an inner product $g$ on the distributions and for each choice of $\mathcal{D}$ one obtains an SR manifold $(\mathbb{S}^7, \mathcal{D}, g)$.  

Trivializable SR structures on $\mathbb{S}^7$ were introduced in \cite{BFI} and we restate the definition. Let $A_1, \ldots, A_7 \in \mathfrak{so}(8)$ denote a family of skew-symmetric anti-commuting matrices (e.g. see \cite{BL,BLT} for a concrete realization): 
\begin{equation}\label{GL-Matrix-Clifford-relations}
A_iA_j+A_jA_i=- 2 \delta_{ij} \hspace{3ex} \mbox{\it and } \hspace{3ex} A_i^T=-A_i, \hspace{3ex} i,j=1, \ldots, 7. 
\end{equation}
To each of the matrices $A_i=(a_{lk}^{(i)})_{l,k=1}^8$ we assign a linear vector field on $\mathbb{R}^8$ by defining: 
\begin{equation}\label{canonical_vf}
X_{i}= X(A_{i})= \sum_{l,k=1}^8 a_{lk}^{(i)} x_k \frac{\partial}{\partial x_l}.
\end{equation}
Throughout the paper we consider the standard embedding $\mathbb{S}^7 \subset \mathbb{R}^{8}$. A simple calculation shows that the restriction of $X_1, \ldots, X_7$ to 
$\mathbb{S}^7$ are tangent to the sphere and orthogonal at any point $p \in \mathbb{S}^7$. In the following we call them {\it canonical vector fields}, see  \cite{Adams,James}. 
Hence $X_i$ in (\ref{canonical_vf}) define a trivialization of the tangent bundle $T\mathbb{S}^7$.
We define the distributions: 
\begin{equation}\label{notation-D-j}
\mathcal{D}_j:= \textup{span} \Big{\{} X_i \: : \: i=1,\ldots ,j\Big{\}}, \hspace{4ex} j=1,\ldots, 7. 
\end{equation} 
Note that $\mathcal{D}_7= T\mathbb{S}^7$ and in case of $j<7$ we have: 
\begin{thm}[\cite{BFI}]\label{Theorem_bracket_generating_condition_trivializable}
The distribution $\mathcal{D}_j$ is bracket generating if and only if $j \geq 4$. 
\end{thm}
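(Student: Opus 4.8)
The plan is to compute the iterated Lie brackets of the canonical vector fields $X_1, \dots, X_j$ explicitly and to show that for $j \geq 4$ the values at every point $p \in \mathbb{S}^7$ already span $T_p\mathbb{S}^7$ at the first bracket step, while for $j \leq 3$ they span at most a $6$-dimensional subspace (indeed the full Lie hull stays inside a proper subbundle). The starting point is the observation that $[X(A), X(B)] = X([B,A]) = -X([A,B])$ for the linear vector fields associated to matrices: the bracket of two linear vector fields on $\mathbb{R}^8$ corresponds (up to sign) to the matrix commutator, so $[X_i, X_j] = X(C_{ij})$ with $C_{ij} := [A_j, A_i] = A_jA_i - A_iA_j = 2A_jA_i$ using the Clifford relations (\ref{GL-Matrix-Clifford-relations}) for $i \neq j$. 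Hence $[X_i, X_j] = 2 X(A_jA_i)$, and the candidate frame at $p$ consists of the vectors $A_1 p, \dots, A_j p$ together with the products $A_kA_i\, p$ for $1 \le i < k \le j$.

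First I would fix a point, say $p = e_1 = (1,0,\dots,0)^T \in \mathbb{S}^7$; by homogeneity of the construction under the group generated by the $A_i$ (or simply by a direct check, since the argument below is uniform) it suffices to treat one point. Using an explicit realization of the $A_i$ as in \cite{BL,BLT}, one writes down the eight vectors $e_1$, $A_1 e_1, \dots, A_j e_1$, and the bracket vectors $A_kA_i e_1$. The key linear-algebra fact to extract from the Clifford relations is that the operators $A_i$ and the products $A_iA_j$ ($i<j$) act on $\mathbb{R}^8$ in a highly structured way: $\{A_1, \dots, A_7\}$ together with the products $A_iA_j$ generate a representation of the Clifford algebra $\mathrm{C}\ell_7 \cong \mathrm{Mat}_8(\mathbb{R}) \oplus \mathrm{Mat}_8(\mathbb{R})$, and already $A_1, \dots, A_4$ and their pairwise products act transitively enough that $\{A_i e_1 : i \le 4\} \cup \{A_kA_i e_1 : i<k\le 4\}$ spans the orthogonal complement $e_1^\perp = T_{e_1}\mathbb{S}^7$, which is $7$-dimensional. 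Counting: for $j=4$ we have $4$ first-order directions and $\binom{4}{2}=6$ second-order ones, $10$ vectors in a $7$-dimensional space, so one must verify the span is all of it — this is the substantive computation, done once with an explicit matrix model.

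For the converse, i.e. $j \leq 3$ fails, I would show that $\mathcal{D}_3$ (hence also $\mathcal{D}_1, \mathcal{D}_2$) has a Lie hull of rank at most $6$ at each point. The cleanest route: exhibit a nowhere-vanishing $1$-form $\eta$ on $\mathbb{S}^7$ (equivalently a canonical vector field $X_m$ with $m \geq 4$, or a suitable linear combination of $X_4, \dots, X_7$) that annihilates $X_1, X_2, X_3$ and all of their iterated brackets. Since $[X_i, X_j] = 2X(A_jA_i)$ and higher brackets again produce vector fields of the form $X(M)$ with $M$ a word in $A_1, A_2, A_3$, it suffices to check that the subalgebra of $\mathrm{Mat}_8(\mathbb{R})$ generated by $A_1, A_2, A_3$ — which is the image of $\mathrm{C}\ell_3 \cong \mathbb{H} \oplus \mathbb{H}$, spanned by $I, A_1, A_2, A_3, A_1A_2, A_1A_3, A_2A_3, A_1A_2A_3$ — leaves invariant the orthogonal splitting of $\mathbb{R}^8$ into two $4$-dimensional subspaces, so that $\{M p : M \in \langle A_1,A_2,A_3\rangle\}$ together with $p$ lies in a fixed $4$-dimensional subspace through $p$; projecting out $p$ and the radial direction leaves at most a $3$-dimensional horizontal hull, certainly not all of the $7$-dimensional tangent space. (Equivalently, one identifies $\mathcal{D}_3$ with the horizontal distribution of the quaternionic Hopf fibration, whose hull is the $6$-dimensional horizontal-plus-vertical space, as recalled in \cite{BFI,MaMo12}.)

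The main obstacle is the positive direction for $j=4$: one must be careful that the six second-order vectors $A_kA_i p$ are not forced by the Clifford relations to be redundant with the $A_i p$ or with each other. Concretely, $A_kA_i$ is skew-symmetric for $i\neq k$ (since $(A_kA_i)^T = A_iA_k = -A_kA_i$ by anticommutativity), so each $A_kA_i p \perp p$ and lands in $T_p\mathbb{S}^7$ as required; the real content is a rank computation showing that among $\{A_1p,\dots,A_4p, A_2A_1p, A_3A_1p, A_4A_1p, A_3A_2p, A_4A_2p, A_4A_3p\}$ there are seven linearly independent vectors. This I would settle by a single explicit matrix computation in the standard Clifford model, and then invoke the homogeneity/transitivity remark to conclude it holds at every $p \in \mathbb{S}^7$, which completes both directions and hence the theorem.
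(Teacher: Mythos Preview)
The paper does not prove this statement --- it is quoted from \cite{BFI} without argument --- so your proposal must stand on its own.

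Two gaps. For $j\le 3$, your claim that ``$\{Mp : M\in\langle A_1,A_2,A_3\rangle\}$ together with $p$ lies in a fixed $4$-dimensional subspace'' is false when $p$ has nonzero components in both $(\pm 1)$-eigenspaces of $A_1A_2A_3$; the associative algebra preserves the splitting $V_+\oplus V_-$, but it does not confine a generic $p$ to one summand. The correct argument is a straight dimension count on the \emph{Lie} algebra: the commutator closure of $\{A_1,A_2,A_3\}$ is $\mathrm{span}\{A_1,A_2,A_3,A_1A_2,A_1A_3,A_2A_3\}$, which is $6$-dimensional, so the Lie hull of $\mathcal{D}_3$ has rank $\le 6<7$ at every point. (Your parenthetical identifying $\mathcal{D}_3$ with the quaternionic Hopf horizontal distribution is also off: that distribution \emph{is} bracket generating.)

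For $j=4$, the ``homogeneity'' step is not justified as written. The SR isometry group of $\mathbb{S}^7_{\textup{T},4}$ does not act transitively (cf.\ \cite{BL}), and the group generated by all $e^{tA_i}$, $i=1,\dots,7$, does not normalize $\mathfrak{l}_4:=\mathrm{span}\{A_i,A_iA_j:1\le i<j\le 4\}$, so neither lets you transport a one-point rank computation to all of $\mathbb{S}^7$. What does work is to identify $\mathfrak{l}_4\cong\mathfrak{so}(5)\cong\mathfrak{sp}(2)$ acting on $\mathbb{R}^8\cong\mathbb{H}^2$ in its standard representation; then $\mathrm{Sp}(2)$ acts transitively on $\mathbb{S}^7$ with stabilizer $\mathrm{Sp}(1)$, so $\dim\{Mp:M\in\mathfrak{l}_4\}=10-3=7$ at \emph{every} $p$. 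Without this (or an equivalent uniform argument valid for all $p$), a single-point check is insufficient, since bracket generation must hold everywhere, not just generically.
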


Under an additional hypothesis on the distribution which is called {\it strongly bracket generating} or {\it fat} 
more can be said about the SR structure and their induced geodesic curves (cf. \cite{Mo, S1, S2}). In particular, in case of a fat distribution all geodesics are {\it normal}. Hence it is natural to ask whether $\mathcal{D}_j$, $j=4,5,6$ above are strongly bracket generating. First we recall the definition  (see \cite{S1}). 
An element $Y \in \mathcal{D}_q\subset T_q \mathbb{S}^7$ with $q \in \mathbb{S}^7$ is called a {\it 2-step bracket generator} if 
\begin{equation*}
T_q\mathbb{S}^7 = \mathcal{D}_q + \big{[} Y, \mathcal{D}_q\big{]}. 
\end{equation*}
Here $[Y, \mathcal{D}_q]$ denotes the vector subspace of $T_q\mathbb{S}^7$ spanned by $[\widetilde{Y}, X]_q$, where $X$ and $\widetilde{Y}$ are sections of 
$\mathcal{D}$ such that $\widetilde{Y}_q=Y$. If for all $q \in \mathbb{S}^7$ each non-trivial element $0 \ne Y \in \mathcal{D}_q$ is a 2-step bracket generator, then the distribution 
$\mathcal{D}$ is called {\it strongly bracket generating} or {\it fat}.  
\vspace{1ex}\par
First, we observe that in the case $\mathcal{D}= \mathcal{D}_j$, $(j=4,5,6)$ such a property does not depend on the choice of generators $X_{\ell}= X(A_{\ell})$ in 
(\ref{notation-D-j}) since any set of generators in (\ref{GL-Matrix-Clifford-relations}) is element-wise conjugate via a single orthogonal matrix to a fixed collection of such $A_j$. 
Hence we choose $A_1, \ldots, A_7\in \mathfrak{so}(8)$ explicitly as in \cite{BL}.  
In particular, with the identification $\mathbb{R}^8 \cong \mathbb{H} \times \mathbb{H}$ where
$$\mathbb{H}=\{ \alpha_0+ \alpha_1 {\bf i}+ \alpha_2 {\bf j}+ \alpha_3 {\bf k}\: : \: \alpha_j \in \mathbb{R} \}$$ 
denotes the quaternion numbers (${\bf i}^2= {\bf j}^2={\bf k}^2=-1$ and ${\bf ij}={\bf k}$ and {\bf i}, {\bf j}, {\bf k} anti-commute) we put: 
\begin{align}\label{Definition-A-j-for-j-5-7-section-1}
A_3&= 
\left( 
\begin{array}{cc}
0& B_3 \\ 
B_3 & 0
\end{array}
\right), \hspace{2ex} 
A_4= 
\left( 
\begin{array}{cc}
0 & 1 \\ 
-1 & 0
\end{array}
\right), \hspace{2ex} 
A_5:= 
\left( 
\begin{array}{cc}
{\bf i} & 0\\ 
0 & -{\bf i} 
\end{array}
\right), \\
A_6&=
\left( 
\begin{array}{cc}
{\bf j} & 0\\ 
0 & -{\bf j} 
\end{array}
\right), 
 \hspace{2ex} 
A_7=
\left( 
\begin{array}{cc}
{\bf k} & 0\\ 
0 & -{\bf k} 
\end{array}
\right)
\notag
\end{align}
acting on $\mathbb{R}^8 = \mathbb{H} \times \mathbb{H}$ via multiplication from the left. 
Recall that $B_3$ is the real $4 \times 4$ skew symmetric matrix (cf. \cite[Lemma 4.3]{BL}): 
\begin{equation*}
B_3= 
\left(
\begin{array}{cccc}
0 & 1 & 0 & 0\\
-1 & 0 & 0 & 0\\
0 & 0 & 0 & -1 \\
0 & 0 & 1 & 0
\end{array}
\right) \in \mathfrak{so}(4). 
\end{equation*}
The explicit form of $A_1$ and $A_2$ such that (\ref{GL-Matrix-Clifford-relations}) holds can be found in \cite{BL} but is not relevant for the analysis in the present paper. Now, we can prove a refined version of Theorem \ref{Theorem_bracket_generating_condition_trivializable}: 
\begin{prop}
The trivializable distributions $\mathcal{D}_j$, $(j=4,5,6)$ on $\mathbb{S}^7$ are strongly bracket generating if and only if $j=6$. 
\end{prop}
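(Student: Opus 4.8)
The strategy is to reduce the "strongly bracket generating" condition to an explicit linear-algebraic condition on the matrices $A_1, \dots, A_j$ and then test it. Fix $q \in \mathbb{S}^7$ and an element $Y = \sum_{i=1}^j c_i X_i(q) \in \mathcal{D}_{j,q}$, and extend it to the section $\widetilde{Y} = \sum_i c_i X_i$ with constant coefficients $c_i$. Since the $X_i$ are linear vector fields with $X_i = X(A_i)$, one has the bracket identity $[X(A_i), X(A_k)] = X\big([A_k, A_i]\big) = -X\big([A_i,A_k]\big)$, hence $[\widetilde{Y}, X_k]_q = -\sum_{i=1}^j c_i\, X\big([A_i, A_k]\big)(q) = -\sum_i c_i\, [A_i, A_k]\, q$, viewing $q \in \mathbb{R}^8$. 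Therefore $[Y, \mathcal{D}_{j,q}]$ is the span in $T_q\mathbb{S}^7 \subset \mathbb{R}^8$ of the vectors $\{\,[A_Y, A_k]\, q : k = 1, \dots, j\,\}$ where $A_Y := \sum_{i=1}^j c_i A_i$, and fatness of $\mathcal{D}_j$ is equivalent to the statement that for every $q \in \mathbb{S}^7$ and every $0 \neq A_Y$ in the span of $A_1,\dots,A_j$,
\begin{equation*}
\mathbb{R}^8 = \mathbb{R} q \;+\; \operatorname{span}\{A_1 q, \dots, A_j q\} \;+\; \operatorname{span}\{[A_Y, A_k]\,q : k=1,\dots,j\}.
\end{equation*}
(The $\mathbb{R}q$ summand accounts for the normal direction, so that the right-hand side is all of $\mathbb{R}^8$ iff the tangential part is all of $T_q\mathbb{S}^7$.) By the conjugation-invariance remark already made in the text, it suffices to work with the explicit matrices $A_3, \dots, A_7$ from \eqref{Definition-A-j-for-j-5-7-section-1} together with the $A_1, A_2$ of \cite{BL}.

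Next I would dispose of the negative direction, i.e. show $\mathcal{D}_4$ and $\mathcal{D}_5$ are \emph{not} fat, by exhibiting a bad pair $(q, Y)$. A dimension count already suggests failure: for $j = 4$ the candidate spanning set has at most $1 + 4 + 4 = 9$ vectors but many coincidences force the dimension below $8$; for $j=5$ one has at most $1 + 5 + 5 = 11$, again with forced degeneracies. Concretely, for $\mathcal{D}_5$ I expect that choosing $Y$ proportional to one of the $X_i$ (so $A_Y = A_i$) makes several brackets $[A_i, A_k]$ vanish or become dependent --- for instance among $A_5, A_6, A_7$ one has $[A_5, A_6] = 2A_7$ etc. (the block $\operatorname{diag}(\mathbf{i}, -\mathbf{i})$ type matrices close up under bracket into their own $3$-dimensional span), so picking $A_Y = A_5$ and an appropriate $q$ lying in a coordinate subspace kills enough directions that the sum fails to be $8$-dimensional. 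I would pin down one such explicit $q$ (e.g. a standard basis vector of $\mathbb{R}^8 = \mathbb{H}\times\mathbb{H}$) and verify by direct computation that the relevant vectors span a proper subspace; doing this for $\mathcal{D}_5$ simultaneously handles $\mathcal{D}_4 \subset \mathcal{D}_5$, since fatness of a larger distribution at $q$ is necessary for fatness of a sub-distribution only in the reverse direction --- so I must actually produce a separate witness for $\mathcal{D}_4$, or better, choose the $\mathcal{D}_5$-witness $Y$ to lie in $\mathcal{D}_4$ and note that then the same $q$ works for $\mathcal{D}_4$ as well because $\mathcal{D}_{4,q} \subset \mathcal{D}_{5,q}$ and $[Y,\mathcal{D}_{4,q}] \subset [Y,\mathcal{D}_{5,q}]$.

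Finally, the positive direction: $\mathcal{D}_6$ is fat. Here one must show that for \emph{every} $q \in \mathbb{S}^7$ and every nonzero $A_Y \in \operatorname{span}(A_1,\dots,A_6)$ the displayed sum is all of $\mathbb{R}^8$. Since $\operatorname{span}(A_1,\dots,A_6)$ is $6$-dimensional and the missing matrix $A_7$ satisfies the Clifford relations, I would use the structure of the Clifford algebra: the products $A_i A_k$ for $i \neq k$ and the identity recover a large subalgebra, and the commutators $[A_i, A_k] = 2 A_i A_k$ (for $i\neq k$, using anticommutativity) are themselves skew-symmetric. The key point is that the family $\{A_k\,q : k = 1,\dots,6\} \cup \{A_i A_k\, q : 1 \le i < k \le 6\}$ together with $q$ should span $\mathbb{R}^8$ robustly --- indeed $q, A_1 q, \dots, A_7 q$ already span $\mathbb{R}^8$ (the $A_i$ give an orthonormal frame on the sphere by Theorem \ref{Theorem_bracket_generating_condition_trivializable}'s setup), so it suffices to recover the single direction $A_7 q$ from the available brackets. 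One checks that $A_7 = c\, [A_i, A_k]$ (up to sign/scalar) for a suitable pair $i,k \le 6$ with $i,k$ depending on which coefficients of $A_Y$ are nonzero --- and because $A_Y \neq 0$ has \emph{some} nonzero coefficient, there is always a partner index $k \le 6$ with $[A_Y, A_k]$ having a nonzero $A_7$-component after projecting out $\operatorname{span}(A_1,\dots,A_6)$. Translating "$[A_Y, A_k]$ has nonzero $A_7$-component" into "$[A_Y, A_k]\,q \notin \operatorname{span}\{q, A_1 q,\dots,A_6 q\}$" requires knowing that $A_7 q \notin \operatorname{span}\{q, A_1 q, \dots, A_6 q\}$, which is exactly the frame property.

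\textbf{Main obstacle.} The delicate step is the positive direction for $\mathcal{D}_6$: one needs a \emph{uniform} choice, over all unit $q$ and all nonzero $A_Y$ in a $6$-dimensional space of matrices, of a partner index $k$ so that $[A_Y, A_k]\, q$ supplies the one missing direction $A_7 q$. Handling the case where $A_Y$ is a generic linear combination (not just a single $A_i$) is where the Clifford relations must be used carefully: writing $A_Y = \sum_{i=1}^6 c_i A_i$, the commutator $[A_Y, A_k] = \sum_i c_i [A_i, A_k]$ is a linear combination of various $A_i A_k$, and one has to show its projection onto $\mathbb{R}A_7$ (equivalently, the coefficient of the central-type element) is nonzero for at least one $k$ --- this amounts to showing a certain $6\times 6$ (or smaller) matrix built from structure constants is nonsingular, which I expect to reduce to the nondegeneracy of the pairing coming from the Clifford multiplication. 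Everything else is bookkeeping with the explicit matrices in \eqref{Definition-A-j-for-j-5-7-section-1}.
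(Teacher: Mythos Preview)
Your setup and the negative direction ($j=4,5$) are fine and essentially match the paper: choose the north pole $q=(1,0)\in\mathbb{H}\times\mathbb{H}$ and $Y=A_5q$, and check by hand that the span is too small. Your observation that a single witness $Y\in\mathcal{D}_{4,q}$ failing for $\mathcal{D}_5$ automatically fails for $\mathcal{D}_4$ (since $\mathcal{D}_{4,q}+[Y,\mathcal{D}_{4,q}]\subset\mathcal{D}_{5,q}+[Y,\mathcal{D}_{5,q}]$) is correct and is exactly how the paper proceeds.

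The positive direction ($j=6$) has a genuine gap. Several of the algebraic claims you rely on are false: one never has $A_7=c\,[A_i,A_k]$ for $i,k\le 6$, and $[A_5,A_6]=2A_5A_6$, not $2A_7$. By Proposition~\ref{prop_onb}, the matrices $A_\ell$ and $A_iA_k$ ($i<k$) form an \emph{orthogonal} basis of $\mathfrak{so}(8)$, so every commutator $[A_Y,A_k]=2\sum_{i\neq k}c_iA_iA_k$ has \emph{zero} $A_7$-component in $\mathfrak{so}(8)$. Hence the ``$6\times 6$ matrix built from structure constants'' you propose is the zero matrix, and the argument collapses. What you actually need is the $q$-dependent skew matrix $m_{ik}(q)=\langle A_iA_kq,A_7q\rangle$ to be nonsingular for every $q$; this is not a structure-constant computation, and your sketch does not supply a mechanism to prove it.

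The paper's argument avoids this entirely with a two-line trick that you should adopt. Write $C=A_Y$. If $v\in T_q\mathbb{S}^7$ is orthogonal to $\mathcal{D}_{6,q}+[Y,\mathcal{D}_{6,q}]$, then from $0=\langle v,[C,A_\ell]q\rangle$ and the Clifford relations one gets $0=-2\langle Cv,A_\ell q\rangle$ for all $\ell\le 6$; thus \emph{both} $v$ and $Cv$ lie in the one-dimensional space $(\mathcal{D}_{6,q})^\perp\cap T_q\mathbb{S}^7$. But $C$ is skew and invertible (since $C^2=-\|c\|^2 I$), so $v\perp Cv$ and $Cv\neq 0$ whenever $v\neq 0$, forcing a two-dimensional subspace inside a line. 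Hence $v=0$. This replaces your missing nonsingularity argument and works uniformly in $q$ and in $Y$.
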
 
\begin{proof}
According to our remark above it is sufficient to prove the theorem for the distributions $\widetilde{\mathcal{D}}_j:=\mathrm{span}  \{X(A_{8-j}), \ldots X(A_7) \}$ with $j=4,5,6$ instead of $\mathcal{D}_j$. 
\vspace{1mm}\\
{\bf Case} $j=6$: We will see below that $\widetilde{\mathcal{D}}_6$ is a contact distribution and therefore is known to be strongly bracket generating. Here we give another elementary 
proof. 
\vspace{1ex}\par 
Let $q \in \mathbb{S}^7\subset \mathbb{R}^8$ and choose $Y \in (\widetilde{\mathcal{D}}_6)_q$. 
Then $Y=Cq$, which can be identified with $X(C)_q$, where $C\in \mathfrak{so}(8)$ can be decomposed in the form $C=  \sum_{\ell=2}^7 \alpha_{\ell} A_{\ell}$ for some $\alpha_{\ell} \in \mathbb{R}$. 
With our previous notation we choose a section $\widetilde{Y}= X(C)$ of $\widetilde{\mathcal{D}}_6$ such that $\widetilde{Y}_q=Y$.  Note that $[X(C), X(A_{\ell})]=-X([C,A_{\ell}])$ and therefore we can identify: 
\begin{equation*}
\big{(}\widetilde{\mathcal{D}}_6\big{)}_q+\big{[}Y, (\widetilde{\mathcal{D}}_6)_q \big{]}\cong  \textup{span} \big{\{} A_{\ell}q, [C,A_{\ell}]q \: : \: 
\ell=2, \ldots ,7\big{\}}=:W_q \subset \mathbb{R}^8. 
\end{equation*}\par 
Let $v \in T_q\mathbb{S}^7$ be orthogonal to $W_q$ and note that for $\ell=2, \ldots, 7$: 
$$0=\langle v, [C, A_{\ell}]q\rangle =-2\langle Cv, A_{\ell}q\rangle.$$ 
Hence $\textup{span} \{v, Cv\}$ is in the orthogonal complement of $(\widetilde{\mathcal{D}}_6)_q$. Since $v$ and $Cv$ are orthogonal as well, we conclude that $v=0$, since otherwise we would have  constructed a 2-dimensional subspace inside  $\textup{span}\{A_1q\}= (\widetilde{\mathcal{D}}_6)_q^{\perp}$. 
In conclusion, $W_q= T_q\mathbb{S}^7$ for all $q \in \mathbb{S}^7$ and $\widetilde{\mathcal{D}_6}$ is strongly bracket generating. 
\vspace{1ex}\\
{\bf Case} $j=5$: Consider $q:=(1,0) \in \mathbb{H} \times \mathbb{H}$, the {\it north pole} of $\mathbb{S}^7$. We choose a vector 
$0 \ne Y=A_5q \in (\widetilde{\mathcal{D}}_5)_q$ and extend  it to a section $\widetilde{Y}:= X(A_5)$ of $\widetilde{\mathcal{D}}_5$. Then 
\begin{equation*}
\big{(} \widetilde{\mathcal{D}}_5\big{)}_q+ \big{[} Y, (\widetilde{\mathcal{D}}_5)_q \big{]}= \textup{span} \big{\{} A_{\ell}q, A_5A_rq \: : \: \ell=3,4,5,6,7; \: r= 3,4,6,7\big{\}}. 
\end{equation*}
A direct calculation using the explicit matrix representations in (\ref{Definition-A-j-for-j-5-7-section-1}) shows that the latter space is five dimensional and therefore 
cannot coincide with $T_q\mathbb{S}^7$. Hence $\widetilde{D}_4$ cannot be strongly bracket generating. 
\vspace{1ex}\\
{\bf Case} {\it $j=4$}: Consider again $q:=(1,0) \in \mathbb{H} \times \mathbb{H}$ and let $0 \ne Y=A_5q \in (\widetilde{\mathcal{D}}_4)_q$. Then we have: 
\begin{equation*}
\big{(} \widetilde{\mathcal{D}}_4\big{)}_q+ \big{[} Y, (\widetilde{\mathcal{D}}_4)_q \big{]}= \textup{span} \big{\{} A_{\ell}q, A_5A_rq \: : \: \ell=4,5,6,7; \: r= 4,6,7\big{\}}. 
\end{equation*}
A direct calculation using (\ref{Definition-A-j-for-j-5-7-section-1}) shows that this space is five dimensional and therefore a proper subspace of $T_q\mathbb{S}^7$. Hence 
$\widetilde{\mathcal{D}}_4$ cannot be strongly bracket generating. 
\end{proof}

By restricting the standard Riemannian metric of $\mathbb{S}^7$ to $\mathcal{D}_j$ we obtain three SR structures on $\mathbb{S}^7$ (different from the 
standard Riemannian structure) of ranks $j=4,5,6$, namely: 
\begin{equation*}
\mathbb{S}^7_{T,j}:=\big{(} \mathbb{S}^7, \mathcal{D}_j, g \big{)}, \hspace{3ex} j=4,5,6.
\end{equation*}
We call them {\it trivializable} since the distribution $\mathcal{D}_j$ is trivial as a vector bundle. To each of the manifolds $\mathbb{S}^7_{T,j}$ for $j=4,5,6$ we assign a {\it Hamiltonian in SR geometry} $$H_{\textup{sub}, j}: T^* \mathbb{S}^7 \rightarrow \mathbb{R}.$$  
Conversely,   $H_{\textup{sub},j}$ determines the corresponding SR structure on $\mathbb{S}^7$ uniquely (see \cite[Proposition 1.10]{Mo}). To define $H_{\textup{sub},j}$ we consider the symmetric bundle map 
\begin{equation}\label{definition-beta-j}
\beta^{(j)}: T^* \mathbb{S}^7 \rightarrow T\mathbb{S}^7, \hspace{4ex} j=4,5,6
\end{equation}
uniquely defined by the conditions: 
\begin{itemize}
\item[(a)] $\textup{im}(\beta^{(j)}_q)= (\mathcal{D}_j)_q$ where $q \in \mathbb{S}^7$, 
\item[(b)] $p(v)= g(\beta^{(j)}_q(p), v)$ for all $v \in (\mathcal{D}_j)_q$ and $p \in T_q^* \mathbb{S}^7$. 
\end{itemize}
Let $[X_j^*(q) \: : \: j=1, \ldots, 7]$ denote the dual basis to $[X_j(q) \: : \: j=1, \ldots, 7]$. Note that: 
\begin{equation*}
\beta^{(j)}_q(p)= \sum_{i=1}^jp_i X_i(q) \hspace{3ex} \mbox{\it where} \hspace{3ex} p := \sum_{i=1}^7 p_i X_i^*(q) \in T_q^*\mathbb{S}^7. 
\end{equation*}
Then  $H_{\textup{sub},j}$ for $j=4,5,6$ is defined as the fibrewise quadratic function: 
\begin{equation*}
H_{\textup{sub},j}(q,p):=\frac{1}{2}\: g_q\big{(} \beta^{(j)}_q(p), \beta^{(j)}_q(p) \big{)}=\frac{1}{2} \sum_{i=1}^j p_i^2. 
\end{equation*}
\par
In the following we frequently use the identification between $X_i(q)$ and $A_iq$ and the one between $X_i^*(q)$ and $\langle A_iq, \cdot \rangle$ with $i=1,\ldots, 7$, where $q \in \mathbb{S}^7 \subset \mathbb{R}^8$ and $\langle \cdot, \cdot \rangle$ denotes the Euclidean inner product of the ambient space $\mathbb{R}^8$. 
Moreover, we 
consider the cotangent bundle $T^*\mathbb{S}^7$ as a submanifold of $\mathbb{R}^8 \times \mathbb{R}^8$ via: 
\begin{equation*}
T_q^* \mathbb{S}^7 \ni  p=\sum_{i=1}^7 p_i X_i^*(q) \longleftrightarrow \Big{(} q, \sum_{i=1}^7 p_i A_iq \Big{)} \in \mathbb{R}^8 \times \mathbb{R}^8 
\end{equation*}
and we shortly write $p=\sum_{i=1}^7 p_i A_iq$. 
Then $p_i=p(X_i(q))= \langle A_iq, p \rangle$ and the Hamiltonian in SR geometry is expressed as:  
\begin{equation}\label{H_j_sub_definition}
H_{\textup{sub},j}: T^* \mathbb{S}^7 \rightarrow \mathbb{R}:  H_{\textup{sub},j}(q, \xi):=\frac{1}{2} \sum_{i=1}^j \langle A_{i}q, \xi\rangle^2, \hspace{3ex} (j=4,5,6). 
\end{equation}
Henceforth we rather use the notation $(q,\xi)$ instead of $(q,p)$ and think of $(q, \xi)$ as an element in $\mathbb{R}^8 \times \mathbb{R}^8$. 
For $j=4,5,6$ the Hamiltonian $H_{\textup{sub},j}$ naturally extends to a polynomial on $\mathbb{R}^8 \times \mathbb{R}^8$. 
\vspace{1ex}\par 
Other constructions of SR structures on $\mathbb{S}^7$ are based on the Hopf or quaternionic Hopf fibration  and are more common in the literature, \cite{BL, BauWang-1, BauWang, BFI-0, PGreiner, MaMo12,MaMo11, Mo}. Consider the matrices in (\ref{GL-Matrix-Clifford-relations}) and define the compact groups: 
\begin{align*}
G_{\textup{H}}& 
\cong \big{\{} e^{t A_7}= \cos t I+\sin t A_7 \: : \: t \in \mathbb{R} \big{\}}\cong \mathbb{S}^1, \\
G_{\textup{QH}}&
\cong \big{\{} x_0 I + x_1 K+ x_2 A_6+x_3A_7 \: : \: x_j \in \mathbb{R}, \: x_0^2+ \ldots +x_3^2=1 \big{\}} \cong \textup{SU}(2) \cong \mathbb{S}^3, 
\end{align*}
where $K:= A_6A_7$. We may choose the concrete realization of $A_i$ in \cite{BLT,BL}, see (\ref{Definition-A-j-for-j-5-7-section-1}). An easy calculation shows that 
\begin{equation*}
G_{\textup{QH}}= \left\{ 
\left(
\begin{array}{cc}
h & 0 \\
0 & -{\bf i} h {\bf i}
\end{array}
\right)
\: : \: h= x_0+x_1 {\bf i}+ x_2{\bf j}+ x_3 {\bf k}, \; \; x_0^2+ \ldots +x_3^2=1 \right\} \cong \mathbb{S}^3. 
\end{equation*}
Both groups, $G \in \{G_{\textup{H}}, G_{\textup{QH}}\}$  define a free right action on $\mathbb{S}^7$ as follows: 
\begin{equation*}
\mathbb{S}^7 \times G \rightarrow \mathbb{S}^7: (q,g) \mapsto q \cdot g := g^{-1} (q), 
\end{equation*}
where $g^{-1}(q)$ denotes matrix multiplication by $g^{-1}$ on $\mathbb{R}^8$ (leaving $\mathbb{S}^7$ invariant). Consider the distributions $\mathcal{V}_{\textup{H}}$ and 
$\mathcal{V}_{\textup{QH}}$ tangent to the fibres of $\pi: \mathbb{S}^7 \rightarrow \mathbb{S}^7 / G$: 
\begin{equation*}
\mathcal{V}_{\textup{H}}(q)= \textup{span} \{ A_7q \} \hspace{3ex} \mbox{\it and } \hspace{3ex} \mathcal{V}_{\textup{QH}}(q)= \textup{span} \big{\{} A_6q, A_7q, Kq\big{\}}. 
\end{equation*}
 If $H$ is a vector space with inner product and $V\subset H$, then by $V^{\perp}$ we denote the orthogonal complement of $V$ in $H$. We say that the distributions 
\begin{equation*}
\mathcal{D}_{\textup{H}}:= \mathcal{V}_{\textup{H}}^{\perp} \hspace{3ex} \mbox{\it and }\hspace{3ex} \mathcal{D}_{\textup{QH}}:= \mathcal{V}_{\textup{QH}}^{\perp} 
\end{equation*}
are induced by the Hopf and quaternionic Hopf fibrations, respectively. Clearly, with our notation in (\ref{notation-D-j}) we have 
\begin{equation*}
\mathcal{D}_{\textup{H}}= \mathcal{D}_6 = \textup{span} \big{\{} X_1, \ldots, X_6\big{\}}. 
\end{equation*}
Moreover, $\mathcal{D}_{\textup{H}}$ and $\mathcal{D}_{\textup{QH}}$ are bracket generating and therefore define SR manifolds which we denote by 
$\mathbb{S}_{\textup{H}}^7= \mathbb{S}^7_{\textup{T},6}= (\mathbb{S}^7,\mathcal{D}_{\textup{H}}, g)$ and $\mathbb{S}^7_{\textup{QH}}=
(\mathbb{S}^7, \mathcal{D}_{\textup{QH}}, g)$, respectively. On the one hand, the SR manifolds $\mathbb{S}_{\textup{H}}^7$  and $\mathbb{S}^7_{\textup{T},6}$ coincide by 
definition. On the other hand, it is known that the rank four structures $\mathbb{S}^7_{\textup{QH}}$  and $\mathbb{S}^7_{\textup{T},4}$ are not even locally isometric around any 
point as SR manifolds (see \cite{BL}).  It is also know that the distribution $\mathcal{D}_{\textup{QH}}$ is strongly bracket generating (cf. \cite{Mo}). 
\vspace{1mm}\par 
Next, we calculate the Hamiltonian of  $\mathbb{S}^7_{\textup{QH}}$ in SR geometry: 
\begin{equation*}
H_{ \textup{sub},\textup{QH}}: T^*\mathbb{S}^7 \rightarrow \mathbb{R}. 
\end{equation*}
Consider $\beta^{\textup{QH}}: T^* \mathbb{S}^7 \rightarrow T\mathbb{S}^7$ 
 defined analogously to (\ref{definition-beta-j}). One easily checks that 
 \begin{equation*}
 \beta_q^{\textup{QH}}(p)= \sum_{j=1}^5p_jX_j(q)- \sum_{i=1}^5p_i \langle A_iq, Kq\rangle X(K), 
 \end{equation*}
 where $p= \sum_{i=1}^7 p_i X_i^*(q) \in T_q^*\mathbb{S}^7.$ With the notation in (\ref{H_j_sub_definition}) it follows that: 
 \begin{align}\label{Hamiltonian-QH-definition}
 H_{\textup{sub}, \textup{QH}}(q, \xi)
 &=\frac{1}{2}\sum_{j=1}^5\xi_j^2- 
 \frac{1}{2} \Big{\langle} \xi, A_6A_7q \Big{\rangle}^2\\
 &= \frac{1}{2}\|\xi\|^2 - \frac{1}{2} \Big{(} \langle \xi, A_6q\rangle^2+\langle \xi, A_7q\rangle^2+ \langle \xi,Kq\rangle^2\Big{)}. \notag
 \end{align}
 Here $\| \cdot \|$ denotes the Euclidean norm on $\mathbb{R}^8$. 
 
Each of the above SR structures on $\mathbb{S}^7$ are equiregular and induce a (positive) {\it intrinsic sublaplacian} (cf. \cite{AGBR,BR} for the definitions) which, 
based on the bracket generating property of the underlying distribution, is a geometrically defined hypoelliptic second order differential operator, \cite{Hoermander}. 
 In the final section of the paper we relate the Poisson commuting first integrals with a set of commuting second order differential operators on $\mathbb{S}^7$ including the sublaplacian. These sublaplacians are intrinsic in the sense of \cite{AGBR} in all cases except for the structure $\mathbb{S}_{\textup{T},4}^7$, where we omit the first order term. Geometrically this means that in the definition of the operator we replace  Popp's measure on $\mathbb{S}_{\textup{T},4}^7$ by the standard volume of $\mathbb{S}^7$ as a Riemannian manifold (see \cite{BR,BL,BFI} for details). With the obvious notations the list of sublaplacians is as follows: 
 \begin{align}\label{Definition_sub_laplacians}
 \Delta_{\textup{T},j}^{\textup{sub}}&=- \sum_{i=1}^j X(A_i)^2, \hspace{4ex} j=4,5,6\\
 \Delta_{\textup{QH}}^{\textup{sub}}&= \Delta_{\mathbb{S}^7}+ X(A_6)^2+ X(A_7)^2 + X(K)^2, \notag
 \end{align}
where $\Delta_{\mathbb{S}^7}= - \sum_{i=1}^7 X(A_i)^2$ denotes the standard (positive) Laplacian on $\mathbb{S}^7$. From an analytical point of view these operators have been considered 
 in \cite{B,BFI,BLT,BL,PGreiner,MaMo12,Verd}, where  their spectral theory and heat kernels have been discussed.  
 
 We conclude this section with further notations and a proposition which is useful throughout our calculations. 
The Lie algebra $\mathfrak{so}(8)$ is equipped with the bi-invariant\footnote{i.e. invariant under the adjoint action.} bilinear form (inner product) 
\begin{equation}\label{GL-B}
B\left( X, Y \right):=\mathrm{Tr}\left(X^{\mathrm{T}}Y\right)=-\mathrm{Tr}\left(XY\right), \hspace{4ex} X, Y\in\mathfrak{so}(8),
\end{equation}
where, ``$\mathrm{Tr}$''  denotes the matrix trace. For the later use, we state the following property of the matrices $A_i$, $i=1, \cdots, 7$; $A_jA_k$, $1\leq j<k\leq 7$. 
\begin{prop}\label{prop_onb}
The matrices $\dfrac{1}{2\sqrt{2}}A_i$, $i=1, \cdots, 7$; $\dfrac{1}{2\sqrt{2}}A_jA_k$, $1\leq j<k\leq 7$ form an orthonormal basis of $\mathfrak{so}(8)$ with respect to the bi-invariant metric $B\left(\cdot, \cdot\right)$. 
\end{prop}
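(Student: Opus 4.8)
The plan is to prove the two assertions — pairwise $B$-orthogonality and $B$-norm one — using nothing but the Clifford relations (\ref{GL-Matrix-Clifford-relations}); the explicit realisation (\ref{Definition-A-j-for-j-5-7-section-1}) is irrelevant here. First I would collect the elementary facts: taking $i=j$ in (\ref{GL-Matrix-Clifford-relations}) gives $A_i^2=-I$; and for $j\neq k$ the product $A_jA_k$ is again skew-symmetric, since $(A_jA_k)^{\mathrm T}=A_k^{\mathrm T}A_j^{\mathrm T}=A_kA_j=-A_jA_k$, while $(A_jA_k)^2=-A_j^2A_k^2=-I$. Hence all $7+\binom{7}{2}=28$ listed matrices genuinely lie in $\mathfrak{so}(8)$, and this number equals $\dim\mathfrak{so}(8)$. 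For the normalisation, $B(A_i,A_i)=-\mathrm{Tr}(A_i^2)=\mathrm{Tr}(I)=8$ and likewise $B(A_jA_k,A_jA_k)=-\mathrm{Tr}\bigl((A_jA_k)^2\bigr)=8$, so dividing by $2\sqrt2$ produces $B$-unit vectors. Thus the content of the proposition is the vanishing of all off-diagonal inner products, after which a linearly independent set of $28$ unit vectors in the $28$-dimensional space $(\mathfrak{so}(8),B)$ is automatically an orthonormal basis.

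The crux is the following trace lemma: for pairwise distinct indices $i_1,\dots,i_r\in\{1,\dots,7\}$ with $1\le r\le 6$ one has $\mathrm{Tr}(A_{i_1}\cdots A_{i_r})=0$. I would prove it by conjugating the product by a single, well-chosen generator $A_m$ (which is invertible, with $A_m^{-1}=-A_m$). If $r$ is odd, pick $m\notin\{i_1,\dots,i_r\}$, which is possible because $r\le 6<7$; then $A_m$ anti-commutes with every factor, so $A_m(A_{i_1}\cdots A_{i_r})A_m^{-1}=(-1)^r A_{i_1}\cdots A_{i_r}=-A_{i_1}\cdots A_{i_r}$. If $r$ is even, pick $m=i_1$; then conjugation by $A_m$ fixes $A_{i_1}$ and negates the remaining $r-1$ factors, so $A_m(A_{i_1}\cdots A_{i_r})A_m^{-1}=(-1)^{r-1}A_{i_1}\cdots A_{i_r}=-A_{i_1}\cdots A_{i_r}$. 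Since conjugation preserves the trace, the trace equals its own negative and hence vanishes. (For $r=7$ this argument breaks down, consistently with the fact that a product of all seven generators need not be traceless; but products of that length never occur in our list.)

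It then remains to go through the three types of pairs. For $i\neq j$, $B(A_i,A_j)=-\mathrm{Tr}(A_iA_j)$ vanishes by the lemma with $r=2$. For $B(A_i,A_jA_k)=-\mathrm{Tr}(A_iA_jA_k)$ with $j<k$: if $i\notin\{j,k\}$ the lemma with $r=3$ applies; if $i\in\{j,k\}$, one copy of $A_i$ can be cancelled via $A_i^2=-I$, leaving $\pm A_\ell$ for a single index $\ell$, whose trace is $0$ because $A_\ell\in\mathfrak{so}(8)$ is traceless. For $B(A_iA_j,A_kA_l)=-\mathrm{Tr}(A_iA_jA_kA_l)$ with $i<j$, $k<l$, $(i,j)\neq(k,l)$: if $\{i,j\}\cap\{k,l\}=\emptyset$, use the lemma with $r=4$; if the two index pairs share exactly one element, reorder the factors (picking up a sign), bring the two copies of the shared generator together, cancel them by $A^2=-I$, and reduce to the $r=2$ case; the two pairs cannot coincide as sets without coinciding as ordered pairs, so no further case arises. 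This exhausts all off-diagonal inner products.

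Putting the pieces together gives $28$ nonzero, pairwise $B$-orthogonal, $B$-unit elements of the $28$-dimensional Euclidean space $(\mathfrak{so}(8),B)$, hence an orthonormal basis. I expect the only non-routine step to be the trace lemma; the reductions in the last paragraph and the dimension bookkeeping are mechanical once that lemma is in hand.
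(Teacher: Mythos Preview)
Your proof is correct and follows a cleaner route than the paper's. Both arguments compute the norms identically and both reduce orthogonality to the vanishing of $\mathrm{Tr}(A_{i_1}\cdots A_{i_r})$ for short products of distinct generators. The difference lies in how that vanishing is established. The paper first invokes the dimension-specific identity $A_iA_jA_{\ell}A_r=\pm A_pA_qA_s$ (valid because the product of all seven generators is a scalar in this $8\times8$ representation) to collapse the $r=3$ and $r=4$ cases into one, and then argues via the eigenspace decomposition of the symmetric involution $A_iA_jA_{\ell}A_r$: since left multiplication by $A_i$ swaps the $\pm1$-eigenspaces, the trace is zero. Your conjugation lemma does the same job more directly and more uniformly: it needs neither the complementary-product identity nor any eigenspace bookkeeping, and it works verbatim for Clifford systems with any number of generators. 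In effect the paper's eigenspace swap \emph{is} your conjugation identity $A_mXA_m^{-1}=-X$ in disguise, but your packaging isolates the mechanism and handles all $1\le r\le 6$ at once.
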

\begin{proof}
The Clifford relations \eqref{GL-Matrix-Clifford-relations} imply
\begin{align*}
B(A_i,A_i)
&=
-\mathrm{Tr}(A_i^2)=\mathrm{Tr}(\mathsf{E}_8)=8, \\
B(A_jA_k,A_jA_k)
&=
-\mathrm{Tr}(A_jA_kA_jA_k)=\mathrm{Tr}(\mathsf{E}_8)=8, 
\end{align*}
for $1\leq i\leq 7$, $1\leq j<k\leq 7$, where $\mathsf{E}_8$ is the $8\times 8$ identity matrix. 
As $A_i$, $1\leq i\leq 7$; $A_jA_k$, $1\leq i<j\leq 7$, are skew-symmetric, we conclude from \eqref{GL-Matrix-Clifford-relations} that the only other possibly non-trivial inner products between the above matrices are 
\begin{align*}
B(A_i,A_jA_r)
&=-\mathrm{Tr}(A_iA_jA_r), \\
B(A_iA_j,A_{\ell} A_r)&=-\mathrm{Tr}(A_iA_jA_{\ell} A_r), 
\end{align*}
where  $i,j,\ell, r \in \{1, \cdots, 7\}$ are pairwise distinct. 
Note that $A_iA_jA_{\ell}A_r=\pm A_pA_qA_s$ are symmetric for pairwise distinct indices $i,j,\ell,r$ and $p,q,s$ such that 
$$\{i,j,\ell,r,p,q,s\}= \{1, \ldots, 7\}.$$ 
Moreover, $\left(A_iA_jA_{\ell}A_r\right)^2=\mathsf{E}_8$ and hence the eigenvalues of $A_iA_jA_{\ell}A_r$ are $\pm 1$. 
The multiplicity of both eigenvalues are four, since the multiplication by $A_i$ interchanges the corresponding eigenspaces of $A_iA_jA_{\ell}A_r$. 
In fact, for an eigenvector $v\in\mathbb{R}^8$ of $A_iA_jA_{\ell}A_r$ belonging to the eigenvalue $\pm 1$ we have $\left(A_iA_jA_{\ell}A_r\right)A_iv=\mp A_iv$, as $\left(A_iA_jA_{\ell}A_r\right)A_i=-A_i\left(A_iA_jA_{\ell}A_r\right)$ and 
therefore $\mathrm{Tr}\left(A_iA_jA_{\ell}A_r\right)=0$.
\end{proof}
%%%%%%%%%%%%%%%%%%%%%%%%%%%%%%%%%%%%%%%%%%%%%%%%%%%%%%%%%%%%%%%%%%%%%%
\section{Non-periodic normal geodesics}
%%%%%%%%%%%%%%%%%%%%%%%%%%%%%%%%%%%%%%%%%%%%%%%%%%%%%%%%%%%%%%%%%%%%%%
Explicit solutions to the SR geodesic equations on $T^* \mathbb{S}^7$ have been found in the case of the contact and quaternionic contact structures
 $\mathbb{S}_{\textup{H}}$ and $\mathbb{S}_{\textup{QH}}$, respectively. See \cite{HuRo08,MaMo12} for the details. 
Under the canonical  projection $\pi: T^*\mathbb{S}^7 \rightarrow \mathbb{S}^7$ these curves descend to the so-called {\it normal  SR geodesics}. 
Among such curves there are non-period normal geodesics. 
In the present section we construct non-periodic normal geodesic in the case of the rank 4 and 5 trivializable SR structures $\mathbb{S}_{\textup{T},i}$, $i=4,5$. 
\medskip 
{
\par 
First, we express Hamilton's equation induced by $H_{\textup{sub},5}$ in global coordinates on $\mathbb{R}^8 \times \mathbb{R}^8$ 
 and we use the explicit realization of the anti-commuting matrices $A_1,\ldots , A_7 \in \mathfrak{so}(8)$ in 
\cite{BLT,BL}. However, only the concrete forms of $A_5$, $A_6$, $A_7$ in (\ref{Definition-A-j-for-j-5-7-section-1}) are relevant in our calculations. 
\vspace{1ex}\par 
We first consider the geodesic flow of the rank $5$ trivializable SR structure $\mathbb{S}_{\textup{T},5}^7$. 
Recall that Hamilton's equation have the form: 
\begin{equation}\label{HS_j=5}
\begin{cases}
\dot{q} &= \frac{\partial H_{\textup{sub},5}}{\partial \xi}= \sum_{\ell=1}^5 \langle A_{\ell}q, \xi \rangle A_{\ell} q, \\
\dot{\xi}&=- \frac{\partial H_{\textup{sub},5}}{\partial q} = - \sum_{\ell=1}^5 \langle q, A_{\ell} \xi \rangle A_{\ell} \xi. 
\end{cases}
\tag{HS-5}
\end{equation}
In the following we take initial values $(q_0, \xi_0)$ with $\xi_0 \in T_{q_0}\mathbb{S}^7$, i.e. $\langle q_0, \xi_0\rangle =0$. A solution $(q(t), \xi(t))$ to (\ref{HS_j=5}) with these initial data at time $t=0$ fulfills $\langle q(t), \xi(t) \rangle=0$ for all $t \in \mathbb{R}$. We write 
$$q= (q_1,q_2), \hspace{3ex} \xi=(\xi_1, \xi_2) \in \mathbb{R}^4 \times \mathbb{R}^4 
\cong \mathbb{H} \times \mathbb{H}.$$ 
The first component of (\ref{HS_j=5}) can be rewritten as follows: 
\begin{align*}
\dot{q}&=\sum_{\ell=1}^7 \langle A_{\ell}q, \xi \rangle A_{\ell}q
 - \langle A_6q, \xi \rangle A_6q - \langle A_7q, \xi \rangle A_7q \\
\hspace{3ex} &= \xi - \left(\langle {\bf j}q_1, \xi_1\rangle-\langle {\bf j}q_2, \xi_2\rangle\right)\begin{pmatrix}{\bf j}q_1 \\ -{\bf j}q_2\end{pmatrix}
 - \left(\langle {\bf k}q_1, \xi_1\rangle-\langle {\bf k}q_2, \xi_2\rangle\right)\begin{pmatrix}{\bf k}q_1 \\ -{\bf k}q_2\end{pmatrix} \\
&= 
\left( 
\begin{array}{c}
\xi_1 - \left(\langle {\bf j}q_1, \xi_1\rangle-\langle {\bf j}q_2, \xi_2\rangle\right){\bf j}q_1 -  \left(\langle {\bf k}q_1, \xi_1\rangle-\langle {\bf k}q_2, \xi_2\rangle\right) {\bf k}q_1 \\
\xi_2 + \left(\langle {\bf j}q_1, \xi_1\rangle-\langle {\bf j}q_2, \xi_2\rangle\right){\bf j}q_2 +  \left(\langle {\bf k}q_1, \xi_1\rangle-\langle {\bf k}q_2, \xi_2\rangle\right) {\bf k}q_2 
\end{array}
\right). 
\end{align*}
A similar calculation applies to the second component in (\ref{HS_j=5}). 
Note that $\|\xi\|$ is a first integral of the SR geodesic flow as we see in Section 4 and hence we may assume $\|\xi\|=1$. 
Then, we have 
\begin{align*}
\dot{\xi}&=-\sum_{\ell=1}^7 \langle q, A_{\ell}\xi \rangle A_{\ell}\xi
 + \langle q, A_6\xi \rangle A_6\xi + \langle q, A_7\xi \rangle A_7\xi \\
\hspace{3ex} &= -q + \left(\langle q_1,  {\bf j}\xi_1\rangle-\langle q_2,  {\bf j} \xi_2\rangle\right)\begin{pmatrix}{\bf j}\xi_1 \\ -{\bf j}\xi_2\end{pmatrix} 
 + \left(\langle q_1,  {\bf k}\xi_1\rangle-\langle q_2,  {\bf k}\xi_2\rangle\right)\begin{pmatrix}{\bf k}\xi_1 \\ -{\bf k}\xi_2\end{pmatrix} \\
&= 
\left( 
\begin{array}{c}
-q_1 + \left(\langle q_1, {\bf j}\xi_1\rangle-\langle q_2, {\bf j}\xi_2\rangle\right){\bf j}\xi_1 +  \left(\langle q_1, {\bf k}\xi_1\rangle-\langle q_2, {\bf k}\xi_2\rangle\right) {\bf k}\xi_1 \\
-q_2 - \left(\langle q_1, {\bf j}\xi_1\rangle-\langle q_2, {\bf j}\xi_2\rangle\right){\bf j}\xi_2 -  \left(\langle q_1,{\bf k} \xi_1\rangle-\langle q_2, {\bf k} \xi_2\rangle\right) {\bf k}\xi_2
\end{array}
\right). 
\end{align*}
\par 
Now, we present a non-periodic solution to (\ref{HS_j=5}). 
We look for solutions of the form $(q_1,q_2 ,\xi_1,\xi_2)=(q_1,q_2,{\bf k} q_1, {\bf k} q_2)$. 
Then the system of equations reduces to: 
\begin{equation}\label{reduced_eq}
\begin{cases}
\dot{q}_1&=(1- |q_1|^2+|q_2|^2) {\bf k} q_1, \\
\dot{q}_2 &=(1+|q_1|^2-|q_2|^2) {\bf k} q_2.
\end{cases}
\end{equation} 
Under the initial condition $(q_1(0), q_2(0))= (\tilde{q}_1, \tilde{q}_2) \in \mathbb{H} \times \mathbb{H}$ with $|\tilde{q}_1|^2+ |\tilde{q}_2|^2=1$ a solution is given by: 
\begin{align*}
q(t)=
\left( 
\begin{array}{c}
q_1(t)\\
q_2(t)
\end{array} 
\right)= 
\left( 
\begin{array}{c}
e^{(1- |\tilde{q}_1|^2+|\tilde{q}_2|^2)t {\bf k}} \tilde{q}_1 \\
e^{(1+ |\tilde{q}_1|^2-|\tilde{q}_2|^2)t {\bf k} } \tilde{q}_2
\end{array}
\right). 
\end{align*}
Note that $q_1(t)$ and $q_2(t)$ are periodic with (minimal) periods $\lambda_1=\dfrac{2\pi}{1- |\tilde{q}_1|^2+|\tilde{q}_2|^2}$ and $\lambda_2=\dfrac{2\pi}{1+ |\tilde{q}_1|^2-|\tilde{q}_2|^2}$, respectively. 
Choose now initial data $(\tilde{q}_1, \tilde{q}_2)$ such that 
\begin{equation*}
\frac{1- |\tilde{q}_1|^2+|\tilde{q}_2|^2}{1+ |\tilde{q}_1|^2-|\tilde{q}_2|^2} \in \mathbb{R} \setminus \mathbb{Q}
\end{equation*}
is irrational. 
Then, the components $q_1$ and $q_2$ of $q$ do not have a common period and therefore $q=(q_1,q_2)$ is non-periodic. Similarly, Hamilton's equation 
\begin{equation}\label{HS_j=4}
\begin{cases}
\dot{q} &= \frac{\partial H_{\textup{sub},4}}{\partial \xi}= \sum_{\ell=1}^4 \langle A_{\ell}q, \xi \rangle A_{\ell} q, \\
\dot{\xi}&=- \frac{\partial H_{\textup{sub},4}}{\partial q} = - \sum_{\ell=1}^4 \langle q, A_{\ell} \xi \rangle A_{\ell} \xi 
\end{cases}
\tag{HS-4}
\end{equation}
allows a non-periodic solution as follows: \\
In terms of the same coordinates $q=(q_1,q_2)$, $\xi=(\xi_1,\xi_2)$, the first component of \eqref{HS_j=4} can be written as 
\begin{align*}
\dot{q}&=\sum_{\ell=1}^7 \langle A_{\ell}q, \xi \rangle A_{\ell}q
 - \langle A_5q, \xi \rangle A_5q - \langle A_6q, \xi \rangle A_6q - \langle A_7q, \xi \rangle A_7q \\
&= 
\begin{pmatrix}
\xi_1 - \sum_{{\bf l}={\bf i}, {\bf j}, {\bf k}}\left(\langle {\bf l}q_1, \xi_1\rangle-\langle {\bf l}q_2, \xi_2\rangle\right){\bf l}q_1 \\
\xi_2 + \sum_{{\bf l}={\bf i}, {\bf j}, {\bf k}}\left(\langle {\bf l}q_1, \xi_1\rangle-\langle {\bf l}q_2, \xi_2\rangle\right){\bf l}q_2
\end{pmatrix}. 
\end{align*}
The second component of \eqref{HS_j=4} is written as 
\begin{align*}
\dot{\xi}&=-\sum_{\ell=1}^7 \langle q, A_{\ell}\xi \rangle A_{\ell}\xi
 + \langle q, A_5\xi \rangle A_5\xi + \langle q, A_6\xi \rangle A_6\xi + \langle q, A_7\xi \rangle A_7\xi \\
&= 
\begin{pmatrix}
-q_1 +  \sum_{{\bf l}={\bf i}, {\bf j}, {\bf k}}\left(\langle q_1, {\bf l}\xi_1\rangle-\langle q_2, {\bf l}\xi_2\rangle\right){\bf l}\xi_1 \\
-q_2 -  \sum_{{\bf l}={\bf i}, {\bf j}, {\bf k}}\left(\langle q_1, {\bf l}\xi_1\rangle-\langle q_2, {\bf l}\xi_2\rangle\right){\bf l}\xi_2
\end{pmatrix}. 
\end{align*}
\par 
Under the assumption $(q_1, q_2, \xi_1, \xi_2)=(q_1, q_2, {\bf k}q_1, {\bf k}q_2)$, these equations are reduced to the same equation \eqref{reduced_eq} as in the rank $5$ case, which admits a non-periodic solution as mentioned above. 
We summarize the above observation: 
\begin{prop}
The Hamiltonian systems (\ref{HS_j=4}) and (\ref{HS_j=5}) have non-periodic solutions. 
In particular, there are non-periodic SR normal geodesics on $\mathbb{S}_{\textup{T},4}^7$ and $\mathbb{S}_{\textup{T},5}^7$. 
\end{prop}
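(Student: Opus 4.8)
The plan is to exhibit explicit non-periodic solutions of \eqref{HS_j=4} and \eqref{HS_j=5}, which (after projection by $\pi\colon T^*\mathbb{S}^7\to\mathbb{S}^7$) immediately yield non-periodic normal geodesics, since normal geodesics are by definition the projections of solutions to Hamilton's equations for $H_{\textup{sub},j}$. The key simplification is to look for solutions lying on the invariant submanifold defined by the ansatz $\xi={\bf k}q$, i.e. $(q_1,q_2,\xi_1,\xi_2)=(q_1,q_2,{\bf k}q_1,{\bf k}q_2)$, using the block-diagonal form of $A_5,A_6,A_7$ in \eqref{Definition-A-j-for-j-5-7-section-1}.

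First I would rewrite the first component of Hamilton's equation in the quaternionic coordinates $q=(q_1,q_2)$, $\xi=(\xi_1,\xi_2)\in\mathbb{H}\times\mathbb{H}$, by adding and subtracting the missing terms so that $\sum_{\ell=1}^{j}\langle A_\ell q,\xi\rangle A_\ell q = \xi - \sum_{\ell=j+1}^{7}\langle A_\ell q,\xi\rangle A_\ell q$; here one uses that $\sum_{\ell=1}^{7}\langle A_\ell q,\xi\rangle A_\ell q=\xi$ for $\xi\in T_q\mathbb{S}^7$, which follows from Proposition~\ref{prop_onb} (the $A_i$, suitably normalised, being part of an orthonormal basis) together with $\langle q,\xi\rangle=0$. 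The same manipulation applies to $\dot\xi$. Next I would substitute the ansatz $\xi_i={\bf k}q_i$; using ${\bf k}^2=-1$, the anticommutation of ${\bf i},{\bf j},{\bf k}$, and the identities $\langle {\bf l}q_i,{\bf k}q_i\rangle=0$ for ${\bf l}\in\{{\bf i},{\bf j}\}$ while $\langle {\bf k}q_i,{\bf k}q_i\rangle=|q_i|^2$, one checks that both equations \eqref{HS_j=4} and \eqref{HS_j=5} collapse to the single closed system \eqref{reduced_eq}:
\begin{equation*}
\dot q_1=(1-|q_1|^2+|q_2|^2){\bf k}q_1,\qquad \dot q_2=(1+|q_1|^2-|q_2|^2){\bf k}q_2.
\end{equation*}
Since $|q_1(t)|^2+|q_2(t)|^2$ is conserved along \eqref{reduced_eq} (add $\frac{d}{dt}|q_i|^2=0$, as $\langle {\bf k}q_i,q_i\rangle=0$), the coefficients are constant in $t$, so $q_i(t)=e^{\mu_i t{\bf k}}\tilde q_i$ with $\mu_1=1-|\tilde q_1|^2+|\tilde q_2|^2$, $\mu_2=1+|\tilde q_1|^2-|\tilde q_2|^2$; I would verify directly that this solves \eqref{reduced_eq} and satisfies $\langle q,\xi\rangle=0$, $\|\xi\|=1$ when $|\tilde q_1|^2+|\tilde q_2|^2=1$.

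Finally, choosing $\tilde q_1,\tilde q_2$ with $|\tilde q_1|^2+|\tilde q_2|^2=1$ and $\mu_1/\mu_2\notin\mathbb{Q}$ (possible since the ratio varies continuously over the allowed range), the two periodic factors $q_1,q_2$ share no common period, so $t\mapsto q(t)$ is non-periodic; hence the curve is a non-periodic normal geodesic on each of $\mathbb{S}^7_{\textup{T},4}$ and $\mathbb{S}^7_{\textup{T},5}$. The only genuinely delicate point is the bookkeeping that shows the ansatz $\xi={\bf k}q$ is consistent with \emph{both} Hamilton's equations — i.e. that $\dot\xi$ equals ${\bf k}\dot q$ on this submanifold — and that the right-hand side really reduces to \eqref{reduced_eq}; this is a finite computation exploiting the explicit block form of $A_5,A_6,A_7$ and the quaternion relations, and I expect no conceptual obstacle, only the need for care with signs in the second ($\dot\xi$) component.
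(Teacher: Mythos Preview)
Your proposal is correct and follows essentially the same route as the paper: the same ansatz $\xi={\bf k}q$, the same reduction to the decoupled system \eqref{reduced_eq}, the same explicit exponential solution, and the same irrationality argument for the frequency ratio. The only difference is that you flag the consistency check $\dot\xi={\bf k}\dot q$ explicitly, whereas the paper simply asserts that the full system reduces to \eqref{reduced_eq}; this is a matter of presentation rather than substance.
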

%%%%%%%%%%%%%%%%%%%%%%%%%%%%%%%%%%%%%%%%%%%%%%%%%%%%%%%%%%%%%%%%%%%%%%%%
\section{Complete integrability of subriemannian geodesic flow}
\label{Section-4}
%%%%%%%%%%%%%%%%%%%%%%%%%%%%%%%%%%%%%%%%%%%%%%%%%%%%%%%%%%%%%%%%%%%%%%%%
We first recall a method of how to construct families of first integrals on the tangent bundle of a Riemannian homogeneous space introduced by A. Thimm in \cite{T}. 
In the sequel these ideas will be applied to prove the complete integrability of the SR geodesic flow for all previously defined SR structures on $\mathbb{S}^7$. 
\medskip\par 
On a $2n$-dimensional symplectic manifold $\left(N,\omega\right)$, we consider a Hamiltonian system $\left(N,\omega, H\right)$ for a given Hamiltonian 
$H\in \mathcal{C}^{\infty}(N)$. 
Recall that $\left(N,\omega, H\right)$ is called \textit{completely integrable (in the sense of Liouville)} if there exist $n$ Poisson commuting and functionally independent first integrals $F_1, \ldots, F_{n-1}, F_n(=H)\in\mathcal{C}^{\infty}(N)$, including the Hamiltonian $H$. 
An important consequence of the complete integrability of a system is the {\it Liouville-Arnol'd Theorem} which states that in this case the mapping 
$$F=(F_1,\ldots, F_n)^T:N\rightarrow \mathbb{R}^n$$ admits a torus bundle structure around a compact and connected fibre over a regular value of $F$. Furthermore,  the flow can be realized as a linear flow on each torus appearing as a fibre of the bundle, cf.  \cite{arnold_1989}.  
The (normal) geodesic flow on a Riemannian, as well as on an SR manifold $M$, can be formulated as a Hamiltonian system on the cotangent bundle $N=T^{\ast}M$ equipped with the standard symplectic structure.  Recall that  $T^{\ast}M$ can canonically be  identified with the tangent bundle $TM$ when $M$ is equipped with a 
Riemannian metric. Under this condition, one can also consider the geodesic flow on the tangent bundle $TM$. 

\medskip
Although we are mostly concerned with the (co)tangent bundle of a manifold $M$ we continue with the setting of a general symplectic manifold $N$ which, however, admits a Hamiltonian group action by a Lie group $G$. By $\mathfrak{g}$ we denote the corresponding Lie algebra. 

Consider an equivariant momentum mapping $P:N\rightarrow \mathfrak{g}^{\ast}$ respective to the group action. 
For any ${\eta}\in\mathfrak{g}$, we denote the associated vector field on $N$ by ${\eta}_N$, i.e. 
$$\left.\dfrac{\mathsf{d}}{\mathsf{d}t}\right|_{t=0}F\left(e^{-t{\eta}}\cdot x\right)=:{\eta}_N\left[F\right]\left(x\right), 
\hspace{3ex} \forall x\in N, \hspace{3ex} \forall F\in\mathcal{C}^{\infty}(N).$$
Since the group action by $G$ is Hamiltonian, there is a function $F_{\eta}\in\mathcal{C}^{\infty}\left(N\right)$ whose Hamiltonian vector field coincides with 
${\eta}_N$, i.e. $X_{F_{\eta}}={\eta}_N$.  With this notation, we have $P\left(x\right)\left[{\eta}\right]=F_{\eta}\left(x\right)$, $\forall x\in N$, $\forall {\eta}\in\mathfrak{g}$, and 
$$\left\{F_{\eta}, F_{\eta}^{\prime}\right\}=F_{\left[{\eta},{\eta}^{\prime}\right]}, \hspace{3ex} \forall \hspace{1ex} {\eta}, {\eta}^{\prime}\in\mathfrak{g}.$$
{
The latter relation is extended to the $\mathbb{R}$-linear mapping $\mathcal{C}^{\infty}\left(\mathfrak{g}^{\ast}\right)\ni h\mapsto F_h=h\circ P= P^*h\in \mathcal{C}^{\infty}\left(N\right)$ compatible with Poisson structures:
\begin{equation*}
F_{\left\{h_1, h_2\right\}}=\left\{F_{h_1}, F_{h_2}\right\}, \qquad h_1, h_2\in\mathcal{C}^{\infty}\left(\mathfrak{g}^{\ast}\right), 
\end{equation*}
i.e. the momentum mapping $P$ is a Poisson mapping (see \cite{Gui-Stern}). 
}
As a consequence we see that any two Poisson commuting functions $f, g\in\mathcal{C}^{\infty}\left(\mathfrak{g}^{\ast}\right)$ give rise to Poisson commuting functions on $N$, $P^{\ast}f$, $P^{\ast}g$, respectively. 
In particular, if the symplectic manifold $N=T^{\ast}M$, is the cotangent bundle to a manifold $M$, then, for any ${\eta}\in \mathfrak{g}$, the function $F_{\eta}\in \mathcal{C}^{\infty}\left(T^{\ast}M\right)$ can  be taken as 
\begin{equation}\label{Lie_alg_induced_vf}
F_{\eta}\left(\alpha_q\right)=\alpha_q\left(\left(\eta_M\right)_q\right), \hspace{3ex} \forall q\in M, \hspace{2ex} \forall \alpha_q\in T_q^{\ast}M,
\end{equation}
where $\eta_M$ denotes the vector field on $M$ induced by $\eta$ through $\left(\eta_M\right)_q=\left.\dfrac{\mathsf{d}}{\mathsf{d}t}\right|_{t=0}e^{-t\eta}q$ for $q\in M$. 
See e.g. \cite[Chapter 9]{marsden-ratiu}. 
Using the above Poisson property of the momentum mapping, A. Thimm in \cite{T} gives a systematic method to construct Poisson commuting functions on $TM$. 
Assume that the Lie algebra $\mathfrak{g}$ is equipped with a bi-invariant (i.e $\mathrm{ad}$-invariant) non-degenerate bilinear form $B$.  
As usual $\mathfrak{g}$ and its dual $\mathfrak{g}^{\ast}$ are identified through $\mathfrak{g}\ni {
\eta}\mapsto B\left({
\eta}, \cdot\right)\in \mathfrak{g}^{\ast}$. 
Recall that the Lie-Poisson bracket on $C^{\infty}(\mathfrak{g})$ is defined by:  
\begin{equation}\label{Lie-Poisson-bracket}
\left\{F, G\right\}\left({\eta}\right)=B\left({
\eta}, \left[\left(\mathsf{d}F\right)_{\eta},\left(\mathsf{d}G\right)_{\eta}\right]\right), \qquad F, G\in\mathcal{C}^{\infty}\left(\mathfrak{g}\right), \; {\eta}\in\mathfrak{g}. 
\end{equation}
\par Recall that a subalgebra $\mathfrak{a}\subset \mathfrak{g}$ is called {\it non-degenerate} if the restriction $B_{\mathfrak{a}}:=B|_{\mathfrak{a}\times\mathfrak{a}}$ is a non-degenerate 
bilinear form on $\mathfrak{a}$. The next proposition is a fundamental ingredient in Thimm's method: 
\begin{prop}[\cite{T}]\label{porp_thimm}
Let $\mathfrak{a}_1, \mathfrak{a}_2\subset \mathfrak{g}$ be two non-degenerate subalgebras with the orthogonal projections $\pi_i:\mathfrak{g}\rightarrow \mathfrak{a}_i$, $i=1,2$. 
For bi-invariant functions $h_i\in\mathcal{C}^{\infty}\left(\mathfrak{a}_i\right)$ on each subalgebra, we have 
\[
\left\{h_1\circ \pi_1, h_2\circ \pi_2\right\}=0, 
\]
provided that $\left[\mathfrak{a}_1, \mathfrak{a}_2\right]\subset \mathfrak{a}_2$. 
\end{prop}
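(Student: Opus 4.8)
The plan is to reduce the statement to a computation with the Lie–Poisson bracket \eqref{Lie-Poisson-bracket} on $\mathfrak{g}$, using the hypotheses on non-degeneracy and on the bracket condition $[\mathfrak{a}_1,\mathfrak{a}_2]\subset\mathfrak{a}_2$. First I would observe that since $B$ is non-degenerate on each $\mathfrak{a}_i$, we may form the $B$-orthogonal decomposition $\mathfrak{g}=\mathfrak{a}_i\oplus\mathfrak{a}_i^{\perp}$, so the orthogonal projection $\pi_i\colon\mathfrak{g}\to\mathfrak{a}_i$ is well defined and is the $B$-adjoint of the inclusion $\iota_i\colon\mathfrak{a}_i\hookrightarrow\mathfrak{g}$: that is, $B\big(\pi_i(X),Y\big)=B\big(X,\iota_i(Y)\big)$ for all $X\in\mathfrak{g}$, $Y\in\mathfrak{a}_i$. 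Next I would compute the differential of the composed function $h_i\circ\pi_i$ at a point $\eta\in\mathfrak{g}$: by the chain rule and the self-adjointness just noted, $\big(\mathsf{d}(h_i\circ\pi_i)\big)_{\eta}=\pi_i\big((\mathsf{d}h_i)_{\pi_i(\eta)}\big)$, where $(\mathsf{d}h_i)_{\pi_i(\eta)}\in\mathfrak{a}_i$ is the gradient of $h_i$ taken with respect to $B_{\mathfrak{a}_i}$ and then regarded as an element of $\mathfrak{g}$. Here I would use the identification $\mathfrak{g}\cong\mathfrak{g}^{\ast}$ via $B$ to make sense of $\mathsf{d}F$ as a $\mathfrak{g}$-valued object.

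With these two identities in hand, the Poisson bracket becomes
\begin{equation*}
\left\{h_1\circ\pi_1,\,h_2\circ\pi_2\right\}(\eta)
= B\Big(\eta,\ \big[\pi_1(\nabla_1),\,\pi_2(\nabla_2)\big]\Big),
\end{equation*}
where I abbreviate $\nabla_i:=(\mathsf{d}h_i)_{\pi_i(\eta)}\in\mathfrak{a}_i$. Since $\pi_i(\nabla_i)=\nabla_i$ already lies in $\mathfrak{a}_i$, the bracket inside is $[\nabla_1,\nabla_2]$ with $\nabla_1\in\mathfrak{a}_1$ and $\nabla_2\in\mathfrak{a}_2$, hence by the hypothesis $[\mathfrak{a}_1,\mathfrak{a}_2]\subset\mathfrak{a}_2$ we get $[\nabla_1,\nabla_2]\in\mathfrak{a}_2$. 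Then I would rewrite, using $\mathrm{ad}$-invariance of $B$ and the self-adjointness of $\pi_2$,
\begin{equation*}
B\big(\eta,[\nabla_1,\nabla_2]\big)
= B\big(\pi_2(\eta),[\nabla_1,\nabla_2]\big)
= -\,B\big([\nabla_1,\pi_2(\eta)],\nabla_2\big)
= -\,B_{\mathfrak{a}_2}\big([\nabla_1,\pi_2(\eta)]^{\mathfrak{a}_2},\,\nabla_2\big).
\end{equation*}
The point is now to invoke bi-invariance of $h_2$: since $h_2$ is $\mathrm{ad}(\mathfrak{a}_2)$-invariant, its gradient $\nabla_2$ at the point $\pi_2(\eta)$ satisfies $B_{\mathfrak{a}_2}\big([Z,\pi_2(\eta)],\nabla_2\big)=0$ for every $Z\in\mathfrak{a}_2$ — this is the infinitesimal form of invariance, obtained by differentiating $h_2\big(\mathrm{Ad}_{\exp(tZ)}\pi_2(\eta)\big)=h_2(\pi_2(\eta))$ in $t$.

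The one subtlety — and the step I expect to require the most care — is that $\nabla_1=(\mathsf{d}h_1)_{\pi_1(\eta)}$ a priori lies in $\mathfrak{a}_1$, not in $\mathfrak{a}_2$, so to apply the invariance of $h_2$ in the last display I must know that $[\nabla_1,\pi_2(\eta)]$, after projecting to $\mathfrak{a}_2$, equals $[\,\mathrm{pr}_{\mathfrak{a}_2}(\nabla_1)\,$ or something lying in $\mathfrak{a}_2]$ — more precisely, I need $[\nabla_1,\pi_2(\eta)]\in\mathfrak{a}_2$ on the nose, with $\nabla_1$ acting as an admissible $Z$. This is exactly what the hypothesis $[\mathfrak{a}_1,\mathfrak{a}_2]\subset\mathfrak{a}_2$ delivers: $\nabla_1\in\mathfrak{a}_1$ and $\pi_2(\eta)\in\mathfrak{a}_2$ force $[\nabla_1,\pi_2(\eta)]\in\mathfrak{a}_2$, so $Z:=[\nabla_1,\pi_2(\eta)]^{\mathfrak{a}_2}=[\nabla_1,\pi_2(\eta)]$ is a legitimate element of $\mathfrak{a}_2$, but that is not quite the shape $[Z',\pi_2(\eta)]$ needed. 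The clean way around this is to reorganize the chain of equalities so that the invariance of $h_2$ is applied in the form $B_{\mathfrak{a}_2}\big([\nabla_1,\pi_2(\eta)],\nabla_2\big)$ directly, viewing $\mathrm{ad}_{\nabla_1}$ restricted to $\mathfrak{a}_2$ as a derivation preserving $\mathfrak{a}_2$ (again by the hypothesis), under which $h_2$ is invariant because $\nabla_1$ normalizes $\mathfrak{a}_2$ — differentiating $h_2$ along the flow of $\mathrm{ad}_{\nabla_1}$ and using that $h_2$ is $\mathrm{Ad}$-invariant on all of $\mathfrak{a}_2$. Carrying this out carefully, the last inner product vanishes, giving $\{h_1\circ\pi_1,h_2\circ\pi_2\}(\eta)=0$ for all $\eta$, which is the assertion. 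I would also remark that the asymmetry in the hypothesis (only $[\mathfrak{a}_1,\mathfrak{a}_2]\subset\mathfrak{a}_2$ is assumed, not the reverse) is genuinely used and that the roles of $h_1$ and $h_2$ are not interchangeable in the argument, which is why bi-invariance of both functions is needed but the subalgebra condition is one-sided.
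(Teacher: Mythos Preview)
The paper does not supply its own proof of this proposition; it simply refers the reader to \cite[Proposition 4.1]{T}. So there is no ``paper's proof'' to compare against, and your task is really to produce a self-contained argument. Your overall strategy---compute the gradients, use the Lie--Poisson formula, and exploit $[\mathfrak{a}_1,\mathfrak{a}_2]\subset\mathfrak{a}_2$ together with $\mathrm{ad}$-invariance of $B$---is exactly Thimm's, and the computation is correct up to the point where you arrive at $-B\big([\nabla_1,\pi_2(\eta)],\nabla_2\big)$.

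The gap is in how you close the argument. Your proposed fix, namely that $h_2$ is invariant under the flow of $\mathrm{ad}_{\nabla_1}$ on $\mathfrak{a}_2$ ``because $h_2$ is $\mathrm{Ad}$-invariant on all of $\mathfrak{a}_2$'', does not follow: $\mathrm{Ad}$-invariance on $\mathfrak{a}_2$ is invariance under \emph{inner} automorphisms of $\mathfrak{a}_2$, whereas $\mathrm{ad}_{\nabla_1}$ with $\nabla_1\in\mathfrak{a}_1$ is in general only an outer derivation of $\mathfrak{a}_2$. (Think of $\mathfrak{a}_2$ abelian: every function is bi-invariant, yet $\mathrm{ad}_{\nabla_1}$ can act nontrivially.) The correct closure is much simpler. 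Your own infinitesimal invariance identity $B_{\mathfrak{a}_2}\big([Z,\pi_2(\eta)],\nabla_2\big)=0$ for all $Z\in\mathfrak{a}_2$, combined with $\mathrm{ad}$-invariance and non-degeneracy of $B_{\mathfrak{a}_2}$, is equivalent to the pointwise commutation
\[
\big[\pi_2(\eta),\,\nabla_2\big]=0.
\]
Now apply $\mathrm{ad}$-invariance of $B$ once more:
\[
-B\big([\nabla_1,\pi_2(\eta)],\nabla_2\big)
= -B\big(\nabla_1,[\pi_2(\eta),\nabla_2]\big)
= 0,
\]
and the proof is complete. Note that with this organization only the bi-invariance of $h_2$ is used; your closing remark that ``bi-invariance of both functions is needed'' is therefore stronger than necessary.
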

Note that the condition $\left[\mathfrak{a}_1, \mathfrak{a}_2\right]\subset \mathfrak{a}_2$ is satisfied e.g. if $\mathfrak{a}_1\subset \mathfrak{a}_2$ or if $\left[\mathfrak{a}_1,\mathfrak{a}_2\right]=0$. 
For the proof of Proposition \ref{porp_thimm}  see \cite[Proposition 4.1]{T}. 
As a special case, a number of Poisson commuting functions on $\mathfrak{g}^{\ast}$ can be constructed by choosing a chain of non-degenerate subalgebras $\mathfrak{a}_1\subset\mathfrak{a}_2\subset\cdots\subset \mathfrak{a}_k\subset \mathfrak{g}$. In fact, the pull-backs of bi-invariant functions on each of $\mathfrak{a}_i$, $i=1, \ldots, k$, through the respective orthogonal projections $\pi_i:\mathfrak{g}\rightarrow \mathfrak{a}_i$ are Poisson commuting.  
Pull-back along the momentum mapping $P: N\rightarrow \mathfrak{g}^{\ast}\cong  \mathfrak{g}$ leads to a number of Poisson commuting functions on $N$.  
\medskip \par
We apply the above ideas to the various SR-Hamiltonian flows on $\mathbb{S}^7$ introduced in the previous section. In the following we identify
\begin{equation*}
T^*_q \mathbb{S}^7\cong T_q\mathbb{S}^7\cong \Big{\{} (q, \xi) \in \mathbb{R}^8 \times \mathbb{R}^8 \: | \: \langle q,\xi \rangle:= q^T \cdot \xi =0 \Big{\}}. 
\end{equation*}
Then the momentum mapping {$P: T^*\mathbb{S}^7 \cong T\mathbb{S}^7 \rightarrow \mathfrak{so}(8)^*$ 
has the form: 
\begin{equation}\label{moment_map}
P(q, \xi)(\eta)= {F}_{\eta}(q, \xi)=\langle \eta q, \xi \rangle, \hspace{3ex} \eta \in \mathfrak{so}(8). 
\end{equation}
The bi-invariant inner product $B$ in (\ref{GL-B}) induces an isomorphism $\mathfrak{so}(8)^*\cong  \mathfrak{so}(8)$ 
which leads to the identification: 
\begin{equation}\label{GL-identification-P}
P(q, \xi)= q \wedge \xi := \frac{1}{2} (\xi q^T-q \xi^T). 
\end{equation}
In fact, if $\eta \in \mathfrak{so}(8)$, then: 
\begin{multline}\label{eq_**}
B\left(\eta, q\wedge \xi\right)
=
-\mathrm{Tr}\left(\eta\left(q\wedge \xi\right)\right)\\
=
-\dfrac{1}{2}\mathrm{Tr}\left(\eta\left(\xi q^{\mathrm{T}}-q\xi^{\mathrm{T}}\right)\right)
=
\mathrm{Tr}\left(\xi^{\mathrm{T}}\left(\eta q\right)\right)
=
\langle \eta q, \xi \rangle=P(q, \xi)(\eta).
\end{multline}
As in \cite{T} we consider {$C^{\infty}(\mathfrak{so}(8))$ with the Lie-Poisson bracket} (\ref{Lie-Poisson-bracket}). Let $\mathcal{S} \subset C^{\infty}(\mathfrak{so}(8))$ be a 
set of Poisson commuting functions and consider the {Poisson homomorphism}: 
\begin{equation*}
{P^*: C^{\infty}(\mathfrak{so}(8)) \rightarrow C^{\infty}(T\mathbb{S}^7): h \mapsto h \circ P. }
\end{equation*}
Then $P^*(\mathcal{S})$ defines a set of Poisson commuting functions in $C^{\infty}(T\mathbb{S}^7)$. 
%%%%%%%%%%%%%%%%%%%%%%%%%%%%%%%%%%%%%%%%%%%%%%%%%%%%%
\subsection{Complete integrability of SR geodesic flow of $\mathbb{S}^7_{\textup{T},5}$ and $\mathbb{S}^7_{\textup{QH}}$}
%%%%%%%%%%%%%%%%%%%%%%%%%%%%%%%%%%%%%%%%%%%%%%%%%%%%%
We aim to prove the complete integrability of the SR geodesic flow on the SR manifolds $\mathbb{S}^7_{\textup{T},5}$ and 
$\mathbb{S}^7_{\textup{QH}}$. 
Consider the following chain of strictly increasing non-degenerate subalgebras: 
\begin{equation}\label{chain_subalgebras_1}
\mathfrak{g}_1\subset \mathfrak{g}_2\subset \mathfrak{g}_3 \subset \mathfrak{g}_4 \subset \mathfrak{g}_5 \subset \mathfrak{g}_6 \subset \mathfrak{g}_7:=\mathfrak{so}(8),  
\end{equation}
where $\mathfrak{g}_i$ are defined as 
\begin{align*}
\mathfrak{g}_{\ell}
&=
\mathrm{span}_{\mathbb{R}}\left\{A_iA_j\mid 1\leq i<j\leq \ell+1\right\}, 
\quad \text{if}\; 1 \leq \ell \leq  4; \\
\mathfrak{g}_5
&=
\mathrm{span}_{\mathbb{R}}\left\{A_iA_j\mid 1\leq i<j\leq 5\, \text{or}\, (i,j)=(6,7)\right\}, \\
\mathfrak{g}_6
&=
\mathrm{span}_{\mathbb{R}}\left(\left\{A_iA_j\mid 1\leq i<j\leq 5\, \text{or}\, (i,j)=(6,7)\right\}\cup\left\{A_6,A_7\right\}\right). 
\end{align*}
In fact, a straightforward computation based on the Clifford relations \eqref{GL-Matrix-Clifford-relations} shows that $\mathfrak{g}_{\ell}$ are closed under the Lie bracket. 
Note that 
\[
\mathfrak{g}_7
=
\mathfrak{so}(8)
=
\mathrm{span}_{\mathbb{R}}\left(\left\{A_iA_j\mid 1\leq i <j\leq 7\right\}\cup\left\{A_k\mid 1\leq k\leq 7\right\}\right) 
\]
and that $\mathfrak{g}_{\ell}\cong \mathfrak{so}(\ell+1)$, $\ell =1,2,3,4$, $\mathfrak{g}_5\cong \mathfrak{so}(5)\oplus \mathfrak{so}(2)$, and $\mathfrak{g}_6\cong \mathfrak{so}(5)\oplus \mathfrak{so}(3)$. 
From the orthogonality of their generators 
(see Proposition \ref{prop_onb}) it follows that all algebras $\mathfrak{g}_i$ are non-degenerate in the sense of \cite{T}, i.e. the restriction 
$B_{\mathfrak{g}_i}$ of $B$ is non-degenerate on $\mathfrak{g}_i$ for $i=1, \ldots, 7$. 
}
On $\mathfrak{g}_i$ 
we consider the bi-invariant functions  
\[
f_i(X_i):=\dfrac{1}{2}B_{\mathfrak{g}_i}\left( X_i, X_i\right), \qquad X_i\in\mathfrak{g}_i. 
\]
With the orthogonal projections $\pi_i:\mathfrak{so}(8)\rightarrow \mathfrak{g}_i$ we obtain 
a set of Poisson commuting functions $f_i\circ \pi_i$, $i=1,\ldots, 7$ on $\mathfrak{so}(8)$ by Proposition \ref{porp_thimm}. 
\begin{prop}\label{prop-functional-independent-fi-pi}
The functions $\{ f_i\circ \pi_i\circ P \: :\: i=1, \ldots, 7\}$ on $T\mathbb{S}^7$ are Poisson commuting and functionally independent. 
\end{prop}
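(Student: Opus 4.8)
The plan is to treat the two assertions separately; the involution property is essentially immediate from the apparatus already in place, while the functional independence is where the work lies. For involution, note that since the chain \eqref{chain_subalgebras_1} is increasing we have $\left[\mathfrak{g}_i,\mathfrak{g}_j\right]\subseteq\mathfrak{g}_j$ whenever $i\le j$; hence Proposition \ref{porp_thimm}, applied to the bi-invariant quadratic functions $f_i$ on $\mathfrak{g}_i$ and $f_j$ on $\mathfrak{g}_j$, yields $\left\{f_i\circ\pi_i,\,f_j\circ\pi_j\right\}=0$ on $\mathfrak{so}(8)$ for the Lie--Poisson bracket \eqref{Lie-Poisson-bracket}. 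Since the moment map $P$ of \eqref{moment_map}--\eqref{GL-identification-P} is a Poisson map, pulling back gives $\left\{f_i\circ\pi_i\circ P,\,f_j\circ\pi_j\circ P\right\}=0$ on $T\mathbb{S}^7$ for all $i,j$.

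For functional independence I would first make the seven functions completely explicit. Using the identity $B\left(P(q,\xi),\eta\right)=\left\langle\eta q,\xi\right\rangle$ established above together with the orthonormal bases of the $\mathfrak{g}_i$ supplied by Proposition \ref{prop_onb}, one writes down the orthogonal projections $\pi_i$ and obtains, for $(q,\xi)\in T\mathbb{S}^7$,
\[
f_i\circ\pi_i\circ P(q,\xi)=\frac{1}{16}\sum_{(k,l)\in S_i}\left\langle A_kA_l\,q,\xi\right\rangle^2+\frac{1}{16}\sum_{m\in S_i'}\left\langle A_m q,\xi\right\rangle^2,
\]
where $S_i$ and $S_i'$ denote the sets of index pairs, respectively single indices, defining $\mathfrak{g}_i$; thus $S_1\subset S_2\subset\cdots\subset S_7$, one has $S_i'=\emptyset$ for $i\le 5$, and on $T\mathbb{S}^7$ the top function reduces to $f_7\circ\pi_7\circ P=\tfrac{1}{4}\|\xi\|^2$, proportional to the Riemannian geodesic Hamiltonian. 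Passing to the successive differences $f_i\circ\pi_i\circ P-f_{i-1}\circ\pi_{i-1}\circ P$ (with $f_0:=0$), the linear span of the differentials $d\left(f_i\circ\pi_i\circ P\right)$, $i=1,\dots,7$, coincides with that of the differentials of these increments, each of which is a sum of squares of the linear forms $\left\langle A_kA_lq,\xi\right\rangle$ or $\left\langle A_mq,\xi\right\rangle$ over a set of indices disjoint from those of the others.

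The final step is to exhibit one point of $T\mathbb{S}^7$ at which the seven differentials are linearly independent; since the functions are polynomial and $T\mathbb{S}^7$ is connected, the locus where they are dependent is then a proper real-analytic subset, hence nowhere dense and of measure zero, so independence holds on a dense open set of full measure. To produce such a point I would restrict attention to a single fibre $T_q\mathbb{S}^7$, identified with $q^{\perp}\subset\mathbb{R}^8$ via the orthonormal frame $A_1q,\dots,A_7q$, and retain only the fibre-directional derivatives of the $f_i\circ\pi_i\circ P$. Since $A_kA_lq$ and $A_mq$ already lie in $q^{\perp}$, these are, up to a common scalar, the vectors
\[
v_i:=\sum_{(k,l)\in S_i}\left\langle A_kA_lq,\xi\right\rangle A_kA_lq+\sum_{m\in S_i'}\left\langle A_mq,\xi\right\rangle A_mq\ \in\ q^{\perp},\qquad i=1,\dots,7,
\]
with $v_7=4\xi$; and if $v_1,\dots,v_7$ are linearly independent in the seven-dimensional space $q^{\perp}$ at one $(q,\xi)$, then so are $d\left(f_1\circ\pi_1\circ P\right),\dots,d\left(f_7\circ\pi_7\circ P\right)$ there. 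Since $\det(v_1,\dots,v_7)$, computed in the frame $A_1q,\dots,A_7q$, is a polynomial in $(q,\xi)$ on $T\mathbb{S}^7$, it remains only to check that it does not vanish identically, which one does by evaluating at a convenient point using the concrete realisation \eqref{Definition-A-j-for-j-5-7-section-1} of $A_1,\dots,A_7$ — for instance $q$ away from the poles and $\xi$ with the values $\left\langle A_mq,\xi\right\rangle$ pairwise distinct and nonzero.

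The main obstacle I anticipate is exactly this non-vanishing check. It is a finite linear-algebra computation, but not one settled by dimension counting: the $28$ vectors $A_mq$, $A_kA_lq$ satisfy many linear relations inside the seven-dimensional space $q^{\perp}$, so the independence of $v_1,\dots,v_7$ genuinely relies on the particular architecture of the chain \eqref{chain_subalgebras_1} — the sizes $1,2,3,4,1$ of the first five shells $S_i\setminus S_{i-1}$, the two extra singletons $A_6,A_7$ entering at $\mathfrak{g}_6$, and the explicit form of $A_5,A_6,A_7$ reflected in the splittings $\mathfrak{g}_5\cong\mathfrak{so}(5)\oplus\mathfrak{so}(2)$ and $\mathfrak{g}_6\cong\mathfrak{so}(5)\oplus\mathfrak{so}(3)$. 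Should this fibrewise determinant happen to vanish identically — which I do not expect, as $v_7=4\xi$ is the radial field while the remaining $v_i$ are assembled from the angular forms — one would instead verify full rank of the complete Jacobian of $(f_1\circ\pi_1\circ P,\dots,f_7\circ\pi_7\circ P)$ at a chosen point of $T\mathbb{S}^7$.
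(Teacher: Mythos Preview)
Your treatment of Poisson commutativity matches the paper's. For functional independence your outline is sound in principle—if the seven fibre gradients $v_i\in q^{\perp}$ are linearly independent at one point, the full differentials certainly are there, and analyticity propagates this to a dense open set—but you stop short of the determinant verification you yourself flag as the main obstacle, so the argument is not complete as written.

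The paper takes a cleaner route that avoids this obstacle altogether. The key observation you are missing is that, because each $f_i=\tfrac12 B_{\mathfrak g_i}(\cdot,\cdot)$ is quadratic and $\pi_i$ is an orthogonal projection, one has $\nabla(f_i\circ\pi_i)(X)=\pi_i(X)$ for every $X\in\mathfrak{so}(8)$. Hence at $X=q\wedge\xi$ the seven gradients are precisely the nested projections $\pi_1(q\wedge\xi),\dots,\pi_7(q\wedge\xi)$, and an easy induction shows these are linearly independent provided that, for each $i$, some $A\in\mathfrak g_i\cap\mathfrak g_{i-1}^{\perp}$ satisfies $B(A,q\wedge\xi)=\langle Aq,\xi\rangle\ne 0$. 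Since every shell $\mathfrak g_i\cap\mathfrak g_{i-1}^{\perp}$ in the chain \eqref{chain_subalgebras_1} contains an element of the form $A_m$ or $A_jA_k$, and each $\langle A_mq,\xi\rangle$, $\langle A_jA_kq,\xi\rangle$ is a nonzero linear form on $T_q\mathbb S^7$, the condition holds for almost every $\xi$ without any $7\times 7$ determinant and without invoking the explicit realisation \eqref{Definition-A-j-for-j-5-7-section-1}. Your computational plan would presumably succeed once executed, but the identity $\nabla(f_i\circ\pi_i)=\pi_i$ is exactly what converts your ``main obstacle'' into a one-line structural check, and it also makes the argument transparently reusable for the other chains $\mathfrak h_\ell$, $\mathfrak k_\ell$ later in the paper.
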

\begin{proof}
 According to our construction the compositions $ f_i\circ \pi_i\circ P$ for $i=1,\ldots, 7$ are Poisson commuting and it suffices to prove functionally independence. 
It is convenient to identify the tangent space $T_q\mathbb{S}^7$ at $q\in \mathbb{S}^7$ with the linear subspace 
\[
q\mathfrak{m}:=\left\{\xi q^{\mathrm{T}}-q\xi^{\mathrm{T}}\mid \xi\in\mathbb{R}^8\right\}\subset \mathfrak{so}(8). 
\]
This identification is obtained when we realize the sphere $\mathbb{S}^7$ as the homogeneous space $SO(8)/SO(7)$, where $SO(7)$ is regarded as the subgroup $\{1\}\times SO(7)$.  
By (\ref{GL-identification-P}) we have:
\[
f\circ P(q,\xi)=f\left(q\wedge \xi\right), \qquad (q,\xi)\in T_q\mathbb{S}^7, f\in C^{\infty}(\mathfrak{so}(8)). 
\]
For $X\in\mathfrak{so}(8)$, we have 
\[
\nabla \left(f_i\circ \pi_i\right)(X)
=
\nabla_{\mathfrak{g}_i}f_i \left(\pi_i(X)\right)
=
\pi_i(X)
\]
as $\pi_i$ is linear and $\nabla_{\mathfrak{g}_i}f_i(X_i)=X_i$, $\forall X_i\in\mathfrak{g}_i$. 
Here, $\nabla f$ is the gradient of $f\in\mathcal{C}^{\infty}\left(\mathfrak{so}(8)\right)$ with respect to the metric $B$ and $\nabla_{\mathfrak{g}_i} h$ is the gradient of $h\in\mathcal{C}^{\infty}\left(\mathfrak{g}_i\right)$ with respect to the metric $B_{\mathfrak{g}_i}$. 
Thus, for $q\wedge \xi\in q\mathfrak{m}$, $\nabla \left(f_i\circ \pi_i\right)\left(q\wedge \xi\right)$, $i=1, \ldots, 7$, are linearly independent if and only if $\pi_i\left(q\wedge \xi\right)$, $i=1, \ldots, 7$, are linearly independent. This condition is satisfied if for any $i=1, \ldots, 7$, 
\begin{equation}\label{eq_*}
\exists A\in \mathfrak{g}_{i-1}^{\perp_{B_{\mathfrak{g}_i}}}\; \text{\it such that}\; B\left( A, q\wedge \xi\right)\neq 0,
\end{equation}
under the convention $\mathfrak{g}_0=0$. 
Here $\mathfrak{g}_{i-1}^{\perp_{B_{\mathfrak{g}_i}}}$ denotes the orthogonal complement to $\mathfrak{g}_{i-1}$ in $\mathfrak{g}_i$ with respect to the inner product $B_{\mathfrak{g}_i}$. 
The proof that \eqref{eq_*} actually guarantees the linear independence of $\pi_i(q\wedge \xi)$, $i=1,\cdots, 7$, can be explained as follows: \\
Suppose that $$ \sum_{i=1}^7\alpha_i\pi_i(q\wedge \xi)=0.$$ By \eqref{eq_*}, 
there exists $Y_7\in \mathfrak{g}_{6}^{\perp_{B_{\mathfrak{g}_7}}}$ such that $B\left(Y_7, q\wedge \xi\right)\neq 0$. 
Then, we have 
\[
0
=
B\left(Y_7, \sum_{i=1}^7\alpha_i\pi_i(q\wedge \xi)\right)
=
\alpha_7 B\left(Y_7, \pi_7(q\wedge \xi)\right)
=
\alpha_7 B\left(Y_7, q\wedge \xi\right). 
\]
This means that $\alpha_7=0$ and $\displaystyle \sum_{i=1}^6\alpha_i\pi_i(q\wedge \xi)=0$. 
Inductively, $\alpha_1=\cdots=\alpha_7=0$ and hence $\pi_i(q\wedge \xi)$, $i=1,\cdots, 7$, are linearly independent. 

Note that \eqref{eq_*} is satisfied if there exists $A_i$ or $A_jA_k$ in $\mathfrak{g}_{i-1}^{\perp_{B_{\mathfrak{g}_i}}}$. 
In fact, if $A$ is either  $A_i$ or $A_jA_k$, then $Aq$ is a unit tangent vector in $T_q\mathbb{S}^7$ and hence by (\ref{eq_**}):
 $$B\left(A, q\wedge \xi\right)=\xi^{\mathrm{T}}\left(Aq\right)\neq 0$$ for almost all $\xi\in T_q\mathbb{S}^7$. 
For the above chain of non-degenerate subalgebras $\mathfrak{g}_i$, we can choose $A\in \mathfrak{g}_{i-1}^{\perp_{B_{\mathfrak{g}_i}}}$ in the form 
$A_i$ or $A_jA_k$ satisfying \eqref{eq_*} for each $i=1, \cdots ,7$. 
\end{proof}
Now, we show that the SR Hamiltonian $H_{\textup{sub},5}$ is in fact expressed as a linear combination of $f_ i\circ \pi_i\circ P$, $i=1, \cdots, 7$. 
{
As $A_iq$, $i=1, \cdots, 7$, form an orthonormal basis of $T_q\mathbb{S}^7$, we have for any $\xi\in T_q\mathbb{S}^7$ 
\begin{equation}\label{eq_!!}
\xi =\sum_{i=1}^7\left\langle A_iq, \xi\right\rangle A_iq  \hspace{3ex} \mbox{\it and} \hspace{3ex} \left\|\xi\right\|^2
=
\left\langle\xi, \xi\right\rangle
=
\displaystyle \sum_{i=1}^7\left\langle A_iq, \xi \right\rangle^2. 
\end{equation}

By \eqref{eq_**}, we have 
\begin{align}\label{eq_***}
2f_7\left(q\wedge \xi\right)
&=B(q \wedge \xi, q \wedge \xi)=
\sum_{1\leq i<j \leq 7}B( A_iA_j, q\wedge \xi)^2+\sum_{k=1}^7B( A_k, q\wedge \xi)^2 \notag \\
&=
\sum_{1\leq i<j \leq 7}\left\langle A_iA_jq, \xi\right\rangle^2+\sum_{k=1}^7\left\langle A_kq, \xi\right\rangle^2. 
\end{align}
To compute the first term, we consider 
\begin{align*}
\sum_{i,j=1, \cdots, 7}\left\langle A_iA_jq, \xi\right\rangle^2
&=
2\sum_{1\leq i<j \leq 7}\left\langle A_iA_jq, \xi\right\rangle^2+\sum_{k=1}^7\left\langle A_k^2q, \xi\right\rangle^2 \\
&=
2\sum_{1\leq i<j \leq 7}\left\langle A_iA_jq, \xi\right\rangle^2-7\underbrace{\left\langle q, \xi\right\rangle^2}_{=0}
=
2\sum_{1\leq i<j \leq 7}\left\langle A_iA_jq, \xi\right\rangle^2. 
\end{align*}
On the other hand, we have 
\begin{align*}
\sum_{i,j=1, \cdots, 7}\left\langle A_iA_jq, \xi\right\rangle^2
&=
\sum_{i,j=1, \cdots, 7}\left\langle A_jq, A_i \xi\right\rangle^2 
=
\sum_{i=1}^7\left\langle \sum_{j=1}^7\left\langle A_jq, A_i\xi\right\rangle A_jq, A_i\xi \right\rangle \\
&=
\sum_{i=1}^7\left\langle A_i\xi-\left\langle q, A_i\xi\right\rangle q, A_i\xi \right\rangle 
=
-\sum_{i=1}^7\left\langle A_i^2\xi, \xi\right\rangle -\sum_{i=1}^7\left\langle q, A_i\xi\right\rangle^2 \\
&\stackrel{\eqref{eq_!!}}{=}
7\left\|\xi\right\|^2-\left\|\xi\right\|^2
=
6\left\|\xi\right\|^2. 
\end{align*}
Therefore, $\displaystyle \sum_{1\leq i<j \leq 7}\left\langle A_iA_jq, \xi\right\rangle^2=3\left\|\xi\right\|^2$ and  \eqref{eq_***} is rewritten as 
\[
f_7\circ P\left(q, \xi\right)=f_7\left(q\wedge \xi\right)
=\frac{1}{2} \left(
3\left\|\xi\right\|^2+\left\|\xi\right\|^2\right)
=
2\left\|\xi\right\|^2.
\]
Since 
$2f_6 \circ \pi_6 \left(q\wedge \xi\right)-2f_5\circ \pi_5 \left(q\wedge \xi\right)=\left\langle A_6q, \xi\right\rangle^2+\left\langle A_7q, \xi\right\rangle^2$ we have: 
\begin{align*}
H_{\textup{sub},5}(q,\xi)
&=
\dfrac{1}{2}\sum_{k=1}^5\left\langle A_kq, \xi\right\rangle^2
=
\dfrac{1}{2}\sum_{k=1}^7\left\langle A_kq, \xi\right\rangle^2
-
\dfrac{1}{2}\left(\left\langle A_6q, \xi\right\rangle^2
+
\left\langle A_7q, \xi\right\rangle^2 \right)
\\
&=
\dfrac{1}{2}\left\|\xi\right\|^2
-
\dfrac{1}{2}\left(\left\langle A_6q, \xi\right\rangle^2
+
\left\langle A_7q, \xi\right\rangle^2\right)\\
&=
\dfrac{1}{4}f_7(q\wedge\xi)-f_6\circ \pi_6 (q\wedge \xi)+f_5\circ \pi_5(q\wedge \xi). 
\end{align*}
Hence, we have proved the decomposition of the SR Hamiltonian: 
\begin{equation}\label{eq_!!!}
H_{\textup{sub},5}=\dfrac{1}{4}f_7\circ P-f_6\circ \pi_6\circ P+f_5\circ \pi_5\circ P. 
\end{equation}
\begin{rem}
By taking suitable combinations of the functions $f_i\circ \pi_i \circ P$ in Proposition \ref{prop-functional-independent-fi-pi} we obtain the following 
list of functionally independent first integrals on $T\mathbb{S}^7$ 
which compared to  $f_i\circ \pi_i \circ P$ have less terms: 
\begin{align*}
F_{\ell}(q, \xi) &=f_{\ell+1} \circ \pi_{\ell+1} \circ P - f_{\ell} \circ \pi_{\ell} \circ P =
\sum_{i=1}^{\ell} \big{\langle} A_iA_{\ell+1}q, \xi \big{\rangle}^2, \hspace{4ex}\ell=1, \ldots, 4;\\
F_5(q, \xi)
&=\big{\langle} A_6A_7q, \xi \big{\rangle},\\
F_6(q,\xi)
&=\big{\langle} A_6q, \xi\big{\rangle}^2+ \big{\langle} A_7q, \xi\big{\rangle}^2,\\
F_7(q,\xi)&=H_{\textup{sub},5}(q, \xi). 
\end{align*}
Note that the first integral $F_7(q, \xi)$ above can be replaced by the $q$-independent function $\tilde{F}_7(q, \xi)= \|\xi\|^2= 2 H_{\textup{sub},5}(q, \xi)+ F_6(q, \xi)$. 
\end{rem}
The above sequence of subalgebras $\mathfrak{g}_i$, $i=1, \cdots, 7$, is also applicable to prove the complete integrability of the SR geodesic flow 
on $\mathbb{S}_{\textup{QH}}^7$, i.e.  with respect to the quaternionic contact structure. 
According to (\ref{Hamiltonian-QH-definition}) the Hamiltonian in this case has the form
\begin{equation*}
H_{\textup{sub}, \textup{QH}}(q,\xi)=\dfrac{1}{2}\left\|\xi\right\|^2-\dfrac{1}{2}\left(\left\langle A_6q, \xi\right\rangle^2
+\left\langle A_7q, \xi\right\rangle^2+\left\langle A_6A_7q, \xi\right\rangle^2\right).
\end{equation*} 
Applying the relation $2f_5\circ \pi_5(q\wedge \xi)-2f_4\circ \pi_4 (q\wedge \xi)=\left\langle A_6A_7q, \xi\right\rangle^2$ and \eqref{eq_!!!} shows: 
\begin{align*}
H_{\textup{sub}, \textup{QH}}
&=
H_{\textup{sub},5}-f_5\circ \pi_5 \circ P+f_4\circ \pi_4 \circ P\\
&=
\dfrac{1}{4}f_7-f_6\circ \pi_6\circ P+f_4\circ \pi_4 \circ P. 
\end{align*}
The above integrability result implies that the SR Hamiltonian system for $H_{\textup{sub}, \textup{QH}}$ is also completely integrable in the sense of Liouville. 
%%%%%%%%%%%%%%%%%%%%%%%%%%%%%%%%%%%%%%%%%%%%%%%%%%%%%%%%%%%%%%%%%%%%%%%%%%%
\subsection{Complete integrability of SR geodesic flow of $\mathbb{S}_{\textup{T},4}^7$ and $\mathbb{S}_{\textup{T},6}^7 = \mathbb{S}_{\textup{H}}^7$}
%%%%%%%%%%%%%%%%%%%%%%%%%%%%%%%%%%%%%%%%%%%%%%%%%%%%%%%%%%%%%%%%%%%%%%%%
Consider the sequence of strictly increasing non-degenerate subalgebras
\begin{equation}\label{Lie-subalgebras-rank-4-case}
\mathfrak{h}_1\subset \mathfrak{h}_2\subset \mathfrak{h}_3 \subset \mathfrak{h}_4 \subset \mathfrak{h}_5 \subset \mathfrak{h}_6 \subset \mathfrak{h}_7:=\mathfrak{so}(8) 
\end{equation}
defined by 
\begin{align*}
\mathfrak{h}_{\ell}
&=
\mathrm{span}_{\mathbb{R}}\left\{A_iA_j\mid 1\leq i<j\leq \ell+1\right\}, 
\quad \text{if}\; 1 \leq \ell \leq  3; \\
\mathfrak{h}_4
&=
\mathrm{span}_{\mathbb{R}}\left\{A_iA_j\mid 1\leq i<j\leq 4\, \text{or}\, (i,j)=(5,6)\right\}, \\
\mathfrak{h}_5
&=
\mathrm{span}_{\mathbb{R}}\left\{A_iA_j\mid 1\leq i<j\leq 4\, \text{or}\, 5\leq i<j\leq7\right\}, \\
\mathfrak{h}_6
&=
\mathrm{span}_{\mathbb{R}}\left(\left\{A_iA_j\mid 1\leq i<j\leq 4\, \text{or}\, 5\leq i<j\leq7\right\}\cup\left\{A_5, A_6,A_7\right\}\right). 
\end{align*}
Note that $\mathfrak{h}_{\ell}\cong \mathfrak{so}(\ell+1)$, $\ell =1,2,3$, $\mathfrak{h}_4\cong \mathfrak{so}(4)\oplus \mathfrak{so}(2)$, $\mathfrak{g}_5\cong \mathfrak{so}(4)\oplus \mathfrak{so}(3)$, and $\mathfrak{h}_6\cong \mathfrak{so}(4)\oplus \mathfrak{so}(4)$. 

On $\mathfrak{h}_i$, $i=1, \ldots, 7$, we define the invariant quadratic functions 
\[
g_i(Y_i):=\dfrac{1}{2}B_{\mathfrak{h}_i}(Y_i,Y_i), \quad Y_i\in\mathfrak{h}_i. 
\]
}
As before, we can show that the compositions $g_i\circ \pi_{\mathfrak{h}_i}$, $i=1,\cdots, 7$, give  functionally independent and Poisson commuting functions, where $\pi_{\mathfrak{h}_i}:\mathfrak{so}(8)\rightarrow \mathfrak{h}_i$ denotes the orthogonal projection. Note that the SR Hamiltonian 
\begin{align*}
H_{\textup{sub},4}(q,\xi)
=
\dfrac{1}{2}\sum_{k=1}^4\left\langle A_kq, \xi\right\rangle^2
&=
\dfrac{1}{2}\sum_{k=1}^7\left\langle A_kq, \xi\right\rangle^2
-
\dfrac{1}{2}\sum_{k=5}^7\left\langle A_kq, \xi\right\rangle^2\\
&=
\dfrac{1}{2}\left\|\xi\right\|^2
-
\dfrac{1}{2}\sum_{k=5}^7\left\langle A_kq, \xi\right\rangle^2
\end{align*}
can be decomposed as 
\[
H_{\textup{sub},4}
=
\dfrac{1}{4}g_7-  g_6\circ \pi_{\mathfrak{h}_6}\circ P +g_5\circ \pi_{\mathfrak{h}_5}\circ P. 
\]
Thus, the SR Hamiltonian system for $H_{\textup{sub},4}$ is completely integrable, as well. 

\medskip

Finally, we consider the SR geodesic flow on $\mathbb{S}_{\textup{T},6}^7 = \mathbb{S}_{\textup{H}}^7$.
Take the sequence of strictly increasing non-degenerate subalgebras
\begin{equation}\label{Lie-subalgebras-rank-6-case}
\mathfrak{k}_1\subset \mathfrak{k}_2\subset \mathfrak{k}_3 \subset \mathfrak{k}_4 \subset \mathfrak{k}_5 \subset \mathfrak{k}_6 \subset \mathfrak{k}_7:=\mathfrak{so}(8) 
\end{equation}
defined as 
\begin{align*}
\mathfrak{k}_{\ell}
&=
\mathrm{span}_{\mathbb{R}}\left\{A_iA_j\mid 1\leq i<j\leq \ell+1\right\}, 
\quad \text{if}\; 1 \leq \ell \leq  5; \\
\mathfrak{k}_6
&=
\mathrm{span}_{\mathbb{R}}\left(\left\{A_iA_j\mid 1\leq i<j\leq 6\, \right\}\cup\left\{A_7\right\}\right). 
\end{align*}
Note that $\mathfrak{k}_{\ell}\cong \mathfrak{so}(\ell+1)$, $\ell =1,2,3,4,5$, and $\mathfrak{k}_6\cong \mathfrak{so}(6)\oplus \mathfrak{so}(2)$. Consider the invariant 
functions on each of $\mathfrak{k}_i$, $i=1,\cdots, 7$: \\
\[
h_i(Z_i):=\dfrac{1}{2}B_{\mathfrak{k}_i}(Z_i,Z_i), \quad  Z_i\in\mathfrak{k}_i. 
\]
Clearly, 
\[
H_{\textup{sub},6}(q,\xi)
=
\dfrac{1}{2}\sum_{k=1}^6\left\langle A_kq, \xi\right\rangle^2
=
\dfrac{1}{2}\sum_{k=1}^7\left\langle A_kq, \xi\right\rangle^2
-
\dfrac{1}{2}\left\langle A_7q, \xi\right\rangle^2
=
\dfrac{1}{2}\left\|\xi\right\|^2
-
\dfrac{1}{2}\left\langle A_7q, \xi\right\rangle^2
\]
and hence 
\[
H_{\textup{sub},6}
=
\dfrac{1}{4}h_7-  h_6\circ \pi_{\mathfrak{k}_6} \circ P+h_5\circ \pi_{\mathfrak{k}_5} \circ P. 
\]
The next theorem summarizes the above results. 
}
\begin{thm}
The SR geodesic flows on $\mathbb{S}_{\textup{T},j}^7$ for $j=4, \ldots, 5$, on $\mathbb{S}_{\textup{T},6}^7 = \mathbb{S}_{\textup{H}}^7$, and on $\mathbb{S}_{\textup{QH}}^7$ 
are completely integrable in the sense of Liouville. 
\end{thm}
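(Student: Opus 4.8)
The plan is to assemble, for each of the four SR structures, the three ingredients of Liouville complete integrability on the $14$-dimensional symplectic manifold $T^{*}\mathbb{S}^{7}\cong T\mathbb{S}^{7}$: seven first integrals that (i) pairwise Poisson commute, (ii) are functionally independent on a dense open subset, and (iii) include the relevant SR Hamiltonian. Almost everything has been prepared in the discussion above, so the proof reduces to collecting the pieces chain by chain: the chain $\mathfrak{g}_{1}\subset\cdots\subset\mathfrak{g}_{7}=\mathfrak{so}(8)$ serves $\mathbb{S}^{7}_{\textup{T},5}$ and $\mathbb{S}^{7}_{\textup{QH}}$, the chain $\mathfrak{h}_{1}\subset\cdots\subset\mathfrak{h}_{7}$ serves $\mathbb{S}^{7}_{\textup{T},4}$, and the chain $\mathfrak{k}_{1}\subset\cdots\subset\mathfrak{k}_{7}$ serves $\mathbb{S}^{7}_{\textup{T},6}=\mathbb{S}^{7}_{\textup{H}}$.

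For (i) I would first note that in each chain the Clifford relations \eqref{GL-Matrix-Clifford-relations} make the spans Lie subalgebras and Proposition~\ref{prop_onb} makes every restriction $B_{\mathfrak{a}_{i}}$ non-degenerate, since all generators are orthogonal members of the orthonormal basis $\{\tfrac{1}{2\sqrt2}A_i,\tfrac{1}{2\sqrt2}A_jA_k\}$. Applying Proposition~\ref{porp_thimm} inductively along each chain then shows that the pullbacks along the orthogonal projections of the bi-invariant quadratic functions ($f_i\circ\pi_i$, resp. $g_i\circ\pi_{\mathfrak{h}_i}$, $h_i\circ\pi_{\mathfrak{k}_i}$) pairwise Poisson commute on $\mathfrak{so}(8)$ with the Lie--Poisson bracket \eqref{Lie-Poisson-bracket}; since the momentum map $P$ in \eqref{moment_map} is a Poisson map, composing with $P$ produces seven pairwise Poisson commuting functions on $T\mathbb{S}^{7}$ in each case.

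For (ii) I would reuse the mechanism of Proposition~\ref{prop-functional-independent-fi-pi} essentially verbatim. It rests only on the identity $\nabla(f_i\circ\pi_i)(X)=\pi_i(X)$ together with the existence, at each step, of an element of the $B_{\mathfrak{a}_i}$-orthogonal complement of $\mathfrak{a}_{i-1}$ in $\mathfrak{a}_{i}$ of the form $A_i$ or $A_jA_k$; for such an element $Aq$ is a nonzero tangent vector and $B(A,q\wedge\xi)=\xi^{\mathrm T}(Aq)\neq0$ off a proper subspace of $T_q\mathbb{S}^7$, which drives the inductive elimination showing $\pi_1(q\wedge\xi),\dots,\pi_7(q\wedge\xi)$ linearly independent on a dense open set. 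I would then simply read off from the definitions of the $\mathfrak{h}$- and $\mathfrak{k}$-chains that each successive step adds exactly one such fresh generator (the various $A_jA_{\ell+1}$ in the ``$\mathfrak{so}(\ell+1)$'' steps, then $A_5A_6$, then one of $A_5,A_6,A_7$, etc.), which is the place where the argument has to be checked rather than merely invoked.

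Finally, for (iii) the decompositions already displayed express $H_{\textup{sub},5}$, $H_{\textup{sub},\textup{QH}}$, $H_{\textup{sub},4}$, and $H_{\textup{sub},6}$ as linear combinations of the corresponding $f_i\circ\pi_i\circ P$ (resp. $g$-, $h$-pullbacks), the change of basis on the relevant triple of functions being invertible; hence replacing one commuting integral by $H_{\textup{sub}}$ preserves both Poisson commutativity and, generically, the rank of the differential. This yields, in each of the four cases, seven functionally independent, pairwise Poisson commuting first integrals including the SR Hamiltonian on the $14$-dimensional $T^{*}\mathbb{S}^{7}$, which is precisely Liouville complete integrability. I expect the only genuine obstacle to be carrying out (ii) for the $\mathfrak{h}$- and $\mathfrak{k}$-chains, i.e. confirming that every successive orthogonal complement really contains a fresh $A_i$ or $A_jA_k$ so that the rank-drop locus is a proper closed subset; the commutativity and the Hamiltonian decompositions amount to bookkeeping with the Clifford relations.
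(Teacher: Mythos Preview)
Your proposal is correct and follows essentially the same route as the paper: the theorem there is a summary statement, and its ``proof'' is precisely the assembly you describe---Thimm's method along the three chains $\mathfrak{g}_\bullet$, $\mathfrak{h}_\bullet$, $\mathfrak{k}_\bullet$ for Poisson commutativity, the gradient argument of Proposition~\ref{prop-functional-independent-fi-pi} (reused verbatim for the other two chains, as you anticipate) for functional independence, and the explicit linear decompositions of the four Hamiltonians. Your identification of the one point requiring genuine verification---that each successive orthogonal complement in the $\mathfrak{h}$- and $\mathfrak{k}$-chains contains a generator of the form $A_i$ or $A_jA_k$---is exactly what the paper leaves implicit when it writes ``As before, we can show\ldots''.
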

%%%%%%%%%%%%%%%%%%%%%%%%%%%%%%%%%%%%%%%%%%%%%%%%%%%%%%%%%%%
%%%%%%%%%%%%%%%%%%%%%%%%%%%%%%%%%%%%%%%%%%%%%%%%%%%%%%%%%%%
\section{Subriemannian isometries}
%%%%%%%%%%%%%%%%%%%%%%%%%%%%%%%%%%%%%%%%%%%%%%%%%%%%%%%%%%%
%%%%%%%%%%%%%%%%%%%%%%%%%%%%%%%%%%%%%%%%%%%%%%%%%%%%%%%%%%%
We review the definition of Killing vector fields and their relation to the SR Hamiltonian function. 
See \cite[\S 5.3]{ABB} for the details. 
A (complete) vector field on an SR manifold is called a Killing vector field if it generates a one-parameter group of SR isometries. 
Given a vector field $X$ on an SR manifold $M=(M, \mathcal{H}, g)$ we define the corresponding smooth function $f_X$ on $T^{\ast}M$ through $f_X(\xi)=\xi(X_q)$ for any $\xi\in T^{\ast}_qM$, $q\in M$. 
(This notation should not be confused with the previous function $F_{\eta}$, $\eta\in\mathfrak{g}$, in \eqref{Lie_alg_induced_vf}.) 
Taking into account  the canonical vector field $X(A)$ for $A\in \mathfrak{so}(8)$ in \eqref{canonical_vf} and the function $F_A$ in \eqref{Lie_alg_induced_vf} for $A=\eta$, we have 
\[
f_{X(A)}=F_A. 
\]
Then, we have the following characterization of Killing vector fields: 
\begin{lem}[See Lemma 5.15, p.158, in \cite{ABB}]\label{lem_hamiltonian_killing}
A complete vector field $X$ on $M$ is a Killing vector field of the SR manifold $M$ if and only if 
\[
\left\{H_{\textup{sub}}, f_X\right\}=0,
\]
where $H_{\textup{sub}}$ denotes the SR Hamiltonian. 
\end{lem}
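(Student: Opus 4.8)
The plan is to transfer the whole statement to the cotangent bundle $T^*M$ and to exploit that the SR Hamiltonian $H_{\textup{sub}}$ is a complete invariant of the SR structure. I would build the argument on two classical facts. First, since $X$ is complete, its flow $\phi_t$ on $M$ lifts to the cotangent flow $\Phi_t$ on $T^*M$ (the cotangent lift of $\phi_t$, acting by the transpose-inverse of $d\phi_t$ on covectors), which is a one-parameter group of symplectomorphisms of $(T^*M,\omega)$; moreover its infinitesimal generator is precisely the Hamiltonian vector field $X_{f_X}$ of the fibrewise-linear function $f_X$. This is the standard cotangent-lift computation (see \cite[Chapter 9]{marsden-ratiu}), and it is exactly the mechanism already recorded in \eqref{Lie_alg_induced_vf}, where $X_{F_\eta}=\eta_N$; the only additional input is that it applies to an arbitrary complete vector field, not merely to those arising from a group action. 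Completeness of $X$ guarantees that $\Phi_t$ is a globally defined flow.

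Second, I would set up the dictionary between SR isometries of $M$ and symmetries of $H_{\textup{sub}}$: a diffeomorphism $\psi$ of $M$ is an SR isometry (that is, $\psi_*\mathcal{H}=\mathcal{H}$ and $\psi$ preserves $g|_{\mathcal{H}}$) if and only if its cotangent lift preserves the SR Hamiltonian, $H_{\textup{sub}}\circ\widehat{\psi}=H_{\textup{sub}}$. The forward implication is immediate, since an SR isometry preserves the symmetric bundle map $\beta$ defining $H_{\textup{sub}}=\frac{1}{2}\,g(\beta(\cdot),\beta(\cdot))$. The converse is where one invokes that $H_{\textup{sub}}$ determines the SR structure uniquely (cf. \cite[Proposition 1.10]{Mo}): preservation of $H_{\textup{sub}}$ forces preservation of $(\mathcal{H},g)$ because $\mathcal{H}$ is recovered as the fibrewise image of the bundle map associated to $H_{\textup{sub}}$, together with $g$ on that image.

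With these two facts the lemma reduces to a differentiation/integration argument. By definition $X$ is a Killing vector field if and only if each $\phi_t$ is an SR isometry, which by the dictionary is equivalent to $H_{\textup{sub}}\circ\Phi_t=H_{\textup{sub}}$ for all $t$. Assuming this, differentiation at $t=0$ together with the fact that $X_{f_X}$ generates $\Phi_t$ gives $X_{f_X}[H_{\textup{sub}}]=\{f_X,H_{\textup{sub}}\}=0$, hence $\{H_{\textup{sub}},f_X\}=0$. Conversely, if $\{H_{\textup{sub}},f_X\}=0$, then $H_{\textup{sub}}$ is constant along the flow of $X_{f_X}=\Phi_t$, so $H_{\textup{sub}}\circ\Phi_t\equiv H_{\textup{sub}}$; the dictionary then shows that each $\phi_t$ is an SR isometry, so that $X$ is Killing.

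I expect the main obstacle to be the converse half of the dictionary in the second step, namely showing that invariance of $H_{\textup{sub}}$ alone already forces invariance of the distribution $\mathcal{H}$ and of the metric $g$ on it. This is exactly where the uniqueness statement that the SR Hamiltonian recovers the SR structure is indispensable, and one must handle $H_{\textup{sub}}$ carefully as a \emph{degenerate} fibre-quadratic form: it encodes $\mathcal{H}$ as the image of its associated bundle map $\beta$ (equivalently $\mathcal{H}^{\perp}$ as the fibrewise kernel) and simultaneously encodes $g$ on that image. Once this is made precise the remaining steps are routine.
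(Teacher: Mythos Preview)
The paper does not supply its own proof of this lemma: it is quoted verbatim from \cite[Lemma~5.15]{ABB} and is used as a black box in the subsequent computations. There is therefore nothing to compare against on the paper's side.

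Your proposed argument is correct and is in fact the standard one (and, up to presentation, is the proof given in \cite{ABB}). The three ingredients you isolate are exactly right: (i) the cotangent lift $\Phi_t$ of the flow $\phi_t$ of $X$ is a one-parameter group of symplectomorphisms generated by the Hamiltonian vector field $X_{f_X}$; (ii) a diffeomorphism $\psi$ of $M$ is an SR isometry if and only if its cotangent lift preserves $H_{\textup{sub}}$; (iii) differentiating and integrating the invariance relation $H_{\textup{sub}}\circ\Phi_t=H_{\textup{sub}}$ yields the Poisson-bracket condition. You have also correctly identified the only point requiring care, namely the converse direction of (ii): that $H_{\textup{sub}}$ determines the SR structure, so invariance of the degenerate fibre-quadratic $H_{\textup{sub}}$ forces invariance of both the distribution (recovered as the image of the associated bundle map $\beta$, equivalently via the annihilator of the kernel) and the metric on it. The reference to \cite[Proposition~1.10]{Mo} that you give is appropriate here. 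One cosmetic remark: in the last paragraph you write ``the flow of $X_{f_X}=\Phi_t$'', which conflates the vector field with its flow; it would be cleaner to say ``$H_{\textup{sub}}$ is constant along the flow $\Phi_t$ of $X_{f_X}$''.
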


In what follows, we discuss the SR isometry group for each of the SR structures on $\mathbb{S}^7$ on the basis of Lemma \ref{lem_hamiltonian_killing}. 
Recall that a smooth mapping $\Phi: M\rightarrow M^{\prime}$ between two given SR manifolds $(M, \mathcal{H}, g)$ and $(M^{\prime}, \mathcal{H}^{\prime}, g^{\prime})$ is called an  {\it (infinitesimal)} {\it subriemannian (SR) isometry} if it is a diffeomorphism which is horizontal, i.e. the differential $\Phi_{\ast}=(d\Phi)_q: T_qM\rightarrow T_{\Phi(q)}M$ satisfies $\Phi_{\ast}\left(\mathcal{H}_q\right)\subset \mathcal{H}_{\Phi(q)}^{\prime}$, and isometric, i.e. $\Phi_{\ast}$ is an isometry between the metric spaces $\left(\mathcal{H}_q, g_q\right)$ and $\left(\mathcal{H}_{\Phi(q)}^{\prime}, g_{\Phi(q)}^{\prime}\right)$. 
The group of all the SR isometries for a given SR manifold is called the SR isometry group. 
See \cite{ABB,BR,BLT,CaLeDo,S1,S2} for the details. 
\begin{rem}\label{rem-isometry-group-metric}
We may as well consider $M$ as a metric space $(M, d_{\textup{CC}})$ with respect to the {\it Carnot-Carath\'{e}odory distance}. Let $x,y \in M$: 
\begin{equation*}
d_{\textup{CC}}(x,y):= \inf \Big{\{}\ell(\gamma)\: : \: \gamma: [a,b] \rightarrow M, \hspace{1ex} \gamma^{\prime}(t) \in \mathcal{H}_{\gamma(t)}, 
\hspace{1ex} \gamma(a)=x, \hspace{1ex} \gamma(b)=y \Big{\}}, 
\end{equation*}
where $\ell(\gamma)= \displaystyle \int_a^b \sqrt{g_{\gamma(t)}\left(\gamma^{\prime}(t), \gamma^{\prime}(t)\right)} \mathsf{d}t$ denotes the length functional in SR geometry (cf. \cite{Mo}). A 
distance preserving homeomorphism $\Phi: M \rightarrow M$, i.e. 
\begin{equation*}
d_{\textup{CC}}(x,y)= d_{\textup{CC}}\big{(} \Phi(x), \Phi(y) \big{)} \hspace{3ex} \forall \; x,y \in M
\end{equation*}
is called a {\it (metric SR) isometry} of $M$. Infinitesimal SR isometries are metric SR isometries. 
Conversely, on an equiregular SR manifold SR metric isometries are known 
to be smooth, \cite[Cor. 1.5]{CaLeDo} and define infinitesimal ismetries, \cite[Theorem 8.2, (c)]{S1}. Moreover, $d_{\textup{CC}}$ induces the manifold topology on $M$. 
\end{rem}
\vspace{1ex}\par 
In the present paper, we denote the SR isometry group of $\mathbb{S}_{\textup{T}, \ell}^7$, $\ell =4, 5$, $\mathbb{S}_{\textup{QH}}^7$, and $\mathbb{S}_{\textup{T}, 6}^7=\mathbb{S}_{\textup{H}}^7$, respectively, by $\mathrm{Iso}_{\textup{sub}}\left(\mathbb{S}_{\textup{T},\ell}^7\right)$, $\mathrm{Iso}_{\textup{sub}}\left(\mathbb{S}_{\textup{QH}}^7\right)$, and $\mathrm{Iso}_{\textup{sub}}\left(\mathbb{S}_{\textup{T}, 6}^7\right)=\mathrm{Iso}_{\textup{sub}}\left(\mathbb{S}_{\textup{H}}^7\right)$. 
In a canonical way, $O(8)$ acts on $\mathbb{R}^8$ and $\mathbb{S}^7$ via left-multiplication. 
The induced action on 
$T^*\mathbb{R}^8 \cong \mathbb{R}^8 \times \mathbb{R}^8$ is 
given by: 
\begin{equation*}
a\cdot ({q},\xi)= (a{q}, a\xi) \hspace{3ex} \mbox{\it where } \hspace{3ex} {(q,\xi)\in \mathbb{R}^8\times \mathbb{R}^8}, \quad a\in O(8). 
\end{equation*}
As the group $O(8)$ acts on $\mathbb{S}^7$ transitively and isometrically with respect to the standard Riemannian metric, we focus on the SR isometry group in relation to the $O(8)$-action. 

\begin{lem}\label{Lemma-intersection-iso-O-8}
Let $\textup{Iso}_{\textup{sub}}(\mathbb{S}^7)$ be one of the above SR isometry groups on $\mathbb{S}^7$. Then the intersection 
$\textup{Iso}_{\textup{sub}}(\mathbb{S}^7)\cap O(8)$ is a Lie subgroup of 
$O(8)$. 
\end{lem}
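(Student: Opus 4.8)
The plan is to show that $\mathrm{Iso}_{\textup{sub}}(\mathbb{S}^7)\cap O(8)$ is a closed subgroup of the Lie group $O(8)$; by Cartan's closed subgroup theorem it is then automatically an embedded Lie subgroup. It is immediate that the intersection is a subgroup: the identity of $O(8)$ is an SR isometry, and composition and inversion of SR isometries are SR isometries (an SR isometry, being in particular a diffeomorphism preserving the horizontal distribution and the metric on it, has an inverse with the same properties), so the set of $a \in O(8)$ that happen to be SR isometries is closed under the group operations. Thus the whole content of the lemma is the topological closedness of this set inside $O(8)$.

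First I would reduce closedness to a condition that is manifestly closed. Fix the SR structure under consideration, say with horizontal distribution $\mathcal{D}$ and SR Hamiltonian $H_{\textup{sub}}$ on $T^*\mathbb{S}^7 \cong T\mathbb{S}^7 \subset \mathbb{R}^8\times\mathbb{R}^8$. Since $O(8)$ acts isometrically on $\mathbb{S}^7$ for the Riemannian metric and the SR metric $g$ is the restriction of that metric to $\mathcal{D}$, a matrix $a\in O(8)$ is an SR isometry of $\mathbb{S}^7$ if and only if its differential preserves the distribution, i.e. $a_\ast\bigl(\mathcal{D}_q\bigr)=\mathcal{D}_{aq}$ for all $q\in\mathbb{S}^7$ (the metric part is then free). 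Equivalently, using the identification of $\xi\in T^*_q\mathbb{S}^7$ with $(q,\xi)\in\mathbb{R}^8\times\mathbb{R}^8$ and the fact that the $O(8)$-action on $T^*\mathbb{S}^7$ is $a\cdot(q,\xi)=(aq,a\xi)$, the matrix $a$ is an SR isometry precisely when $H_{\textup{sub}}(aq,a\xi)=H_{\textup{sub}}(q,\xi)$ for all $(q,\xi)\in T\mathbb{S}^7$: indeed by Lemma~\ref{lem_hamiltonian_killing} and Remark~\ref{rem-isometry-group-metric} the SR isometries are exactly the metric isometries, and on the homogeneous Riemannian sphere a Riemannian isometry that is additionally an SR isometry is one whose induced symplectomorphism of $T^*\mathbb{S}^7$ preserves $H_{\textup{sub}}$; conversely any $a\in O(8)$ preserving $H_{\textup{sub}}$ preserves $\mathcal{D}=\{\,v\in T\mathbb{S}^7 : \text{$v$ is in the image of the bundle map determined by $H_{\textup{sub}}$}\,\}$ and the metric on it, hence is an SR isometry. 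In each of our cases $H_{\textup{sub}}$ extends to a polynomial on $\mathbb{R}^8\times\mathbb{R}^8$ (see~\eqref{H_j_sub_definition}, \eqref{Hamiltonian-QH-definition}), so the condition ``$H_{\textup{sub}}\circ a = H_{\textup{sub}}$ on $T\mathbb{S}^7$'' is a closed condition on $a$: it is the vanishing of the continuous function $a\mapsto \sup_{(q,\xi)\in K}\bigl|H_{\textup{sub}}(aq,a\xi)-H_{\textup{sub}}(q,\xi)\bigr|$ for $K$ a compact exhaustion (here $K$ can be taken to be the unit cotangent bundle, which is compact). Hence $\mathrm{Iso}_{\textup{sub}}(\mathbb{S}^7)\cap O(8)$ is closed in $O(8)$, and Cartan's theorem finishes the argument.

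The step I expect to require the most care is the equivalence ``$a\in O(8)$ is an SR isometry $\iff$ $a$ preserves $H_{\textup{sub}}$'', i.e.\ checking that for elements already known to be Riemannian isometries the SR-isometry condition is equivalent to invariance of the horizontal distribution, and that the latter is faithfully detected by $H_{\textup{sub}}$ via the bundle map $\beta$ of~\eqref{definition-beta-j}. This is where one uses that $H_{\textup{sub}}$ determines the SR structure uniquely (\cite[Proposition~1.10]{Mo}, cited in Section~\ref{section-2}) and that for an orthogonal $a$ the pullback of $H_{\textup{sub}}$ by the cotangent lift of the diffeomorphism $q\mapsto aq$ is exactly $(q,\xi)\mapsto H_{\textup{sub}}(aq,a\xi)$. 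Once this bookkeeping is done, closedness and Cartan's theorem are routine. (One may alternatively avoid Cartan's theorem altogether by observing that the defining condition cuts out $\mathrm{Iso}_{\textup{sub}}(\mathbb{S}^7)\cap O(8)$ as the common zero set of finitely many polynomials in the entries of $a$ together with the polynomial relations defining $O(8)$, hence a real algebraic subgroup, which is automatically a Lie subgroup; but invoking the closed subgroup theorem is cleaner.)
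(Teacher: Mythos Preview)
Your proof is correct and reaches the same conclusion via Cartan's closed subgroup theorem, but the route to closedness differs from the paper's. The paper argues through the Carnot--Carath\'eodory metric: it identifies infinitesimal SR isometries with metric isometries (using the equiregularity of the structures and the regularity results of Capogna--Le~Donne and Strichartz cited in Remark~\ref{rem-isometry-group-metric}), and then observes that a pointwise limit in $O(8)$ of metric isometries is again a metric isometry in $O(8)$. You instead characterize $a\in O(8)$ being an SR isometry by the invariance of $H_{\textup{sub}}$ under the cotangent lift $(q,\xi)\mapsto(aq,a\xi)$, which is a manifestly closed (indeed polynomial) condition on $a$. Your approach is more self-contained in that it avoids invoking the metric-isometry regularity theory, and your aside that the intersection is in fact a real algebraic subgroup is a nice strengthening. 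One small comment: your appeal to Lemma~\ref{lem_hamiltonian_killing} is slightly misplaced, since that lemma concerns Killing \emph{vector fields} rather than global diffeomorphisms; the equivalence you need is the one you also state directly, namely that $H_{\textup{sub}}$ determines the cometric (hence $\mathcal{D}$ via the image of $\beta$) and that an orthogonal $a$ preserving $H_{\textup{sub}}$ therefore preserves both $\mathcal{D}$ and the metric on it. With that adjustment your argument is clean.
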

\begin{proof}
As is well-known, any closed subgroup of the general linear group is a Lie group (see \cite[p. 71-72]{MoZi} or \cite{JvN}). Hence, it is sufficient to show 
that $\textup{Iso}_{\textup{sub}}(\mathbb{S}^7)\cap O(8)$ is closed in $O(8)$. Recall that all the SR structures in Section \ref{section-2} are equiregular. 
Clearly, the pointwise limit $\Phi$ of a sequence $(\Phi_n)_{n\in \mathbb{N}}$ of (metric) isometries in $\textup{Iso}_{\textup{sub}}(\mathbb{S}^7)\cap O(8)$ is a metric isometry and also contained in $O(8)$. 
\end{proof}
%%%%%%%%%%%%%%%%%%%%%%%%%%%%%%%%%%%%%%%%%%%%%%%%%%%%%%%%%%%
\subsection{SR isometry group of $\mathbb{S}_{\textup{T},5}^7$}
With $t\in \mathbb{R}$ and $1\leq i <j\leq 7$, we use the notation 
\begin{equation*}
g_{ij}(t):= e^{tA_iA_j} \in O(8). 
\end{equation*}
\begin{lem}\label{Lemma_invariance_Hamiltonian}
The SR Hamiltonian $H_{\textup{sub},5}$ is invariant under the action induced by $g_{ij}(t)$ where $1\leq i< j\leq 5$ or $(i,j)=(6,7)$ for all $t \in \mathbb{R}$, i.e.
\begin{equation*}
H_{\textup{sub},5}\big{(}g_{ij}(t){q},g_{ij}(t)\xi\big{)}= H_{\textup{sub},5}({q},\xi) \hspace{3ex} \mbox{\it for all} \hspace{3ex} ({q},\xi) \in \mathbb{R}^8 \times \mathbb{R}^8. 
\end{equation*}
\end{lem}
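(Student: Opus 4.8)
The plan is to use the closed expression $H_{\textup{sub},5}(q,\xi)=\tfrac12\|\xi\|^2-\tfrac12\big(\langle A_6q,\xi\rangle^2+\langle A_7q,\xi\rangle^2\big)$ established above, together with the fact that in the ambient coordinates the $O(8)$-action intertwines with the adjoint action via the momentum map. Indeed, for $g\in O(8)$ one has $g^{-1}=g^{\mathrm T}$, so by \eqref{GL-identification-P} $P(gq,g\xi)=\tfrac12\big(g\xi(gq)^{\mathrm T}-gq(g\xi)^{\mathrm T}\big)=g\,(q\wedge\xi)\,g^{\mathrm T}=\mathrm{Ad}_g\big(P(q,\xi)\big)$. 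Since $g_{ij}(t)=e^{tA_iA_j}$ is the exponential of a skew-symmetric matrix, it lies in $O(8)$, and the term $\|\xi\|^2$ is manifestly invariant under $(q,\xi)\mapsto(gq,g\xi)$. Hence the statement reduces to the invariance of the counterterm $Q(q,\xi):=\langle A_6q,\xi\rangle^2+\langle A_7q,\xi\rangle^2$.

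Next I would rewrite $Q$ intrinsically on $\mathfrak{so}(8)$. By \eqref{eq_**}, $\langle A_kq,\xi\rangle=B(A_k,q\wedge\xi)$, and since $\tfrac1{2\sqrt2}A_6,\tfrac1{2\sqrt2}A_7$ form a $B$-orthonormal system (Proposition \ref{prop_onb}), one gets $Q(q,\xi)=8\,\big\|\pi_V(q\wedge\xi)\big\|_B^2$, where $V:=\mathrm{span}_{\mathbb R}\{A_6,A_7\}\subset\mathfrak{so}(8)$ and $\pi_V$ is the $B$-orthogonal projection onto $V$. The bilinear form $B$ is $\mathrm{ad}$-invariant, so $\mathrm{Ad}_g$ is a $B$-isometry of $\mathfrak{so}(8)$; if in addition $\mathrm{Ad}_g$ preserves $V$, then it also preserves the $B$-orthogonal complement $V^{\perp_B}$, commutes with $\pi_V$, and preserves the norm $\|\pi_V(\cdot)\|_B$. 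Combined with $P(gq,g\xi)=\mathrm{Ad}_g P(q,\xi)$ this yields $Q(gq,g\xi)=Q(q,\xi)$, hence $H_{\textup{sub},5}(gq,g\xi)=H_{\textup{sub},5}(q,\xi)$ for $g=g_{ij}(t)$.

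It therefore remains only to verify that $\mathrm{Ad}_{g_{ij}(t)}$ preserves the $2$-plane $V$ for $1\leq i<j\leq5$ and for $(i,j)=(6,7)$. Since $\mathrm{Ad}_{g_{ij}(t)}=e^{t\,\mathrm{ad}_{A_iA_j}}$, it suffices to check $[A_iA_j,A_6]\in V$ and $[A_iA_j,A_7]\in V$, which is immediate from the Clifford relations \eqref{GL-Matrix-Clifford-relations}: if $i,j\in\{1,\dots,5\}$ then $A_6$ and $A_7$ each anticommute with both $A_i$ and $A_j$, hence commute with $A_iA_j$, so $[A_iA_j,A_6]=[A_iA_j,A_7]=0$; and if $(i,j)=(6,7)$, then using $A_6^2=A_7^2=-\mathsf{E}_8$ and $A_6A_7=-A_7A_6$ one computes $[A_6A_7,A_6]=2A_7$ and $[A_6A_7,A_7]=-2A_6$, both in $V$ (so that $\mathrm{Ad}_{g_{6,7}(t)}$ is just a rotation of the plane $V$). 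I do not expect any real obstacle here; the only points that deserve an explicit line are the identity $P(gq,g\xi)=\mathrm{Ad}_g P(q,\xi)$ and the elementary fact that a $B$-isometry stabilizing $V$ commutes with $\pi_V$.
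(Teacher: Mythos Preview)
Your argument is sound and takes a genuinely different, more structural route than the paper. The paper works directly from the definition $H_{\textup{sub},5}(q,\xi)=\tfrac12\sum_{k=1}^5\langle A_kq,\xi\rangle^2$: for, say, $(i,j)=(1,2)$ it observes that $g_{12}(t)$ commutes with $A_3,A_4,A_5$, so those three summands are preserved individually, and then checks by hand, using $g_{12}(t)=\cos t\,\mathsf E_8+\sin t\,A_1A_2$, that the remaining pair $\langle A_1q,\xi\rangle^2+\langle A_2q,\xi\rangle^2$ is invariant. Your approach via the momentum map identity $P(gq,g\xi)=\mathrm{Ad}_g P(q,\xi)$ and the $\mathrm{Ad}$-stability of a two--plane in $\mathfrak{so}(8)$ is cleaner and explains \emph{why} the invariance holds; it also makes the $(6,7)$ case transparent as a rotation of $V$.

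There is, however, one small slip. The identity $H_{\textup{sub},5}(q,\xi)=\tfrac12\|\xi\|^2-\tfrac12 Q(q,\xi)$ that you invoke was derived in the paper only for $(q,\xi)\in T\mathbb{S}^7$ (it uses that $A_1q,\dots,A_7q$ is an orthonormal basis of $T_q\mathbb{S}^7$), whereas the lemma is stated for \emph{all} $(q,\xi)\in\mathbb{R}^8\times\mathbb{R}^8$, with $H_{\textup{sub},5}$ meaning the polynomial extension $\tfrac12\sum_{k=1}^5\langle A_kq,\xi\rangle^2$. The easiest repair keeps your method intact: take $W:=\mathrm{span}\{A_1,\dots,A_5\}$ instead of $V$. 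Since $\langle A_kq,\xi\rangle=B(A_k,q\wedge\xi)$ holds identically on $\mathbb{R}^8\times\mathbb{R}^8$, you get $H_{\textup{sub},5}(q,\xi)=4\|\pi_W(q\wedge\xi)\|_B^2$ globally, and the same commutator check shows $\mathrm{ad}_{A_iA_j}(W)\subset W$ for $1\le i<j\le5$ and for $(i,j)=(6,7)$. Alternatively, note that the global version of your splitting is $H_{\textup{sub},5}=\tfrac12\bigl(\|q\|^2\|\xi\|^2-\langle q,\xi\rangle^2\bigr)-\tfrac12 Q$, whose first term is again manifestly $O(8)$-invariant.
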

\begin{proof}
We only check the case $(i,j)=(1,2)$ and the other cases can be proved similarly. Since $g_{12}(t)$ commutes with $A_3, \ldots, A_5$ it suffices to show that: 
\begin{equation}\label{Lemma_1_sufficient_condition}
\big{\langle} A_1g_{12}(t)q, g_{12}(t)\xi \big{\rangle}^2+ \big{\langle} A_2g_{12}(t){q}, g_{12}(t)\xi\big{\rangle}^2= \big{\langle} A_1{q}, \xi \big{\rangle}^2+ \big{\langle} A_2{q}, \xi\big{\rangle}^2. 
\end{equation}
Note that the anti-commutation relation (\ref{GL-Matrix-Clifford-relations}) imply $g_{12}(t)= { \mathsf{E}_8}\cos t+ A_1A_2 \sin t$. 
Inserting this relation above shows the statement by a direct calculation. 
\end{proof}
Now we consider the Lie subalgebra of $\mathfrak{g}_5 \subset \mathfrak{so}(8)$ defined in (\ref{chain_subalgebras_1}): 
\begin{equation*}
\mathfrak{g}_5=\textup{Lie} \big{\{} \left. A_iA_j, A_6A_7\: \right| \: i,j=1, \ldots 5, \; \; i<j \big{\}}.
\end{equation*}
Note that the above set of generators of $\mathfrak{g}_5$ is invariant up to a sign under the Lie bracket (commutator). Hence the Lie hull coincides with the 
linear span of generators.  

For each subalgebras $\mathfrak{g}_{\ell}$ in \eqref{chain_subalgebras_1}, $\ell=1, \cdots, 6$, let $G_{\ell} \subset O(8)$ be the corresponding Lie group and $\exp: \mathfrak{g}_{\ell} \rightarrow G_{\ell}$ the (matrix) exponential. 
Recall that the exponential mapping is surjective for an arbitrary compact connected group cf. \cite[Corollary 4.48]{knapp_2002}. 
\begin{lem}\label{Lemma_invariance_G_ell}
Each group $G_{\ell}$ for $\ell=1, \ldots, 5$ is generated by the set: 
$$\big{\{} \exp(tA_iA_j) \: \mid \: A_iA_j \: \: \textup{\it generator of } \: \mathfrak{g}_{\ell}, \: t \in \mathbb{R}\big{\}}.$$
In particular, the induced action by {$G_{\ell}$} leaves the SR Hamiltonian $H_{\textup{sub},5}$ invariant. 
\end{lem}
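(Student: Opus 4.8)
The plan is to prove Lemma \ref{Lemma_invariance_G_ell} in two stages: first, the generation statement for each compact connected Lie group $G_\ell$, and second, the invariance of $H_{\textup{sub},5}$ under the resulting $G_\ell$-action.

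For the generation statement, the key observation is that each $\mathfrak{g}_\ell$, $\ell=1,\dots,5$, is by definition the $\mathbb{R}$-linear span of its generators $A_iA_j$ (and $A_6A_7$ in the case $\ell=5$). As noted in the excerpt just before Proposition \ref{prop_onb} and again before this lemma, the set of these generators is closed up to sign under the commutator, so the Lie hull equals the linear span and $\mathfrak{g}_\ell$ is indeed a Lie subalgebra of $\mathfrak{so}(8)$; being a subalgebra of $\mathfrak{so}(8)$, it integrates to a connected Lie subgroup $G_\ell \subset SO(8)$, which is compact since $SO(8)$ is. Now I would invoke the standard fact that a connected Lie group is generated (as an abstract group) by $\exp(\mathfrak{g}_\ell)$, together with the fact that $\exp$ of a linear span is generated by the one-parameter subgroups through a spanning set: concretely, for $X = \sum_k t_k Y_k$ with $Y_k$ ranging over the generators, the Trotter product formula $\exp(X) = \lim_{n\to\infty}\big(\prod_k \exp(t_k Y_k/n)\big)^n$ exhibits $\exp(X)$ as a limit of products of elements $\exp(sA_iA_j)$; combined with the fact that the subgroup generated by all $\exp(sA_iA_j)$ is closed (it contains a neighbourhood of the identity in $G_\ell$, being a connected subgroup of full dimension, hence equals $G_\ell$ by connectedness), this gives that $G_\ell$ coincides with the group generated by $\{\exp(tA_iA_j) : A_iA_j \text{ a generator of } \mathfrak{g}_\ell,\ t\in\mathbb{R}\}$. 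Alternatively, since the generators themselves are (up to sign) closed under bracket, one can argue more directly that the closure of the generated subgroup is a connected Lie subgroup whose Lie algebra contains all generators, hence all of $\mathfrak{g}_\ell$, so it is $G_\ell$ itself.

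For the invariance statement, once $G_\ell$ is generated by the elements $g_{ij}(t) = \exp(tA_iA_j)$ with $A_iA_j$ a generator of $\mathfrak{g}_\ell$, it suffices to check invariance of $H_{\textup{sub},5}$ under each such $g_{ij}(t)$, since the set of transformations leaving a given function invariant is a group. For $\ell \le 5$ every generator $A_iA_j$ has either $1\leq i<j\leq 5$ or $(i,j)=(6,7)$, so these are exactly the transformations covered by Lemma \ref{Lemma_invariance_Hamiltonian}, which gives $H_{\textup{sub},5}(g_{ij}(t)q, g_{ij}(t)\xi) = H_{\textup{sub},5}(q,\xi)$ for all $(q,\xi)\in\mathbb{R}^8\times\mathbb{R}^8$. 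Since any element of $G_\ell$ is a finite product of such $g_{ij}(t)$, iterating gives invariance under all of $G_\ell$.

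I expect the main obstacle to be the first stage: passing cleanly from "$\mathfrak{g}_\ell$ is spanned by the $A_iA_j$" to "$G_\ell$ is generated by the one-parameter subgroups $\exp(tA_iA_j)$" without hand-waving. The cheapest rigorous route is to note that the subgroup $H_\ell$ generated by all $\exp(tA_iA_j)$ is path-connected and contains, for each generator, a full one-parameter subgroup; hence its closure $\overline{H_\ell}$ is a connected Lie subgroup of $SO(8)$ whose Lie algebra contains every generator $A_iA_j$, therefore contains $\mathfrak{g}_\ell$, therefore equals $\mathfrak{g}_\ell$ (the span cannot be exceeded since, as observed, it is already a subalgebra and the generators lie in $\mathfrak{g}_\ell$ — more precisely $\overline{H_\ell}\subset G_\ell$ because each $\exp(tA_iA_j)\in G_\ell$ and $G_\ell$ is closed, being compact). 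Thus $\overline{H_\ell} = G_\ell$; and since $G_\ell$ is a compact connected Lie group, $\exp:\mathfrak{g}_\ell\to G_\ell$ is onto by \cite[Corollary 4.48]{knapp_2002}, and the Trotter formula expresses each $\exp(X)$, $X\in\mathfrak{g}_\ell$, as a limit — but in fact once $\overline{H_\ell}=G_\ell$ and $H_\ell$ contains a neighbourhood of the identity (a connected subgroup of the same dimension as $G_\ell$ is open in $G_\ell$), connectedness of $G_\ell$ forces $H_\ell = G_\ell$ outright, so no limiting argument is needed in the end. The invariance stage is then immediate from Lemma \ref{Lemma_invariance_Hamiltonian} and presents no difficulty.
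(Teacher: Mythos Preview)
Your proposal is correct and overlaps with the paper's approach: both invoke the Trotter product formula together with Lemma \ref{Lemma_invariance_Hamiltonian}. The paper's proof is extremely terse---it simply says the statement follows ``inductively'' from Trotter plus Lemma \ref{Lemma_invariance_Hamiltonian}---and in effect establishes the invariance claim by writing each $\exp(X)$, $X\in\mathfrak{g}_\ell$, as a limit of products of the $g_{ij}(t)$ and passing the invariance through the limit by continuity of $H_{\textup{sub},5}$. You go further: you observe that Trotter alone yields only density of the generated subgroup $H_\ell$ in $G_\ell$, and you supply the missing step for the \emph{generation} claim proper by arguing that $H_\ell$ contains a neighbourhood of the identity (via the local diffeomorphism $(t_1,\dots,t_m)\mapsto \prod_k\exp(t_kA_{i_k}A_{j_k})$ or, equivalently, the dimension argument), hence equals the connected group $G_\ell$. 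This is a genuine improvement in rigour over the paper's sketch; the paper's argument suffices for the invariance conclusion (which is what is actually used downstream), but your route also nails down the generation statement without appeal to limits.
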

\begin{proof}
Inductively, the statement follows from the Trotter product formula: 
\begin{equation*}
\exp\{A+A^{\prime}\}= \lim_{n \rightarrow \infty} \Big{(} \exp\Big{\{} \frac{A}{n}\Big{\}} \exp \Big{\{} \frac{A^{\prime}}{n}\Big{\}} \Big{)}^n, \hspace{4ex} A,A^{\prime}
\in \mathbb{R}^{8 \times 8}, 
\end{equation*}
together with Lemma \ref{Lemma_invariance_Hamiltonian}. 
\end{proof}
\begin{cor}\label{cor_g_5}
With our previous notation we obtain a chain of subgroups: 
\begin{equation*}
G_1 \subset G_2 \subset G_3 \subset G_4 \subset G_5 \subset \textup{Iso}_{\textup{sub}}(\mathbb{S}_{\textup{T},5}^7). 
\end{equation*}
In particular, $\dim \textup{Iso}_{\textup{sub}}(\mathbb{S}_{\textup{T},5}^7)\geq \dim G_5=11$. 
The group $G_5$ and (hence $\textup{Iso}_{\textup{sub}}(\mathbb{S}_{\textup{T},5}^7)$ as well) act transitively on $\mathbb{S}^7$. 
\end{cor}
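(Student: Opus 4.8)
The statement is a corollary, so the plan is essentially to extract everything from the preceding lemmas. First I would exponentiate the nested non-degenerate subalgebras $\mathfrak{g}_1\subset\cdots\subset\mathfrak{g}_5$ of \eqref{chain_subalgebras_1} to obtain connected compact subgroups $G_1\subset G_2\subset G_3\subset G_4\subset G_5\subset O(8)$. For $A\in\mathfrak{g}_\ell$ the canonical vector field $X(A)$ has flow $t\mapsto e^{tA}\in G_\ell$, and by Lemma~\ref{Lemma_invariance_G_ell} the function $t\mapsto H_{\textup{sub},5}(e^{-tA}q,e^{-tA}\xi)$ is constant; since $f_{X(A)}=F_A$, this yields $\{H_{\textup{sub},5},f_{X(A)}\}=0$, so $X(A)$ is a Killing vector field of $\mathbb{S}_{\textup{T},5}^7$ by Lemma~\ref{lem_hamiltonian_killing}. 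Hence $e^{tA}\in\textup{Iso}_{\textup{sub}}(\mathbb{S}_{\textup{T},5}^7)$ for all $A\in\mathfrak{g}_\ell$, $t\in\mathbb{R}$; as $G_\ell$ is connected it is generated by such elements, so $G_\ell\subset\textup{Iso}_{\textup{sub}}(\mathbb{S}_{\textup{T},5}^7)$, which is the asserted chain. (Equivalently: an $a\in O(8)$ fixing the fibrewise quadratic form $H_{\textup{sub},5}$ preserves its radical, which is the annihilator of $\mathcal{D}_5$, hence preserves $\mathcal{D}_5$ and, being a Riemannian isometry, the restricted metric $g=g_{\textup{Riem}}|_{\mathcal{D}_5}$; so $a$ is an SR isometry.)

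For the dimension bound, note $\mathfrak{g}_5\cong\mathfrak{so}(5)\oplus\mathfrak{so}(2)$, so $\dim G_5=\dim\mathfrak{g}_5=10+1=11$. By Lemma~\ref{Lemma-intersection-iso-O-8} the group $\textup{Iso}_{\textup{sub}}(\mathbb{S}_{\textup{T},5}^7)\cap O(8)$ is a Lie group, and it contains $G_5$; therefore $\dim\textup{Iso}_{\textup{sub}}(\mathbb{S}_{\textup{T},5}^7)\geq\dim\big(\textup{Iso}_{\textup{sub}}(\mathbb{S}_{\textup{T},5}^7)\cap O(8)\big)\geq\dim G_5=11$.

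To prove transitivity it suffices to show that the subgroup $G_4\subset G_5$ with Lie algebra $\mathfrak{g}_4=\mathrm{span}_{\mathbb{R}}\{A_iA_j\mid 1\leq i<j\leq 5\}\cong\mathfrak{so}(5)$ already acts transitively. I would consider the $G_4$-orbit through the north pole $q_0=(1,0)\in\mathbb{H}\times\mathbb{H}$: it is a compact submanifold of $\mathbb{S}^7$ with tangent space $\mathrm{span}\{A_iA_jq_0\mid 1\leq i<j\leq 5\}$ at $q_0$. Using the explicit matrices \eqref{Definition-A-j-for-j-5-7-section-1}, together with the facts that $A_1q_0,\ldots,A_4q_0$ form an orthonormal basis of $\{0\}\times\mathbb{H}$ and $A_5q_0,A_6q_0,A_7q_0$ one of $\mathrm{Im}\,\mathbb{H}\times\{0\}$, one checks that $\{A_iA_5q_0\mid i=1,\ldots,4\}$ spans $\{0\}\times\mathbb{H}$ and $\{A_iA_4q_0\mid i=1,2,3\}$ spans $\mathrm{Im}\,\mathbb{H}\times\{0\}$, so the tangent space above is all of $T_{q_0}\mathbb{S}^7$. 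Hence the orbit is open; being also compact, hence closed, and $\mathbb{S}^7$ connected, the orbit equals $\mathbb{S}^7$. Thus $G_5$, and therefore $\textup{Iso}_{\textup{sub}}(\mathbb{S}_{\textup{T},5}^7)$, acts transitively on $\mathbb{S}^7$. (The conceptual reason is that $G_4$ is the image of the spin representation $\mathrm{Spin}(5)\cong Sp(2)\hookrightarrow SO(8)$ on $\mathbb{H}^2$, whose action on $\mathbb{S}^7$ is classically transitive; the explicit computation is only needed to keep the proof self-contained.)

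Since this is a corollary, the conceptual content is light: the inclusions and the dimension count are bookkeeping on top of Lemmas~\ref{Lemma_invariance_G_ell}, \ref{lem_hamiltonian_killing} and \ref{Lemma-intersection-iso-O-8}, and the only genuinely computational point is the spanning statement in the transitivity argument. I therefore expect the main --- and rather mild --- obstacle to be organizing that linear-algebra check cleanly, or, if one prefers the conceptual route, correctly identifying $\mathfrak{g}_4$ with $\mathfrak{spin}(5)$ and invoking the classification of transitive Lie group actions on spheres.
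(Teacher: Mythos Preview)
Your proof is correct but organised a bit differently from the paper's. For the inclusion $G_5\subset\textup{Iso}_{\textup{sub}}(\mathbb{S}_{\textup{T},5}^7)$ the paper argues directly from the definition of an SR isometry: it checks that each generator $e^{tA_iA_j}$ preserves the horizontal distribution (using that $A_6A_7$ commutes with $A_1,\ldots,A_5$, and that $e^{tA_iA_j}$ for $i,j\leq 5$ preserves the vertical space $\mathrm{span}\{A_6q,A_7q\}$), and then uses the invariance of $H_{\textup{sub},5}$ to conclude that the restricted metric is preserved. Your route through Lemma~\ref{lem_hamiltonian_killing} is a clean shortcut that packages both conditions at once; your parenthetical remark about the radical of the quadratic form is essentially the paper's argument in compressed form. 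For transitivity the paper simply cites an external result (\cite[Corollary~3.6]{BLT}), whereas you supply a self-contained argument via the tangent space of the $G_4$-orbit at the north pole; your identification of $G_4$ with the image of $\mathrm{Spin}(5)\cong Sp(2)$ is the conceptual explanation behind that citation. So your version trades the paper's hands-on verification plus external reference for a more uniform use of the Hamiltonian framework and an explicit orbit computation.
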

\begin{proof}
We show that $G_5$ is a subgroup of $\textup{Iso}_{\textup{sub}}(\mathbb{S}_{\textup{T},5}^7)$. Since $A_6A_7$ commutes with $A_{\ell}$ where $\ell=1, \ldots, 5$ we conclude that {the differential $\left(de^{tA_6A_7}\right)_q=e^{tA_6A_7}: T_q\mathbb{S}^7\rightarrow T_{e^{tA_6A_7}q}\mathbb{S}^7$} maps $\mathcal{H}_q$ to $\mathcal{H}_{e^{tA_6A_7}q}$ for all $t \in \mathbb{R}$. Similarly, $e^{tA_iA_j}$ for $i,j=1,\ldots, 5$  acts isometrically between 
vertical spaces: 
\begin{equation*}
e^{tA_iA_j}: V_q= \textup{span} \big{\{} A_6q,A_7q\big{\}}= \mathcal{H}_q^{\perp} \rightarrow V_{e^{tA_iA_j}q} = \mathcal{H}_{e^{tA_iA_j}q}^{\perp} 
\end{equation*}
and therefore $e^{tA_iA_j}$ maps $\mathcal{H}_q$ to $\mathcal{H}_{e^{tA_iA_j}q}$, as well. According to Lemma \ref{Lemma_invariance_G_ell} this property generalizes to all 
elements in $G_5$. Let $q \in \mathbb{S}^7$, $a \in G_5$ and $\xi \in \mathcal{H}_q$, then: 
\begin{equation*}
g_{aq} \left(\mathsf{d}a_q \xi, \mathsf{d}a_q \xi \right)= 2H_{\textup{sub},5}(aq,a\xi)= 2H_{\textup{sub},5}(q,\xi)= g_q( \xi,\xi ). 
\end{equation*} 
The last statement is Corollay 3.6. in \cite{BLT}. 
\end{proof}
The following lemma is useful in subsequent computations. 
\begin{lem}\label{com_rel_ham_f_a}
For $A, B_1, \ldots, B_m\in \mathfrak{so}(8)$, $m\in\mathbb{N}$, we have 
\[
\left\{H,F_{A}\right\}=\sum_{\ell=1}^mF_{B_{\ell}}F_{[B_{\ell},A]}, 
\]
where $H:=\displaystyle\dfrac{1}{2}\sum_{\ell=1}^mF_{B_{\ell}}^2$. 
\end{lem}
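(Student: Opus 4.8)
The plan is to compute the Poisson bracket $\{H, F_A\}$ directly from the defining relations, using bilinearity of $\{\cdot,\cdot\}$ in each argument together with the Leibniz rule. Since $H = \frac{1}{2}\sum_{\ell=1}^m F_{B_\ell}^2$, the Leibniz rule gives
\[
\{H, F_A\} = \frac{1}{2}\sum_{\ell=1}^m \{F_{B_\ell}^2, F_A\} = \sum_{\ell=1}^m F_{B_\ell}\,\{F_{B_\ell}, F_A\}.
\]
Then I would invoke the fundamental relation $\{F_\eta, F_{\eta'}\} = F_{[\eta,\eta']}$ for $\eta, \eta' \in \mathfrak{g}$, which was recorded earlier in Section \ref{Section-4} (it expresses that the momentum mapping $P$ is equivariant / a Poisson map). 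Applying it with $\eta = B_\ell$ and $\eta' = A$ yields $\{F_{B_\ell}, F_A\} = F_{[B_\ell, A]}$, and substituting this in completes the proof. This is essentially a one-line computation once the bracket relation $\{F_\eta, F_{\eta'}\} = F_{[\eta,\eta']}$ is in place.

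One small technical point I would address is the sign convention. The function $F_\eta$ is defined in \eqref{Lie_alg_induced_vf} via the flow of $-\eta$, and the momentum map is $P(q,\xi)(\eta) = \langle \eta q, \xi\rangle$; with these conventions the identity $\{F_\eta, F_{\eta'}\} = F_{[\eta,\eta']}$ holds as stated in the text (not with an extra minus sign), so the formula in the lemma comes out with the plus sign shown. I would simply cite the relation as it appears in the excerpt rather than rederive it, so no sign bookkeeping is actually needed in the written proof.

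I do not expect any genuine obstacle here: the statement is a formal consequence of the Leibniz rule and the homomorphism property of $\eta \mapsto F_\eta$ from $(\mathfrak{so}(8), [\cdot,\cdot])$ to $(\mathcal{C}^\infty(T\mathbb{S}^7), \{\cdot,\cdot\})$. The only thing to be careful about is that $H$ and $F_A$ are here viewed as functions on $T\mathbb{S}^7 \cong T^*\mathbb{S}^7$ obtained by pulling back along $P$, so that the bracket in question is the canonical Poisson bracket on the cotangent bundle; since $P$ is a Poisson map, brackets of pulled-back functions are computed by the Lie–Poisson bracket on $\mathfrak{so}(8)^* \cong \mathfrak{so}(8)$, which is exactly what makes $\{F_\eta, F_{\eta'}\} = F_{[\eta,\eta']}$ applicable. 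Thus the proof is: expand via Leibniz, apply the homomorphism property termwise, and conclude.
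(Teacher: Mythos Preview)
Your proposal is correct and follows essentially the same approach as the paper: expand $\{H,F_A\}$ by bilinearity and the Leibniz rule, then apply the relation $\{F_{\eta},F_{\eta'}\}=F_{[\eta,\eta']}$ termwise. The paper's proof is precisely this one-line computation, without the additional commentary on sign conventions and the Poisson map property that you include.
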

\begin{proof}
By a straightforward computation, we have 
\[
\left\{H, F_A\right\}
=
\dfrac{1}{2}\sum_{\ell=1}^5\left\{F_{B_{\ell}}^2, F_A\right\}
=
\sum_{\ell=1}^5F_{B_{\ell}}\left\{F_{B_{\ell}}, F_A\right\}
=
\sum_{\ell=1}^5F_{B_{\ell}}F_{\left[B_{\ell}, A\right]}.
\]
\end{proof}
We further show the following characterization of the infinitesimal isometries in the orthogonal group. 
\begin{thm}
The Lie algebra of $\mathrm{Iso}_{\textup{sub}}\left(\mathbb{S}_{\textup{T},5}^7\right)\cap O(8)$ coincides with $\mathfrak{g}_5$ in \eqref{chain_subalgebras_1}. 
\end{thm}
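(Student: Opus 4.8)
The plan is to prove the two inclusions $\mathfrak{g}_5 \subseteq \operatorname{Lie}\left(\mathrm{Iso}_{\textup{sub}}(\mathbb{S}_{\textup{T},5}^7)\cap O(8)\right)$ and $\operatorname{Lie}\left(\mathrm{Iso}_{\textup{sub}}(\mathbb{S}_{\textup{T},5}^7)\cap O(8)\right) \subseteq \mathfrak{g}_5$ separately. The first inclusion is essentially already in hand: by Corollary \ref{cor_g_5} the connected group $G_5$ generated by the $\exp(tA_iA_j)$ (with $1\le i<j\le 5$ or $(i,j)=(6,7)$) sits inside $\mathrm{Iso}_{\textup{sub}}(\mathbb{S}_{\textup{T},5}^7)$, and clearly $G_5\subset O(8)$, so $\mathfrak{g}_5 = \operatorname{Lie}(G_5)$ is contained in the Lie algebra of $\mathrm{Iso}_{\textup{sub}}(\mathbb{S}_{\textup{T},5}^7)\cap O(8)$, which is a Lie group by Lemma \ref{Lemma-intersection-iso-O-8}.

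For the reverse inclusion, the key is Lemma \ref{lem_hamiltonian_killing} together with the identification $f_{X(A)}=F_A$ noted before it: a matrix $A\in\mathfrak{so}(8)$ lies in the Lie algebra of $\mathrm{Iso}_{\textup{sub}}(\mathbb{S}_{\textup{T},5}^7)\cap O(8)$ if and only if the canonical vector field $X(A)$ is Killing, i.e. if and only if $\{H_{\textup{sub},5}, F_A\}=0$ on $T^*\mathbb{S}^7$. So I would take an arbitrary $A = \sum_{1\le i<j\le 7}a_{ij}A_iA_j + \sum_{k=1}^7 b_k A_k \in \mathfrak{so}(8)$ (using the orthonormal basis from Proposition \ref{prop_onb}) and compute the Poisson bracket explicitly. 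Writing $H_{\textup{sub},5}=\frac12\sum_{\ell=1}^5 F_{A_\ell}^2$, Lemma \ref{com_rel_ham_f_a} gives
\[
\{H_{\textup{sub},5}, F_A\} = \sum_{\ell=1}^5 F_{A_\ell}\, F_{[A_\ell,A]}.
\]
The plan is then to expand $[A_\ell, A]$ using the Clifford relations \eqref{GL-Matrix-Clifford-relations} and to argue that the resulting expression, viewed as a polynomial in the $F_{A_k}$ and $F_{A_iA_j}$ (which are the coordinate functions $\langle A_k q,\xi\rangle$, $\langle A_iA_j q,\xi\rangle$ pulled back via $P$), vanishes identically if and only if all "forbidden" coefficients of $A$ vanish — that is, if and only if $a_{ij}=0$ whenever exactly one of $i,j$ lies in $\{6,7\}$, and $b_k=0$ for all $k$. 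Those are precisely the conditions cutting out $\mathfrak{g}_5$.

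The main obstacle is the bookkeeping in that last step: one has to show that the monomials $F_{A_\ell}F_{[A_\ell,A]}$ produced by the various forbidden generators cannot cancel against each other. The cleanest way is to evaluate $\{H_{\textup{sub},5},F_A\}$ at judiciously chosen points $(q,\xi)\in T\mathbb{S}^7$ — e.g. the north pole $q=(1,0)\in\mathbb{H}\times\mathbb{H}$ with various choices of $\xi$ among the $A_k q$ — to isolate each coefficient, exactly in the spirit of the point-testing used in the proof of the Proposition after Theorem \ref{Theorem_bracket_generating_condition_trivializable}. Since $\{A_k q, A_iA_j q : \text{all indices}\}$ spans $\mathbb{R}^8$ densely enough as $q,\xi$ vary, functional independence of the coordinate functions forces each coefficient in the bracket to vanish; running this for each forbidden generator in turn shows $A\in\mathfrak{g}_5$. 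Combined with the first inclusion and the fact that both sides are Lie subalgebras of $\mathfrak{so}(8)$, this yields the claimed equality.
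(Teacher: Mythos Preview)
Your overall strategy matches the paper's: prove the two inclusions separately, use Corollary~\ref{cor_g_5} for $\mathfrak{g}_5\subseteq\operatorname{Lie}\bigl(\mathrm{Iso}_{\textup{sub}}(\mathbb{S}_{\textup{T},5}^7)\cap O(8)\bigr)$, and for the reverse direction characterize Killing fields $X(A)$ via $\{H_{\textup{sub},5},F_A\}=0$ (Lemma~\ref{lem_hamiltonian_killing} and Lemma~\ref{com_rel_ham_f_a}), then kill the unwanted coefficients by evaluating at well-chosen $(q,\xi)$. So the plan is sound.

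Two points where the paper's execution differs from yours, and where your sketch is thin enough that it would need real work to close:

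\textbf{(i) Reduction from five terms to two.} You propose to work with $\sum_{\ell=1}^5 F_{A_\ell}F_{[A_\ell,A]}$. The paper instead writes $H_{\textup{sub},5}=\tfrac12\|\xi\|^2-\tfrac12(F_{A_6}^2+F_{A_7}^2)$ and uses that $\|\xi\|^2$ Poisson-commutes with every $F_A$; the Killing condition then becomes the much shorter $F_{A_6}F_{[A_6,A]}+F_{A_7}F_{[A_7,A]}=0$. This cuts the bookkeeping drastically and is what makes the subsequent point-testing manageable.

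\textbf{(ii) Choice of test points.} Your suggestion of the north pole with $\xi$ ranging over the $A_kq$ is not what the paper does, and it is not clear it suffices: at a fixed $q$ the bracket is quadratic in $\xi$, and a handful of linear evaluations will not separate all coefficients. The paper first specializes $\xi=A_\ell A_6 q$ (killing one of the two summands), reduces to a polynomial identity in $q$ alone, and then evaluates at \emph{simultaneous eigenvectors} $q_\pm$ of the commuting symmetric involutions $A_mA_\ell A_6A_7$ and $A_nA_\ell A_6$. This eigenvector trick is the step that actually isolates each $\alpha_m$ and $\beta_{n7}$ individually; your ``functional independence'' heuristic would need to be replaced by something of this kind.
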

\begin{proof}
For the proof, we show the following: 
\begin{enumerate}
\item If $A\in \mathfrak{g}_5$, then $\left\{H_{\textup{sub},5}, F_A\right\}=0$. 
\item If $A\in \mathfrak{g}_5^{\perp}\subset \mathfrak{so}(8)$ and $\left\{H_{\textup{sub},5}, F_A\right\}=0$, then $A=0$. 
\end{enumerate}
\noindent (1)\; 
Let \begin{equation}\label{infinitesimal_isometry}
A
=
\sum_{1\leq j<k\leq 5}\beta_{jk}A_jA_k+\beta_{67}A_6A_7, 
\quad \beta_{jk}\in \mathbb{R}, 
\end{equation}
in $\mathfrak{g}_5$. 
Then, by Lemma \ref{com_rel_ham_f_a}, we have 
\[
\left\{H_{\textup{sub},5}, F_A\right\}
=
\sum_{\ell=1}^5f_{A_{\ell}}f_{\left[A_{\ell}, A\right]}
=
-2f_{A_j}f_{A_k}+2f_{A_k}f_{A_j}
=
0.
\]
Here we have used $\left[A_j, A_jA_k\right]=-2A_k$, $\left[A_k, A_jA_k\right]=2A_j$ for $1\leq j<k\leq 5$. 
Therefore, by Lemma \ref{lem_hamiltonian_killing}, the vector field $X(A)$ for $A$ as in \eqref{infinitesimal_isometry} is a Killing vector field. 

\noindent (2)\; 
Suppose that $$A=\displaystyle \sum_{i=1}^7\alpha_iA_i+\sum_{j=1}^5\left(\beta_{j6}A_jA_6+\beta_{j7}A_jA_7\right)\in\mathfrak{g}_5^{\perp}$$ satisfies $\left\{H_{\textup{sub},5}, F_A\right\}=0$. 
Since the function $\|\xi\|^2$ Poisson commutes with $F_A$ and $F_{A_6}^2+F_{A_7}^2=\|\xi\|^2-2H_{\textup{sub},5}$, the condition $\left\{H_{\textup{sub},5}, F_A\right\}=0$ is equivalent to $F_{A_6}F_{\left[A_6,A\right]}+F_{A_7}F_{\left[A_7,A\right]}=0$. 
By 
\begin{align*}
\left[A_\ell, A\right]
=
-2\sum_{i=1,\ldots,7,i\neq \ell}\alpha_iA_iA_{\ell}+2\sum_{j=1}^5\beta_{j\ell}A_j, 
\end{align*}
$\ell=6,7$, we have, for $\forall (q,\xi)\in T\mathbb{S}^7\subset \mathbb{R}^8\times \mathbb{R}^8$, 
\begin{align}\label{condition_reduced_1}
&\left\langle A_6q, \xi\right\rangle \cdot \left(\sum_{i=1,\ldots, 7, i\neq 6}\alpha_i\left\langle A_iA_6q, \xi\right\rangle-\sum_{j=1}^5\beta_{j6}\left\langle A_jq,\xi\right\rangle\right) \notag \\
&\qquad +
\left\langle A_7q, \xi\right\rangle \cdot \left(\sum_{i=1}^6\alpha_i\left\langle A_iA_7q, \xi\right\rangle-\sum_{j=1}^5\beta_{j7}\left\langle A_jq,\xi\right\rangle\right)
=
0. 
\end{align}
Assume now that $\xi=A_{\ell}A_6q$ for a fixed $\ell=1, \ldots, 5$. 
Then, the first term in \eqref{condition_reduced_1} vanishes and we have 
\begin{align*}
\left\langle q, A_{\ell}A_6A_7q\right\rangle \cdot \left(\sum_{i=1, \ldots, 5, i\neq\ell}\alpha_i\left\langle q, A_iA_{\ell}A_6A_7q\right\rangle-\sum_{j=1, \ldots, 5, j\neq\ell}\beta_{j7}\left\langle q, A_jA_{\ell}A_6q\right\rangle\right)
=
0,
\end{align*}
as $\left\langle q, A_{\ell}^2A_6A_7q\right\rangle= -\left\langle q, A_6A_7q\right\rangle=0$, $\left\langle q, A_{\ell}^2A_6q\right\rangle=-\left\langle q, A_6 q\right\rangle=0$. 
Since the first factor $\left\langle q, A_{\ell}A_6A_7q\right\rangle$ is not zero for almost every $q\in\mathbb{S}^7$, we see that the second factor 
vanishes almost everywhere on $\mathbb{S}^7$. 
As it is also a homogeneous quadratic polynomial in $q$, it must be identically zero for all $q\in\mathbb{R}^8$:
\begin{equation}\label{condition_reduced_2}
\sum_{i=1, \ldots, 5, i\neq\ell}\alpha_i\left\langle q, A_iA_{\ell}A_6A_7q\right\rangle-\sum_{j=1, \ldots, 5, j\neq\ell}\beta_{j7}\left\langle q, A_jA_{\ell}A_6q\right\rangle=0. 
\end{equation}
Now, we fix distinct numbers $m,n\in\{1,\ldots, 5\}\setminus \{\ell\}$.  By straightforward computations, we have 
\[
\left(A_mA_{\ell}A_6A_7\right)^2=\mathsf{E}_8, \quad \left(A_nA_{\ell}A_6\right)^2=\mathsf{E}_8, \quad \left[A_mA_{\ell}A_6A_7, A_nA_{\ell}A_6\right]=0.
\]
Since $A_mA_{\ell}A_6A_7$ and $A_nA_{\ell}A_6$ both anti-commute with $A_7$, they are different from $\pm \mathsf{E}_8$. 
Thus, the commuting symmetric matrices $A_mA_{\ell}A_6A_7$ and $A_nA_{\ell}A_6$ both have eigenvalues $\pm 1$ and 
$\mathbb{R}^8$ is decomposed into the direct sum of the simultaneous eigenspaces.
Hence, there exist simultaneous eigenvectors $q_{\pm}=q_{\pm}^{\ell mn}\in\mathbb{S}^7$ of $A_mA_{\ell}A_6A_7$ and $A_nA_{\ell}A_6$ such that $A_mA_{\ell}A_6A_7 q_{\pm}= q_{\pm}$ and $A_nA_{\ell}A_6q_{\pm}=\pm q_{\pm}$. 
In fact, we can take $q\in \mathbb{S}^7$ such that $A_mA_{\ell}A_6A_7 q= q$ and $A_nA_{\ell}A_6q=\epsilon q$, where either $\epsilon =1$ or $\epsilon =-1$. 
Then, 
\begin{align*}
A_mA_{\ell}A_6A_7\left(A_6A_7q\right)
&=
A_6A_7\left(A_mA_{\ell}A_6A_7q\right)
=
A_6A_7q, 
\\
A_nA_{\ell}A_6 \left(A_6A_7q\right)
&=
-A_6A_7\left(A_nA_{\ell}A_6q\right)
=
-\epsilon A_6A_7q, 
\end{align*}
and hence we can set $q_+=q$, $q_-=A_6A_7q$ or $q_+=A_6A_7 q$, $q_-=q$, respectively, depending on whether $\epsilon=1$ or $\epsilon=-1$. 
Now, for $i\in \left\{1, \ldots, 5\right\}$ such that $i\neq m$, $i\neq \ell$, we have 
\[
A_iA_{\ell}A_6A_7q_{\pm}
=-A_iA_m\left(A_mA_{\ell}A_6A_7 q_{\pm}\right)
=-A_iA_mq_{\pm} 
\]
and hence $\left\langle q_{\pm}, A_iA_{\ell}A_6A_7q_{\pm}\right\rangle =-\left\langle q_{\pm}, A_iA_mq_{\pm}\right\rangle=0$. 
Further, for $j\in \left\{1, \ldots, 5\right\}$ such that $j\neq n$, $j\neq \ell$, we have 
\[
A_jA_{\ell}A_6q_{\pm}
=
-A_jA_n\left(A_nA_{\ell}A_6q_{\pm}\right)
=
\mp A_jA_nq_{\pm} 
\]
and hence $\left\langle q_{\pm}, A_jA_{\ell}A_6q_{\pm}\right\rangle =\mp \left\langle q_{\pm}, A_jA_nq_{\pm}\right\rangle=0$. 
Thus, from \eqref{condition_reduced_2}, taking both eigenvectors $q_{\pm}$, we have 
\[
\alpha_m\pm \beta_{n7}=0, 
\]
which means that $\alpha_m=\beta_{n7}=0$. 
Since $\ell=1, \ldots, 5$ is arbitrary, this holds for all $m, n=1, \ldots, 5$. 
Similarly, assuming $\xi=A_{\ell}A_7q$, $\ell=1, \ldots, 5$, we have $\beta_{n6}=0$, $n=1, \ldots, 5$. 
Now, we have $A=\alpha_6 A_6+\alpha_7 A_7$. 
By $\left\{H_{\mathrm{sub},5}, F_A\right\}=0$, we have 
\begin{align*}
0&=
\left\langle A_6q, \xi\right\rangle\alpha_6\left\langle A_7A_6q, \xi\right\rangle+\left\langle A_7q, \xi\right\rangle\alpha_7\left\langle A_6A_7q, \xi\right\rangle \\
&=
\left\langle A_6A_7q, \xi\right\rangle\left(\alpha_6\left\langle A_6q, \xi\right\rangle-\alpha_7\left\langle A_7q, \xi\right\rangle\right), \quad \forall \,(q, \xi)\in T\mathbb{S}^7. 
\end{align*}
As $\left\langle A_6A_7q, \xi\right\rangle\neq 0$ almost everywhere on $T\mathbb{S}^7$, we have $\alpha_6\left\langle A_6q, \xi\right\rangle-\alpha_7\left\langle A_7q, \xi\right\rangle=0$ almost everywhere. 
As this is a homogeneous polynomial in $(q,\xi)$, we see that $\alpha_6\left\langle A_6q, \xi\right\rangle-\alpha_7\left\langle A_7q, \xi\right\rangle=0$ identically on $T\mathbb{S}^7$. 
Setting $\xi=A_6q$, we have $\alpha_6=0$. 
Then, we have $\left\langle A_7q, \xi\right\rangle\alpha_7\left\langle A_6A_7q, \xi\right\rangle=0$, which means $\alpha_7=0$. 
Therefore,  we have $A=0$, which proves the theorem. 
\end{proof}
%%%%%%%%%%%%%%%%%%%%%%%%%%%%%%%%%%%%%%%%%%%%%%
\subsection{SR isometry group of $\mathbb{S}_{\textup{T},4}^7$}
%%%%%%%%%%%%%%%%%%%%%%%%%%%%%%%%%%%%%%%%%%%%%%
The SR isometry group $\mathrm{Iso}_{\textup{sub}}\left(\mathbb{S}_{\textup{T},4}^7\right)$ has been studied in \cite[Theorem 6.4]{BL}. 
It was observed that the action of $\mathrm{Iso}_{\textup{sub}}\left(\mathbb{S}_{\textup{T},4}^7\right)$ on $\mathbb{S}^7$ is not transitive. 
Nevertheless, we here give a description of the infinitesimal isometries in $O(8)$ and we present a lower bound of 
$\dim\left(\mathrm{Iso}_{\textup{sub}}\left(\mathbb{S}_{\textup{T},4}^7\right)\right)$. 
{
%%%%%%%%%%%%%%%%%%%%%%%%%%%%%%%%%%%%%%%%%%%%%%
\begin{thm}
The Lie algebra of $\mathrm{Iso}_{\textup{sub}}\left(\mathbb{S}_{\textup{T},4}^7\right)\cap O(8)$ coincides with $\mathfrak{h}_5$ in \eqref{Lie-subalgebras-rank-4-case}. 
In particular, we have $\dim \left(\mathrm{Iso}_{\textup{sub}}\left(\mathbb{S}_{\textup{T},4}^7\right)\right)\geq \dim\mathfrak{h}_5=9$. 
\end{thm}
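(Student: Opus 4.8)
The plan is to first identify the Lie algebra of $\mathrm{Iso}_{\textup{sub}}(\mathbb{S}_{\textup{T},4}^7)\cap O(8)$ with the subspace $\mathcal{I}:=\{A\in\mathfrak{so}(8)\mid \{H_{\textup{sub},4},F_A\}=0\}$. By Lemma~\ref{Lemma-intersection-iso-O-8} the intersection is a Lie subgroup of $O(8)$, and for $A\in\mathfrak{so}(8)$ the one-parameter group $e^{tA}\subset O(8)$ acts by SR isometries of $\mathbb{S}_{\textup{T},4}^7$ exactly when the (complete) canonical vector field $X(A)$ is a Killing vector field, i.e. when $\{H_{\textup{sub},4},f_{X(A)}\}=\{H_{\textup{sub},4},F_A\}=0$ by Lemma~\ref{lem_hamiltonian_killing}. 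Hence the Lie algebra of the intersection is precisely $\mathcal{I}$, and it remains to prove $\mathcal{I}=\mathfrak{h}_5$, which I would establish by two inclusions, mimicking the proof given for $\mathbb{S}_{\textup{T},5}^7$.

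For $\mathfrak{h}_5\subseteq\mathcal{I}$: writing $A=\sum_{1\le i<j\le 4}\beta_{ij}A_iA_j+\sum_{5\le i<j\le 7}\gamma_{ij}A_iA_j$ and using $H_{\textup{sub},4}=\frac{1}{2}\sum_{\ell=1}^4 F_{A_\ell}^2$ together with Lemma~\ref{com_rel_ham_f_a}, one gets $\{H_{\textup{sub},4},F_A\}=\sum_{\ell=1}^4 F_{A_\ell}F_{[A_\ell,A]}$; the Clifford relations \eqref{GL-Matrix-Clifford-relations} give $[A_\ell,A_iA_j]=0$ whenever $\ell\le 4$ and $\{i,j\}\subset\{5,6,7\}$, while for $\{i,j\}\subset\{1,\dots,4\}$ the brackets are the usual $\pm 2A_{(\cdot)}$, so all contributions cancel pairwise exactly as in part (1) of the corresponding theorem for $\mathbb{S}_{\textup{T},5}^7$; hence $\{H_{\textup{sub},4},F_A\}=0$. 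Combined with Lemma~\ref{Lemma-intersection-iso-O-8} this already yields $\dim\mathrm{Iso}_{\textup{sub}}(\mathbb{S}_{\textup{T},4}^7)\ge\dim\big(\mathrm{Iso}_{\textup{sub}}(\mathbb{S}_{\textup{T},4}^7)\cap O(8)\big)\ge\dim\mathfrak{h}_5=9$. For the reverse inclusion $\mathcal{I}\subseteq\mathfrak{h}_5$, take $A\in\mathcal{I}$; by the inclusion just shown its $\mathfrak{h}_5$-component lies in $\mathcal{I}$, so we may assume $A\in\mathfrak{h}_5^{\perp}$, i.e.
\begin{equation*}
A=\sum_{i=1}^7\alpha_i A_i+\sum_{i=1}^4\sum_{j=5}^7\beta_{ij}A_iA_j ,
\end{equation*}
and we must prove $A=0$. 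Since $\|\xi\|^2$ is $O(8)$-invariant it Poisson-commutes with $F_A$, and $H_{\textup{sub},4}=\frac{1}{2}\|\xi\|^2-\frac{1}{2}\sum_{k=5}^7 F_{A_k}^2$, so the condition $\{H_{\textup{sub},4},F_A\}=0$ is equivalent to $\sum_{k=5}^7 F_{A_k}F_{[A_k,A]}=0$ on $T\mathbb{S}^7$; computing $[A_k,A]=2\big(\sum_{i\ne k}\alpha_i A_kA_i+\sum_{i=1}^4\beta_{ik}A_i\big)$ for $k=5,6,7$ from \eqref{GL-Matrix-Clifford-relations} turns this into a single homogeneous quartic polynomial identity in $(q,\xi)\in T\mathbb{S}^7\subset\mathbb{R}^8\times\mathbb{R}^8$.

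To extract the nineteen coefficients $\alpha_i,\beta_{ij}$ from this identity I would follow the pattern of the $\mathbb{S}_{\textup{T},5}^7$ argument: evaluating at covectors of the special form $\xi=A_kA_\ell q$ (with $k\in\{5,6,7\}$, $\ell\in\{1,\dots,4\}$) annihilates the $k$-th summand, since $\langle A_kq,A_kA_\ell q\rangle=0$, leaving a shorter relation; iterating this and then invoking the eigenvector trick — products $A_mA_\ell A_kA_r$ of four distinct generators are symmetric involutions, and commuting pairs of them admit simultaneous unit eigenvectors $q_{\pm}\in\mathbb{S}^7$ at which the remaining quadratic forms in $q$ are forced to vanish — one obtains $\alpha_i=0$ and $\beta_{ij}=0$ for all $i,j$. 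The bookkeeping can be lightened by exploiting the residual symmetry: the subgroup $G'\subset\mathrm{Iso}_{\textup{sub}}(\mathbb{S}_{\textup{T},4}^7)\cap O(8)$ generated by the $e^{tA_iA_j}$ with $1\le i<j\le 4$ and with $5\le i<j\le 7$ has Lie algebra $\mathfrak{h}_5$, so $\mathrm{Ad}_{G'}$ preserves $\mathfrak{h}_5$ and, by bi-invariance of $B$, also $\mathfrak{h}_5^{\perp}$; since $\mathrm{Ad}_{G'}$ acts as $SO(4)$ rotating $\{A_1,\dots,A_4\}$ and $SO(3)$ rotating $\{A_5,A_6,A_7\}$, one may normalize, say, $(\alpha_5,\alpha_6,\alpha_7)=(\alpha_5,0,0)$ and partially normalize $(\alpha_1,\dots,\alpha_4)$ and the matrix $(\beta_{ij})$ before running the case analysis. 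This completes $\mathcal{I}=\mathfrak{h}_5$, hence the theorem.

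The main obstacle is precisely this last coefficient extraction. In the rank-$5$ case the ``vertical'' complement $\mathrm{span}\{A_6,A_7\}$ is two-dimensional and $\mathfrak{g}_5^{\perp}$ from \eqref{chain_subalgebras_1} is comparatively rigid, whereas here the complement $\mathrm{span}\{A_5,A_6,A_7\}$ is three-dimensional, the key identity $\sum_{k=5}^7 F_{A_k}F_{[A_k,A]}=0$ carries three summands, and $\mathfrak{h}_5^{\perp}$ is nineteen-dimensional, so a more careful choice of test covectors is needed to disentangle all the coefficients; the $SO(4)\times SO(3)$ normalization above is, I expect, the decisive device for keeping that analysis finite.
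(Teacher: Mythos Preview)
Your overall architecture is correct and matches the paper's: reduce to $A\in\mathfrak{h}_5^{\perp}$, use the equivalent form $\sum_{k=5}^{7}F_{A_k}F_{[A_k,A]}=0$, then kill summands by judicious choices of $\xi$ and finish with the simultaneous-eigenvector trick. The difference lies in the specific test covectors. You propose $\xi=A_kA_\ell q$ with $k\in\{5,6,7\}$ and $\ell\in\{1,\dots,4\}$, mirroring the rank-$5$ argument; this kills only the $k$-th of the three summands, which is why you are led to introduce the $SO(4)\times SO(3)$ normalization as an extra device. The paper instead takes $\xi=A_6A_7q$, a product of \emph{two} vertical generators: then $\langle A_6q,\xi\rangle=\langle A_7q,\xi\rangle=0$, so two of the three summands vanish at once and one is left with a single relation from which the eigenvector trick (with the commuting involutions $A_\ell A_5A_6A_7$ and $A_mA_6A_7$, $\ell\neq m$ in $\{1,\dots,4\}$) immediately yields $\alpha_\ell=\beta_{m5}=0$; the cyclic symmetry of \eqref{rk_4_A_5A_6A_7} in $5,6,7$ then gives all $\beta_{jk}=0$. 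The remaining case $A=\alpha_5A_5+\alpha_6A_6+\alpha_7A_7$ is handled separately with $\xi=A_4A_5q$. Your $\mathrm{Ad}_{G'}$-normalization is a legitimate alternative bookkeeping device and would likely succeed, but the paper's two-vertical-index test vector is the cleaner shortcut and obviates it.
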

\begin{proof}
We show the following: 
\begin{enumerate}
\item If $A\in \mathfrak{h}_5$, then $\left\{H_{\textup{sub},4}, F_A\right\}=0$. 
\item If $A\in \mathfrak{h}_5^{\perp}\subset \mathfrak{so}(8)$ and $\left\{H_{\textup{sub},4}, F_A\right\}=0$, then $A=0$. 
\end{enumerate}

As $H_{\textup{sub},4}=\displaystyle \dfrac{1}{2}\sum_{i=1}^4F_{A_i}^2=\dfrac{1}{2}\left\|\xi\right\|^2-\dfrac{1}{2}\sum_{i=5}^7F_{A_i}^2$ and since $\left\|\xi\right\|^2$ 
Poisson commutes with $F_A$, the condition $\left\{H_{\textup{sub},4}, F_A\right\}=0$ is equivalent to 
\begin{equation}\label{reduced_poisson_commutativity_rk_4}
F_{A_5}F_{\left[A_5, A\right]}
+
F_{A_6}F_{\left[A_6, A\right]}
+
F_{A_7}F_{\left[A_7, A\right]}
=
0, 
\end{equation}
according to Lemma  \ref{com_rel_ham_f_a}. 

\noindent (1)\; 
Let $A\in\mathfrak{h}_5$ be expanded as:
\[
A=\displaystyle \sum_{1\leq j<k\leq 4}\beta_{jk}A_jA_k+\sum_{5\leq j<k\leq 7}\beta_{jk}A_jA_k,
\]
where $\beta_{jk}\in \mathbb{R}$. 
Then, we have 
\begin{align*}
\left[A_5, A\right]
=
-2\beta_{56}A_6-2\beta_{57}A_7, \,
\left[A_6, A\right]
=
2\beta_{56}A_5-2\beta_{67}A_7, \,
\left[A_7, A\right]
=
2\beta_{57}A_5+2\beta_{67}A_6, 
\end{align*}
and hence 
\begin{align*}
F_{A_5}F_{\left[A_5, A\right]}
+
F_{A_6}F_{\left[A_6, A\right]}
+
F_{A_7}F_{\left[A_7, A\right]}
&=
-2\beta_{56}F_{A_5}F_{A_6}-2\beta_{57}F_{A_5}F_{A_7}+2\beta_{56}F_{A_6}F_{A_5} \\
&\quad -2\beta_{67}F_{A_6}F_{A_7}+2\beta_{57}F_{A_7}F_{A_5}+2\beta_{67}F_{A_7}F_{A_6}
=0. 
\end{align*}
This proves (1). 

\noindent (2)\; Let $A$ be in $\mathfrak{h}_5^{\perp}$. 
Then $A$ can be expanded in the form: 
\[
A=\sum_{i=1}^7\alpha_i A_i+\sum_{1\leq j\leq 4, 5\leq k\leq 7}\beta_{jk}A_jA_k,
\]
where $\alpha_i, \beta_{jk}\in\mathbb{R}$. 
Then, we have 
\begin{align*}
\left[A_{\ell}, A\right]
=
-2\sum_{i=1, i\neq \ell}^7\alpha_iA_iA_{\ell}+2\sum_{1\leq j\leq 4}\beta_{j\ell}A_j, 
\end{align*}
where $\ell=5,6,7$, and hence \eqref{reduced_poisson_commutativity_rk_4} can be written as 
\begin{align}\label{rk_4_A_5A_6A_7}
%&
\sum_{\ell=5}^7\left\langle A_{\ell}q, \xi\right\rangle\cdot \left(\sum_{i=1,i\neq \ell}^7\alpha_i\left\langle A_iA_\ell q, \xi\right\rangle -\sum_{j=1}^4\beta_{j\ell}\left\langle A_jq, \xi\right\rangle\right)
=0. 
\end{align}
Now, we set $\xi=A_6A_7q$.
Using $\left\langle A_6q, \xi\right\rangle=\left\langle A_6q, A_6A_7q\right\rangle=\left\langle q, A_7q\right\rangle=0$ and $\left\langle A_7q, \xi\right\rangle=\left\langle A_7q, A_6A_7q\right\rangle=-\left\langle q, A_6q\right\rangle=0$, we have 
\[
\left\langle q, A_5A_6A_7 q\right\rangle\cdot \left(\sum_{i=1,i\neq 5}^7\alpha_i\left\langle q, A_iA_5A_6A_7 q\right\rangle -\sum_{j=1}^4\beta_{j5}\left\langle q, A_jA_6A_7q\right\rangle\right)=0.
\]
Since the first factor $\left\langle q, A_5A_6A_7 q\right\rangle$ is not zero for almost all $q\in \mathbb{S}^7$, the second factor vanishes for almost all $q\in \mathbb{S}^7$. 
As it is a homogeneous polynomial in $q$, we see that 
\begin{equation}\label{reduced_relation_for_isometry_rk_4}
\sum_{i=1,i\neq 5}^7\alpha_i\left\langle q, A_iA_5A_6A_7 q\right\rangle -\sum_{j=1}^4\beta_{j5}\left\langle q, A_jA_6A_7q\right\rangle=0
\end{equation}
for all $q\in \mathbb{R}^8$.  Fixing distinct numbers $\ell,m \in \{1,2,3,4\}$, we have the following: 
\[
\left(A_{\ell}A_5A_6A_7\right)^2=\mathsf{E}_8, \quad 
\left(A_mA_6A_7\right)^2=\mathsf{E}_8, \quad 
\left[A_{\ell}A_5A_6A_7, A_mA_6A_7\right]=0. 
\]
Therefore, we see that the symmetric matrices $A_{\ell}A_5A_6A_7$ and $A_mA_6A_7$ both have eigenvalues $\pm 1$ and that they induce simultaneous eigenspace decomposition of $\mathbb{R}^8$. 
Because of the commutation and the anti-commutation relations 
\begin{align*}
\left(A_{\ell}A_5A_6A_7\right)\cdot \left(A_5A_6\right)
&=
\left(A_5A_6\right)\cdot \left(A_{\ell}A_5A_6A_7\right), \\
\left(A_mA_6A_7\right)\cdot \left(A_5A_6\right)
&=
-\left(A_5A_6\right)\cdot \left(A_mA_6A_7\right),
\end{align*}
there exist $q_{\pm}=q_{\pm}^{\ell m}\in \mathbb{S}^7$ such that 
\[
A_{\ell}A_5A_6A_7q_{\pm}=q_{\pm}, \quad 
A_mA_6A_7q_{\pm}=\pm q_{\pm}. 
\]
For these $q_{\pm}$, \eqref{reduced_relation_for_isometry_rk_4} can be written as $\alpha_{\ell}\mp \beta_{m5}=0$ which means 
\[
\alpha_{\ell}=\beta_{m5}=0.
\]
As $\ell, m\in\{1,2,3,4\}$ are taken arbitrary with condition $\ell\neq m$, these relations hold for all $\ell, m\in\{1,2,3,4\}$. 
Due to the symmetry of \eqref{rk_4_A_5A_6A_7}, we have $\alpha_{\ell}=0$, $\ell=1, 2, 3, 4$, $\beta_{jk}=0$, $1\leq j\leq 4$, $5\leq k\leq 7$. 
We have $A=\alpha_5 A_5+\alpha_6 A_6+\alpha_7 A_7$. 
By $\left\{H_{\mathrm{sub},4}, F_A\right\}=0$, we have 
\begin{align}\label{condition_sub_4}
0&=
\left\langle A_6q, \xi\right\rangle\alpha_5\left\langle A_6A_5q, \xi\right\rangle+\left\langle A_7q, \xi\right\rangle\alpha_5\left\langle A_7A_5q, \xi\right\rangle \notag \\
&\quad +
\left\langle A_5q, \xi\right\rangle\alpha_6\left\langle A_5A_6q, \xi\right\rangle+\left\langle A_7q, \xi\right\rangle\alpha_6\left\langle A_7A_6q, \xi\right\rangle \notag \\
&\quad +
\left\langle A_5q, \xi\right\rangle\alpha_7\left\langle A_5A_7q, \xi\right\rangle+\left\langle A_6q, \xi\right\rangle\alpha_7\left\langle A_6A_7q, \xi\right\rangle, \quad \forall (q, \xi)\in T\mathbb{S}^7. 
\end{align}
Setting $\xi=A_4A_5q$, we have 
\[
\left\langle A_6 A_7q, A_4A_5q\right\rangle\left(\alpha_6\left\langle A_7q, A_4A_5q\right\rangle-\alpha_7\left\langle A_6q, A_4A_5q\right\rangle\right)=0. 
\]
Since $\left\langle A_6 A_7q, A_4A_5q\right\rangle\neq 0$ almost everywhere on $T\mathbb{S}^7$, we have $\alpha_6\left\langle A_7q, A_4A_5q\right\rangle-\alpha_7\left\langle A_6q, A_4A_5q\right\rangle=0$ almost everywhere. 
As it is a homogeneous polynomial in $(q,\xi)$, we see that $\alpha_6\left\langle A_7q, A_4A_5q\right\rangle-\alpha_7\left\langle A_6q, A_4A_5q\right\rangle=0$ everywhere on $T\mathbb{S}^7$. 
Now we assume that $q$ is an eigenvector of $A_4A_5A_7$. 
(Recall that $A_4A_5A_7$ has eigenvalues $\pm 1$.) 
Then, we have $\alpha_6=0$ and hence $\alpha_7=0$. 
Finally, we have $0=
\left\langle A_6q, \xi\right\rangle\alpha_5\left\langle A_6A_5q, \xi\right\rangle+\left\langle A_7q, \xi\right\rangle\alpha_5\left\langle A_7A_5q, \xi\right\rangle$ by \eqref{condition_sub_4}. 
By a similar argument, we have $\alpha_5=0$. 
Thus, we have $A=0$. 
\end{proof}
%%%%%%%%%%%%%%%%%%%%%%%%%%%%%%%%%%%%%%%%%%%%%%
\subsection{SR isometry group of $\mathbb{S}_{\textup{T},6}^7=\mathbb{S}_{\textup{H}}^7$}
%%%%%%%%%%%%%%%%%%%%%%%%%%%%%%%%%%%%%%%%%%%%%%
%%%%%%%%%%%%%%%%%%%%%%%%%%%%%%%%%%%%%%%%%%%%%%
We consider the SR isometry group $\mathrm{Iso}_{\textup{sub}}\left(\mathbb{S}_{\textup{T},6}^7\right)=\mathrm{Iso}_{\textup{sub}}\left(\mathbb{S}_{\textup{H}}^7\right)$. 
\begin{thm}\label{inf_iso_cork_one}
The Lie algebra of $\mathrm{Iso}_{\textup{sub}}\left(\mathbb{S}_{\textup{H}}^7\right)\cap O(8)$ coincides with $\mathfrak{k}_6$ in \eqref{Lie-subalgebras-rank-6-case}. 
In particular, we have $\dim\left(\mathrm{Iso}_{\textup{sub}}\left(\mathbb{S}_{\textup{H}}^7\right)\right)\geq \dim \mathfrak{k}_6=16$. 
\end{thm}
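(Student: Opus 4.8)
The plan is to follow the same two-step scheme as in the proofs for $\mathbb{S}_{\textup{T},5}^7$ and $\mathbb{S}_{\textup{T},4}^7$. By Lemma \ref{Lemma-intersection-iso-O-8} the group $\mathrm{Iso}_{\textup{sub}}(\mathbb{S}_{\textup{H}}^7)\cap O(8)$ is a Lie subgroup of $O(8)$, and by Lemma \ref{lem_hamiltonian_killing} (together with $f_{X(A)}=F_A$) its Lie algebra equals $\{A\in\mathfrak{so}(8)\mid\{H_{\textup{sub},6},F_A\}=0\}$; hence it suffices to show that (1) $\{H_{\textup{sub},6},F_A\}=0$ for every $A\in\mathfrak{k}_6$, and (2) if $A\in\mathfrak{k}_6^{\perp}\subset\mathfrak{so}(8)$ satisfies $\{H_{\textup{sub},6},F_A\}=0$, then $A=0$. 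The dimension count $\dim\mathfrak{k}_6=\binom{6}{2}+1=16$ together with the inclusion $\mathrm{Iso}_{\textup{sub}}(\mathbb{S}_{\textup{H}}^7)\cap O(8)\subset\mathrm{Iso}_{\textup{sub}}(\mathbb{S}_{\textup{H}}^7)$ then gives the stated lower bound.

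For (1) I would exploit that $A\mapsto\{H_{\textup{sub},6},F_A\}$ is linear, so it is enough to handle the generators of $\mathfrak{k}_6$. For $1\le j<k\le 6$ the matrix $A_jA_k$ commutes with $A_7$, hence $e^{tA_jA_k}\in SO(8)$ carries $\mathcal{V}_{\textup{H}}(q)=\mathrm{span}\{A_7q\}$ onto $\mathcal{V}_{\textup{H}}(e^{tA_jA_k}q)$; being a Riemannian isometry of $\mathbb{S}^7$, it then preserves $\mathcal{D}_{\textup{H}}=\mathcal{V}_{\textup{H}}^{\perp}$ and its metric, so it is an SR isometry of $\mathbb{S}_{\textup{H}}^7$ (the same is immediate for $e^{tA_7}$, which also commutes with $A_7$), and Lemma \ref{lem_hamiltonian_killing} yields $\{H_{\textup{sub},6},F_{A_jA_k}\}=0$ and $\{H_{\textup{sub},6},F_{A_7}\}=0$. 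Alternatively one expands $A=\sum_{1\le j<k\le6}\beta_{jk}A_jA_k+\alpha_7A_7$, applies Lemma \ref{com_rel_ham_f_a} to get $\{H_{\textup{sub},6},F_A\}=\sum_{\ell=1}^{6}F_{A_\ell}F_{[A_\ell,A]}$, notes that the contributions of the $A_jA_k$ cancel pairwise, and checks that the remaining term $2\alpha_7\sum_{\ell=1}^{6}F_{A_\ell}F_{A_\ell A_7}$ vanishes on $T\mathbb{S}^7$ by a short computation with the Clifford relations \eqref{GL-Matrix-Clifford-relations}.

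For (2) I would start from the decomposition $H_{\textup{sub},6}=\frac{1}{2}\|\xi\|^2-\frac{1}{2}F_{A_7}^2$; since $\|\xi\|^2$ Poisson commutes with every $F_A$ (being, up to a constant, the pull-back of the Casimir $X\mapsto B(X,X)$ of $\mathfrak{so}(8)$, as already used in the earlier proofs), the condition $\{H_{\textup{sub},6},F_A\}=0$ reduces to $F_{A_7}F_{[A_7,A]}=0$ on $T\mathbb{S}^7$. By Proposition \ref{prop_onb} one has $\mathfrak{k}_6^{\perp}=\mathrm{span}_{\mathbb{R}}\{A_1,\dots,A_6\}\oplus\mathrm{span}_{\mathbb{R}}\{A_iA_7\mid 1\le i\le 6\}$, so write $A=\sum_{i=1}^{6}\alpha_iA_i+\sum_{i=1}^{6}\gamma_iA_iA_7$; a direct computation gives $[A_7,A]=-2\sum_{i=1}^{6}\alpha_iA_iA_7+2\sum_{i=1}^{6}\gamma_iA_i$, whence the condition reads
\[
\langle A_7q,\xi\rangle\Bigl(\sum_{i=1}^{6}\gamma_i\langle A_iq,\xi\rangle-\sum_{i=1}^{6}\alpha_i\langle A_iA_7q,\xi\rangle\Bigr)=0,\qquad\forall\,(q,\xi)\in T\mathbb{S}^7.
\]
Since $\langle A_7q,\xi\rangle\neq 0$ on a dense open subset of $T\mathbb{S}^7$ and the second factor is a polynomial, the latter vanishes identically; because $A_iq$ and $A_iA_7q$ ($i=1,\dots,6$) are all orthogonal to $q$ (skew-symmetry together with the orthonormality of $A_1q,\dots,A_7q$), this forces $\sum_{i=1}^{6}\gamma_iA_iq=\sum_{i=1}^{6}\alpha_iA_iA_7q$ for all $q\in\mathbb{R}^8$, i.e. $\sum_{i=1}^{6}\gamma_iA_i=\sum_{i=1}^{6}\alpha_iA_iA_7$ in $\mathfrak{so}(8)$; the linear independence of $\{A_i\}\cup\{A_jA_k\}$ (Proposition \ref{prop_onb}) then yields $\alpha_i=\gamma_i=0$ for all $i$, so $A=0$.

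I do not expect a serious obstacle here. The point worth flagging is that, in contrast to the rank $4$ and rank $5$ cases, the vertical distribution of $\mathbb{S}_{\textup{H}}^7$ is one-dimensional, so the reduced commutativity condition only carries the single transversal factor $F_{A_7}$; the generic non-vanishing of this factor disposes of step (2) at once, and no simultaneous-eigenvector argument with quadruple products $A_iA_jA_kA_\ell$ is needed. The only mildly delicate bookkeeping is the explicit description of $\mathfrak{k}_6^{\perp}$ via Proposition \ref{prop_onb} and the elementary orthogonality relations $A_iq\perp q$ and $A_iA_7q\perp q$.
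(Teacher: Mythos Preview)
Your proposal is correct and follows essentially the same route as the paper. The paper streamlines both steps by observing at the outset the clean equivalence $\{H_{\textup{sub},6},F_A\}=0\iff F_{[A_7,A]}=0\iff [A_7,A]=0$ (using $H_{\textup{sub},6}=\tfrac12\|\xi\|^2-\tfrac12F_{A_7}^2$ and that $F_B\equiv 0$ on $T\mathbb{S}^7$ forces $B=0$ for $B\in\mathfrak{so}(8)$); it then checks $[A_7,A]=0$ on the generators of $\mathfrak{k}_6$ for (1) and reads off $A=0$ from $[A_7,A]=-2\sum_i\alpha_iA_iA_7+2\sum_j\beta_{j7}A_j$ for (2). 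Your factorization argument in (2) unwinds exactly this equivalence, and your alternative for (1) via $\sum_{\ell=1}^6F_{A_\ell}F_{[A_\ell,A]}$ arrives at the same conclusion with a little more bookkeeping.
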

\begin{proof}
We show the followings: 
\begin{enumerate}
\item If $A\in \mathfrak{k}_6$, then $\left\{H_{\textup{sub},6}, F_A\right\}=0$. 
\item If $A\in \mathfrak{k}_6^{\perp}$ and $\left\{H_{\textup{sub},6}, F_A\right\}=0$, then $A=0$. 
\end{enumerate}

\noindent (1)\; 
Let $A$ be in $\mathfrak{k}_6$. 
By Lemma \ref{com_rel_ham_f_a} we have 
\[
\left\{H_{\textup{sub},6}, F_A\right\}=0
\iff
F_{[A_7,A]}=0
\iff 
[A_7,A]=0. 
\]
The last equality follows from $[A_7, A_jA_k]=0$ for $1\leq j<k\leq 6$. 

\noindent (2)\; 
Assume that $A=\displaystyle \sum_{i=1}^6\alpha_iA_i+\sum_{j=1}^6\beta_{j7}A_jA_7\in \mathfrak{k}_6^{\perp}$, $\alpha_i, \beta_{j7}\in\mathbb{R}$, satisfies $[A_7,A]=0$. 
Then we have 
\[
0=
\left[A_7,A\right]
=
-2\sum_{i=1}^6\alpha_iA_iA_7+2\sum_{j=1}^6\beta_{j7}A_j.
\]
Since $A_i$, $i=1, \ldots, 6$, $A_jA_7$, $j=1, \ldots, 6$, are linearly independent, it follows $\alpha_i=0$, $i=1, \ldots, 6$, $\beta_{j7}=0$, $j=1, \ldots, 6$, i.e. $A=0$. 
\end{proof}
}
Note that $\mathfrak{k}_6$ contains five elements, e.g. $A_1A_j$, $j=2, \ldots, 6$, satisfying the Clifford relation \eqref{GL-Matrix-Clifford-relations}. 
Using Theorem \ref{inf_iso_cork_one}, an argument similar to \cite[Proposition 3.5]{BLT} and \cite{BFI}, as well as to Corollary \ref{cor_g_5}, shows that the SR isometry group $\mathrm{Iso}_{\textup{sub}}\left(\mathbb{S}_{\textup{T},6}^7\right)=\mathrm{Iso}_{\textup{sub}}\left(\mathbb{S}_{\textup{H}}^7\right)$ acts transitively on $\mathbb{S}^7$. 

%%%%%%%%%%%%%%%%%%%%%%%%%%%%%%%%%%%%%%%%%%%%%%
\subsection{SR isometry group of $\mathbb{S}_{\textup{QH}}^7$}
%%%%%%%%%%%%%%%%%%%%%%%%%%%%%%%%%%%%%%%%%%%%%%
%%%%%%%%%%%%%%%%%%%%%%%%%%%%%%%%%%%%%%%%%%%%%%
We consider the SR isometry group $\mathrm{Iso}_{\textup{sub}}\left(\mathbb{S}_{\textup{QH}}^7\right)$.
We start by determining the infinitesimal isometries in $O(8)$. 
\begin{thm}\label{thm_iso_sub_qh}
The Lie algebra of $\mathrm{Iso}_{\textup{sub}}\left(\mathbb{S}_{\textup{QH}}^7\right)\cap O(8)$ coincides with $\mathfrak{g}_6$ in \eqref{chain_subalgebras_1}. 
In particular, we have $\dim \left(\mathrm{Iso}_{\textup{sub}}\left(\mathbb{S}_{\textup{QH}}^7\right)\right)\geq \dim\mathfrak{g}_6=13$. 
\end{thm}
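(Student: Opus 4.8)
The plan is to run the same two-part scheme as in the proofs for $\mathbb{S}_{\textup{T},5}^7$ and $\mathbb{S}_{\textup{T},4}^7$ above, now applied to the Hamiltonian $H_{\textup{sub},\textup{QH}}=\tfrac12\|\xi\|^2-\tfrac12\bigl(F_{A_6}^2+F_{A_7}^2+F_K^2\bigr)$ coming from \eqref{Hamiltonian-QH-definition}, where $K=A_6A_7$ and $F_A(q,\xi)=\langle Aq,\xi\rangle$. As before, $\|\xi\|^2$ Poisson commutes with every $F_A$, $A\in\mathfrak{so}(8)$, so by Lemma \ref{com_rel_ham_f_a} the condition $\{H_{\textup{sub},\textup{QH}},F_A\}=0$ is equivalent to $F_{A_6}F_{[A_6,A]}+F_{A_7}F_{[A_7,A]}+F_{K}F_{[K,A]}=0$. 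First I would record a few consequences of the Clifford relations \eqref{GL-Matrix-Clifford-relations}: the triple $A_6,A_7,K$ spans a copy of $\mathfrak{so}(3)$ with $[A_6,A_7]=2K$, $[A_7,K]=2A_6$, $[K,A_6]=2A_7$; every product $A_iA_j$ with $1\le i<j\le5$ commutes with $A_6$, $A_7$, $K$; and for $i\le5$ one has $[A_6,A_iA_7]=0$, $[A_7,A_iA_6]=0$, $[K,A_i]=0$. Together with Proposition \ref{prop_onb} this yields $\mathfrak{g}_6=\mathrm{span}\{A_iA_j:1\le i<j\le5\}\oplus\mathrm{span}\{A_6,A_7,K\}\cong\mathfrak{so}(5)\oplus\mathfrak{so}(3)$ and that $\mathfrak{g}_6^{\perp}$ is spanned by $A_1,\dots,A_5$ together with the ten matrices $A_iA_6,A_iA_7$ ($1\le i\le5$).

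\emph{Step 1 (sufficiency).} For $A\in\mathfrak{g}_6$ I would verify $\{H_{\textup{sub},\textup{QH}},F_A\}=0$ directly from the reduced identity: if $A$ lies in the $\mathfrak{so}(5)$-summand then $[A_6,A]=[A_7,A]=[K,A]=0$, and if $A\in\mathrm{span}\{A_6,A_7,K\}$ the three terms cancel in pairs by the $\mathfrak{so}(3)$-relations above. By Lemma \ref{lem_hamiltonian_killing} the canonical vector field $X(A)$ is then a Killing field, and since $e^{tA}\in O(8)$ for skew-symmetric $A$, its flow $q\mapsto e^{tA}q$ is a one-parameter subgroup of $\mathrm{Iso}_{\textup{sub}}(\mathbb{S}_{\textup{QH}}^7)\cap O(8)$. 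With Lemma \ref{Lemma-intersection-iso-O-8} this gives $\mathfrak{g}_6\subset\mathrm{Lie}\bigl(\mathrm{Iso}_{\textup{sub}}(\mathbb{S}_{\textup{QH}}^7)\cap O(8)\bigr)$ and hence $\dim\mathrm{Iso}_{\textup{sub}}(\mathbb{S}_{\textup{QH}}^7)\ge\dim\mathfrak{g}_6=13$.

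\emph{Step 2 (necessity).} Write $A=\sum_{i=1}^5\alpha_iA_i+\sum_{i=1}^5\bigl(\beta_{i6}A_iA_6+\beta_{i7}A_iA_7\bigr)\in\mathfrak{g}_6^{\perp}$ and assume $\{H_{\textup{sub},\textup{QH}},F_A\}=0$. Computing $[A_6,A]$, $[A_7,A]$, $[K,A]$ with the commutation rules above turns the reduced identity into a homogeneous polynomial relation on $T\mathbb{S}^7\subset\mathbb{R}^8\times\mathbb{R}^8$. I would then substitute $\xi=A_\ell A_7q$ for a fixed $\ell\in\{1,\dots,5\}$: using $A_7A_\ell A_7=A_\ell$ and $A_7A_6A_\ell A_7=-A_6A_\ell$ one gets $F_{A_7}(q,A_\ell A_7q)=F_K(q,A_\ell A_7q)=0$, so the relation collapses to $F_{A_6}(q,A_\ell A_7q)\cdot p_\ell(q)=0$ with $p_\ell(q)=\sum_{i\ne\ell}\bigl(\alpha_i\langle q,A_6A_iA_\ell A_7q\rangle+\beta_{i6}\langle q,A_iA_\ell A_7q\rangle\bigr)$. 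Since $A_6A_\ell A_7$ is symmetric, squares to $\mathsf{E}_8$ and is traceless, $F_{A_6}(q,A_\ell A_7q)=\langle A_6q,A_\ell A_7q\rangle$ is nonzero on a dense open set, hence $p_\ell\equiv0$ identically in $q$. To extract the coefficients from $p_\ell$ I would fix distinct $m,n\in\{1,\dots,5\}\setminus\{\ell\}$ and use that $A_mA_\ell A_7$ and $A_6A_nA_\ell A_7$ are commuting symmetric involutions (both squaring to $\mathsf{E}_8$, traceless): choosing a common eigenvector $q_+$ with eigenvalue pair $(+1,+1)$ and producing $q_-$ with pair $(+1,-1)$ via a suitable auxiliary matrix $A_rA_s$ chosen to commute with the first and anticommute with the second (such $\{r,s\}$ exists because $m,n,\ell$ occupy only three of the seven indices), exactly as in the $\mathbb{S}_{\textup{T},5}^7$ argument, and evaluating $p_\ell$ at $q_\pm$ kills all but two terms and yields $\beta_{m6}\mp\alpha_n=0$, hence $\beta_{m6}=\alpha_n=0$. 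Letting $\ell,m,n$ vary gives $\alpha_i=\beta_{i6}=0$ for all $i$; the residual operator $A=\sum_i\beta_{i7}A_iA_7$ is then handled the same way via the substitution $\xi=A_\ell A_6q$ (along which $F_{A_6}$ and $F_K$ vanish), forcing $\beta_{i7}=0$. Thus $A=0$, and together with Step 1 the Lie algebra of $\mathrm{Iso}_{\textup{sub}}(\mathbb{S}_{\textup{QH}}^7)\cap O(8)$ equals $\mathfrak{g}_6$ in \eqref{chain_subalgebras_1}.

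\emph{Main obstacle.} Step 1 and the reduction to a polynomial identity in Step 2 are routine; the real work is the bookkeeping in the necessity argument — keeping straight which products of the $A_i$ are symmetric versus skew, which pairs commute or anticommute, and which square to $\pm\mathsf{E}_8$ — and organising the two substitutions together with the simultaneous-eigenvector argument so that all fifteen coefficients $\alpha_i,\beta_{i6},\beta_{i7}$ are eliminated without circular dependence.
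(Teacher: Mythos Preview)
Your proof is correct and follows essentially the same two-part strategy as the paper: reduce via Lemma \ref{com_rel_ham_f_a} to the identity $F_{A_6}F_{[A_6,A]}+F_{A_7}F_{[A_7,A]}+F_KF_{[K,A]}=0$, verify it directly on $\mathfrak{g}_6$, and in the necessity direction substitute special $\xi$ to isolate one of the three summands and then use simultaneous eigenvectors of commuting symmetric involutions to kill the coefficients of $A\in\mathfrak{g}_6^{\perp}$. The only difference is bookkeeping --- the paper first takes $\xi=A_\ell q$ (isolating the $F_K$-term and eliminating all $\beta_{jk}$ at once) and then $\xi=A_\ell A_6 q$ for the $\alpha_i$, whereas you take $\xi=A_\ell A_7 q$ (isolating the $F_{A_6}$-term) followed by $\xi=A_\ell A_6 q$; both routes are valid and the eigenvector arguments go through exactly as you describe.
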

\begin{proof}
We show the following: 
\begin{enumerate}
\item If $A\in \mathfrak{g}_6$, then $\left\{H_{\textup{sub,QH}}, F_A\right\}=0$. 
\item If $A\in \mathfrak{g}_6^{\perp}\subset\mathfrak{so}(8)$ and $\left\{H_{\textup{sub,QH}}, F_A\right\}=0$, then $A=0$. 
\end{enumerate}

Since $H_{\textup{sub,QH}}=\displaystyle\dfrac{1}{2}\left\|\xi\right\|^2-\dfrac{1}{2}\left(F_{A_6}^2+F_{A_7}^2+F_{A_6A_7}^2\right)$ and $\left\|\xi\right\|^2$ 
Poisson commutes  with $F_A$,  the condition $\left\{H_{\textup{sub,QH}}, F_A\right\}=0$ is, by Lemma \ref{com_rel_ham_f_a}, equivalent to 
\begin{equation}\label{reduced_poisson_commutativity_Q}
F_{A_6}F_{\left[A_6, A\right]}
+
F_{A_7}F_{\left[A_7, A\right]}
+
F_{A_6A_7}F_{\left[A_6A_7, A\right]}
=
0. 
\end{equation}

\noindent (1)\; Let $A\in\mathfrak{g}_6$ be expanded as:   
\[
A=\displaystyle \alpha_6A_6+\alpha_7A_7+\sum_{1\leq j<k\leq 5}\beta_{jk}A_jA_k+\beta_{67}A_6A_7,
\]
where $\alpha_i, \beta_{jk}\in \mathbb{R}$. 
We have 
\begin{align*}
\left[A_6, A\right]
&=
2\alpha_7A_6A_7-2\beta_{67}A_7, 
\,
\left[A_7, A\right]
=
-2\alpha_6A_6A_7+2\beta_{67}A_6, 
\\
\left[A_6A_7, A\right]
&=
2\alpha_6A_7-2\alpha_7A_6, 
\end{align*}
and hence 
\begin{align*}
&F_{A_6}F_{\left[A_6, A\right]}
+
F_{A_7}F_{\left[A_7, A\right]}
+
F_{A_6A_7}F_{\left[A_6A_7, A\right]} \\
&=
2\alpha_7F_{A_6}F_{A_6A_7}-2\beta_{67}F_{A_6}F_{A_7}-2\alpha_6F_{A_7}F_{A_6A_7} \\
&\quad 
+2\beta_{67}F_{A_7}F_{A_6}+2\alpha_6F_{A_6A_7}F_{A_7}-2\alpha_7F_{A_6A_7}F_{A_6}
=0. 
\end{align*}
This proves (1). 

\noindent (2)\; 
We expand $A\in \mathfrak{g}_6$ as 
\[
A
=
\displaystyle \sum_{i=1}^5\alpha_i A_i+\sum_{1\leq j\leq 5, 6\leq k\leq 7}\beta_{jk}A_jA_k, 
\]
where $\alpha_i, \beta_{jk}\in\mathbb{R}$. Note that  
\begin{align*}
\left[A_\ell, A\right]
&=
-2\sum_{i=1}^5\alpha_iA_iA_\ell+2\sum_{j=1}^5\beta_{j\ell}A_j, \quad \ell=6,7; 
\\
\left[A_6A_7, A\right]
&=
2\sum_{j=1}^5\beta_{j6}A_jA_7-2\sum_{j=1}^5\beta_{j7}A_jA_6, 
\end{align*}
and hence \eqref{reduced_poisson_commutativity_Q} can be computed as 
\begin{align}\label{Q_A_6A_7A_6A_7}
&\sum_{\ell=6}^7
\left\langle A_\ell q, \xi\right\rangle\cdot \left(\sum_{i=1}^5\alpha_i\left\langle A_iA_\ell q, \xi\right\rangle -\sum_{j=1}^5\beta_{j\ell}\left\langle A_jq, \xi\right\rangle\right) \notag \\
&+\left\langle A_6A_7q, \xi\right\rangle\cdot \left(-\sum_{j=1}^5\beta_{j6}\left\langle A_jA_7q, \xi\right\rangle +\sum_{j=1}^5\beta_{j7}\left\langle A_jA_6q, \xi\right\rangle\right)
=0. 
\end{align}

Now, we fix $\ell\in\{1, \ldots, 5\}$ and set $\xi=A_{\ell}q$. 
Then, we see that the double sum in \eqref{Q_A_6A_7A_6A_7} vanishes. 
Therefore, we have 
\[
\left\langle A_6A_7q, A_{\ell}q\right\rangle\cdot \left(\sum_{j=1}^5\beta_{j6}\left\langle A_jA_7q, A_{\ell}q\right\rangle -\sum_{j=1}^5\beta_{j7}\left\langle A_jA_6q, A_{\ell}q\right\rangle\right)
=0. 
\]
The first factor $\left\langle A_6A_7q, A_{\ell}q\right\rangle=-\left\langle q, A_6A_7A_{\ell}q\right\rangle$ does not vanish for almost all $q\in \mathbb{S}^7$. 
Since the second factor is a homogeneous polynomial in $q$, we have 
\begin{equation}\label{reduced_relation_for_isometry_Q}
\sum_{j=1, j\neq \ell}^5\beta_{j6}\left\langle q, A_{\ell}A_jA_7q\right\rangle -\sum_{j=1, j\neq \ell}^5\beta_{j7}\left\langle q, A_{\ell}A_jA_6q\right\rangle
=0 
\end{equation}
for all $q\in \mathbb{R}^8$. 

Next, for fixed distinct numbers $m,n\in \{1,\ldots, 5\}\setminus \{\ell\}$, we have 
\[
\left(A_{\ell}A_mA_7\right)^2=\mathsf{E}_8, \quad 
\left(A_{\ell}A_nA_6\right)^2=\mathsf{E}_8, \quad
\left[A_{\ell}A_mA_7, A_{\ell}A_nA_6\right]=0, 
\]
and hence the symmetric matrices $A_{\ell}A_mA_7$ and $A_{\ell}A_nA_6$ both have eigenvalues $\pm 1$.  Therefore they admit a simultaneous eigenspace decomposition of $\mathbb{R}^8$. 
Since 
\[
\left(A_{\ell}A_mA_7\right)\cdot A_7
=
A_7 \cdot \left(A_{\ell}A_mA_7\right), \quad 
\left(A_{\ell}A_nA_6\right)\cdot A_7
=
-A_7\cdot \left(A_{\ell}A_nA_6\right),
\]
there exist two vectors $q_{\pm}=q_{\pm}^{\ell mn}\in \mathbb{S}^7$ such that 
\[
A_{\ell}A_mA_7q_{\pm}=q_{\pm}, \quad 
A_{\ell}A_nA_6q_{\pm}=\pm q_{\pm}. 
\]
For these $q_{\pm}$, \eqref{reduced_relation_for_isometry_Q} can be written as $\beta_{m6}\mp \beta_{n7}=0$, which means that
\[
\beta_{m6}=\beta_{n7}=0.
\]
As distinct numbers $\ell, m, n\in\{1, \ldots, 5\}$ are taken arbitrary, these relations hold for all $\ell, m\in\{1, \ldots, 5\}$. 

Returning to \eqref{Q_A_6A_7A_6A_7}, we now have 
\[
\left\langle A_6q, \xi\right\rangle\cdot \sum_{i=1}^5\alpha_i\left\langle A_iA_6q, \xi\right\rangle
+
\left\langle A_7q, \xi\right\rangle\cdot \sum_{i=1}^5\alpha_i\left\langle A_iA_7q, \xi\right\rangle
=0. 
\]
Setting $\xi=A_{\ell}A_6q$ for some (fixed) $\ell\in\{1, \ldots, 5\}$, we see that the first term vanishes because $\left\langle A_6q, A_{\ell}A_6q \right\rangle=-\left\langle q, A_{\ell}q\right\rangle=0$. 
Hence, 
\[
\left\langle A_7q, A_{\ell}A_6q\right\rangle\cdot \sum_{i=1, i\neq \ell}^5\alpha_i\left\langle A_iA_7q, A_{\ell}A_6q\right\rangle
=0. 
\]
Note that the first factor $\left\langle A_7q, A_{\ell}A_6q\right\rangle=-\left\langle q, A_{\ell}A_6A_7q\right\rangle$ is not zero for almost all $q\in \mathbb{S}^7$. 
Since the second factor is a homogeneous polynomial in $q$, we have 
\[
\sum_{i=1,i\neq \ell}^5\alpha_i\left\langle q, A_iA_{\ell}A_6A_7q\right\rangle
=0
\]
for all $q\in \mathbb{R}^8$. 
We fix $m\in\{1, \ldots, 5\}\setminus \{\ell\}$.  As $\left(A_mA_{\ell}A_6A_7\right)^2=\mathsf{E}_8$ and $A_mA_{\ell}A_6A_7\neq \pm \mathsf{E}_8$, there exists a vector 
$q\in \mathbb{S}^7$ such that $A_mA_{\ell}A_6A_7q=q$. 
In this case, if $i\in\{1, \ldots, 5\}\setminus \{\ell,m\}$, we have $A_iA_{\ell}A_6A_7q=-A_iA_mA_mA_{\ell}A_6A_7q=-A_iA_mq$ and hence $\left\langle q, A_iA_{\ell}A_6A_7q\right\rangle =-\left\langle q, A_iA_mq\right\rangle=0$. 
Therefore, we have $\alpha_m=0$. 
Since the distinct numbers $\ell, m \in \{1, \ldots, 5\}$ are arbitrary, this is satisfied for all $m=1, \ldots, 5$. 
Thus, we have $A=0$. 
\end{proof}

\begin{rem}
By Theorem \ref{thm_iso_sub_qh}, we see that the Lie group $G_5\subset O(8)$ corresponding to the Lie algebra $\mathfrak{g}_5$ is included in the SR isometry group $\mathrm{Iso}_{\textup{sub}}\left(\mathbb{S}_{\textup{QH}}^7\right)$. 
By Corollary \ref{cor_g_5}, we conclude that $\mathrm{Iso}_{\textup{sub}}\left(\mathbb{S}_{\textup{QH}}^7\right)$ acts transitively on $\mathbb{S}^7$. 
\end{rem}
%%%%%%%%%%%%%%%%%%%%%%%%%%%%%%%%%%%%%%%%%%%%%%%%%%%%%%%%%%%%%%%%%%%%
\section{Commuting differential operators}
%%%%%%%%%%%%%%%%%%%%%%%%%%%%%%%%%%%%%%%%%%%%%%%%%%%%%%%%%%%%%%%%%%%%%%%%%
One method of determining a family of Poisson commuting first integrals consists in constructing a set of pseudo-differential operators including the sublaplacian, which pairwise commute modulo first order operators, and considering their principal symbols. 
In the present section, we show that, for all of the above SR structures on $\mathbb{S}^7$, an even stronger and converse statement is true, as well. 
In each case, we define a set of seven commuting differential operators including the sublaplacian, with full symbols being the previously constructed Poisson commuting first integrals. 

\medskip

\noindent
{\bf Rank 5 trivializable structure:} We assign to each of the subalgebras $\mathfrak{g}_{\ell}$ in (\ref{chain_subalgebras_1}) a second order differential operator $\mathcal{G}_{\ell}$ as follows: 
\begin{equation}\label{differential-operator-assigned-to-subalgebra}
\mathcal{G}_{\ell}:= \sum_{E \in \mathfrak{G}_{\ell}} X(E)^2 \hspace{3ex} \mbox{\it where} \hspace{3ex} \ell=1, \ldots, 7. 
\end{equation}
Here we use the notation analogous to \eqref{canonical_vf} and by $\mathfrak{G}_{\ell}$ we denote the set of generators of $\mathfrak{g}_{\ell}$ in \eqref{chain_subalgebras_1}. 
\begin{prop}\label{Proposition_commuting_differential_operators}
The operators $\mathcal{G}_{\ell}$ for $\ell=1, \ldots, 7$ pairwise commute and commute with the sublaplacian $\Delta_{\textup{T},5}^{\textup{sub}}=-\sum_{j=1}^5X(A_j)^2$ in \eqref{Definition_sub_laplacians}. 
\end{prop}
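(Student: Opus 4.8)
The plan is to reduce the statement to a purely Lie-algebraic computation inside the universal enveloping algebra of $\mathfrak{so}(8)$, acting on $\mathbb{S}^7$ through the canonical vector fields $X(\cdot)$. The key observation is that the assignment $E\mapsto X(E)$ defined in \eqref{canonical_vf} is (up to sign) a Lie algebra homomorphism: one checks $[X(E),X(F)]=-X([E,F])$ for $E,F\in\mathfrak{so}(8)$, which was already used in the proof of the proposition on strong bracket generation. Consequently, for any subalgebra $\mathfrak{a}\subset\mathfrak{so}(8)$ with a $B$-orthonormal basis $\{E_1,\dots,E_k\}$ (in the sense of Proposition \ref{prop_onb}), the second order operator $\sum_i X(E_i)^2$ is, up to the constant $\tfrac{1}{8}$, the image under $X(\cdot)$ of the Casimir element of $\mathfrak{a}$ with respect to $B_{\mathfrak{a}}$. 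Hence I would first rewrite each $\mathcal{G}_\ell$ in \eqref{differential-operator-assigned-to-subalgebra}, and likewise $\Delta_{\textup{T},5}^{\textup{sub}}$, as (the $X$-image of) the Casimir of $\mathfrak{g}_\ell$ and of a suitable element, respectively; note $\Delta_{\mathbb{S}^7}=-\sum_{i=1}^7 X(A_i)^2$ is itself $\tfrac18$ times the image of the Casimir of $\mathfrak{so}(8)=\mathfrak{g}_7$, and $\mathcal{G}_5-\mathcal{G}_4$, $\mathcal{G}_6-\mathcal{G}_5$ involve only $X(A_6)^2+X(A_7)^2$ and $X(A_6A_7)^2$, so that $\Delta_{\textup{T},5}^{\textup{sub}}$ lies in the algebra generated by the $\mathcal{G}_\ell$.

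The second step is the algebraic heart: I would invoke the standard fact that the Casimir element $\mathrm{Cas}_{\mathfrak{a}}$ of a non-degenerate subalgebra $\mathfrak{a}\subset\mathfrak{g}$ is $\mathrm{ad}(\mathfrak{a})$-invariant, hence central in $U(\mathfrak{a})$, and moreover commutes in $U(\mathfrak{g})$ with every element of $\mathfrak{a}$ and with $\mathrm{Cas}_{\mathfrak{b}}$ for any $\mathfrak{b}$ with $[\mathfrak{a},\mathfrak{b}]\subset\mathfrak{b}$ — this is the enveloping-algebra counterpart of Proposition \ref{porp_thimm}. Since the $\mathfrak{g}_\ell$ form an increasing chain $\mathfrak{g}_1\subset\cdots\subset\mathfrak{g}_7=\mathfrak{so}(8)$ of non-degenerate subalgebras (as recorded after \eqref{chain_subalgebras_1}), we get $[\mathrm{Cas}_{\mathfrak{g}_i},\mathrm{Cas}_{\mathfrak{g}_j}]=0$ in $U(\mathfrak{so}(8))$ for all $i,j$. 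Applying the homomorphism $X(\cdot)$ — which sends commutators in $U(\mathfrak{so}(8))$ to commutators of differential operators on $\mathbb{S}^7$, since $[X(E),X(F)]=-X([E,F])$ — yields $[\mathcal{G}_i,\mathcal{G}_j]=0$ directly. For the commutation with $\Delta_{\textup{T},5}^{\textup{sub}}$ one notes that the extra terms $X(A_6)^2+X(A_7)^2$ and $X(A_6A_7)^2$ are themselves images of Casimirs of the non-degenerate ideals $\mathfrak{so}(2)\oplus\mathfrak{so}(2)$-type pieces sitting inside $\mathfrak{g}_5$ and $\mathfrak{g}_6$, each of which either contains or centralizes $\mathfrak{g}_\ell$ appropriately along the chain; so the same centrality argument applies.

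The main obstacle I anticipate is bookkeeping rather than conceptual: one must verify carefully that $X(\cdot)$ is a genuine (anti)homomorphism of associative algebras \emph{at the level of differential operators on} $\mathbb{S}^7$ — i.e. that the operator identity $X(E)X(F)-X(F)X(E)=-X([E,F])$ holds when $X(E)$ are viewed as first-order operators acting on $C^\infty(\mathbb{S}^7)$, not merely as vector fields — and that the second-order operators $\sum_i X(E_i)^2$ really do coincide with $X$ applied to $\sum_i E_i^2\in U(\mathfrak{so}(8))$ (there is no ambiguity since squaring a single element needs no symmetrization). A secondary point requiring care is that the Casimir of $\mathfrak{g}_\ell$ with respect to $B_{\mathfrak{g}_\ell}$ is built from the $B$-orthonormal generators $\tfrac{1}{2\sqrt2}E$ of Proposition \ref{prop_onb}, so the operators $\mathcal{G}_\ell$ in \eqref{differential-operator-assigned-to-subalgebra} differ from the Casimir images only by the harmless global factor $8$; this does not affect commutativity but should be stated so the identification is clean. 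Once these points are in place, the proof is a one-line consequence of centrality of Casimirs along a chain of non-degenerate subalgebras.
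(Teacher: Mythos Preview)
Your approach is correct and genuinely different from the paper's. The paper proves the commutation relations by a hands-on calculation with the Clifford relations: it writes $\mathcal{G}_\ell=\mathcal{G}_n+\sum X(A_iA_j)^2$ and verifies case by case the key identity
\[
\big[X(A_iA_j)^2+X(A_rA_j)^2,\,X(A_iA_r)\big]=0,
\]
together with the fact that $\Delta_{\mathbb{S}^7}$ commutes with every Killing field. Your route replaces all of this by a single structural observation: the anti-homomorphism $E\mapsto X(E)$ extends (via $E\mapsto -X(E)$) to an associative algebra homomorphism $U(\mathfrak{so}(8))\to\mathrm{Diff}(\mathbb{S}^7)$, and each $\mathcal{G}_\ell$ is the image of the quadratic Casimir of $\mathfrak{g}_\ell$; since $\mathrm{Cas}_{\mathfrak{g}_j}$ is central in $U(\mathfrak{g}_j)\supset U(\mathfrak{g}_i)$ for $i\le j$, the commutativity $[\mathcal{G}_i,\mathcal{G}_j]=0$ is immediate. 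This is the operator-level counterpart of Thimm's Proposition~\ref{porp_thimm} and makes the result transparent and reusable; the paper's computation, by contrast, is self-contained and avoids appealing to facts about $U(\mathfrak{g})$ or Casimir operators.

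One point in your sketch needs more than bookkeeping. You write that $\Delta_{\mathbb{S}^7}=-\sum_{i=1}^7 X(A_i)^2$ is ``$\tfrac18$ times the image of the Casimir of $\mathfrak{so}(8)$''. As elements of $U(\mathfrak{so}(8))$ these are different: the Casimir has 28 quadratic terms (its image is, up to a scalar, $\mathcal{G}_7$), whereas $-\Delta_{\mathbb{S}^7}$ uses only the seven $A_i$'s. What is true is that the two coincide \emph{as operators on} $C^\infty(\mathbb{S}^7)$, because the image of $\mathrm{Cas}_{\mathfrak{so}(n)}$ under the standard action is the spherical Laplacian (equivalently, since $\tfrac{1}{2\sqrt2}A_i,\tfrac{1}{2\sqrt2}A_iA_j$ and $\tfrac{1}{\sqrt2}E_{ij}$ are both $B$-orthonormal bases, basis-independence of the Casimir gives $\tfrac18\mathcal{G}_7=\tfrac12\sum_{i<j}L_{ij}^2=-\tfrac12\Delta_{\mathbb{S}^7}$). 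Once this is made explicit, your conclusion $\Delta_{\textup{T},5}^{\textup{sub}}=-\tfrac14\mathcal{G}_7+(\mathcal{G}_6-\mathcal{G}_5)$ follows and yields the commutation with all $\mathcal{G}_\ell$. (Note also that your aside about $X(A_6)^2+X(A_7)^2$ being the Casimir of a subalgebra is not quite right, since $\mathrm{span}\{A_6,A_7\}$ is not closed under the bracket; but you don't need that detour---the difference $\mathcal{G}_6-\mathcal{G}_5$ already does the job.)
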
 
\begin{proof}
We first treat the case $\ell=1, \ldots, 4$. Note that 
\begin{equation*}
\mathcal{G}_{\ell} = \mathcal{G}_{n}+ \sum_{j=n+2}^{\ell+1}\sum_{i=1}^{j-1} X(A_iA_j)^2
\hspace{3ex}\mbox{\it where} \hspace{3ex} 1\leq n < \ell=2, \ldots ,4. 
\end{equation*}
We obtain the commutator: 
\begin{align*}
\big{[} \mathcal{G}_{\ell}, \mathcal{G}_n \big{]} 
&= \sum_{j=n+2}^{\ell+1} \sum_{i=1}^{j-1} \big{[}X(A_iA_j)^2, \mathcal{G}_n \big{]}.
\end{align*}
Note that $[X(A_iA_j), X(A_rA_k)]=-2X([A_iA_j,A_rA_k]))=0$ for pairwise distinct indices $\{i,j,r,k\}$ (see e.g. \cite{BFI}). 
Moreover, let 
$1 \leq i< r \leq  n+1$ and $n+2 \leq j \leq \ell+1$, then 
\begin{align}
&\big{[}X(A_iA_j)^2+X(A_rA_j)^2, X(A_iA_r) \big{]}\label{eq_commutators_sum_of_squares}\\
=&X(A_iA_j) \big{[} X(A_iA_j), X(A_iA_r) \big{]} + \big{[} X(A_iA_j), X(A_iA_r)\big{]} X(A_iA_j)\notag \\
& \hspace{4ex} +X(A_rA_j) \big{[}X(A_rA_j), X(A_iA_r) \big{]} + \big{[} X(A_rA_j), X(A_iA_r) \big{]} X(A_rA_j) \notag \\
& \hspace{4ex}= -2X(A_iA_j) X(A_jA_r) -2X(A_jA_r)X(A_iA_j)\notag \\
& \hspace{8ex} +2X(A_rA_j)X(A_jA_i)+2X(A_jA_i)X(A_rA_j)=0. \notag 
\end{align}
In conclusion, $[\mathcal{G}_{\ell}, \mathcal{G}_n]=0$ for $1\leq n < \ell$ and $\ell=2, \ldots, 4$. The operator $\mathcal{G}_5$ is decomposed  as 
$$\mathcal{G}_5= \mathcal{G}_4+ X(A_6A_7)^2,$$ 
where the last summand commutes with $\mathcal{G}_{\ell}$ for $\ell=1, \ldots, 4$ as well as with the sum-of-squares $X(A_6)^2+X(A_7)^2$. 
Hence $\mathcal{G}_5$ commutes with $\mathcal{G}_1, \ldots, \mathcal{G}_6$. Note that $X(A_6)$ and $X(A_7)$ commute with $X(A_iA_j)$ where 
$1\leq i<j \leq 5$ and therefore $\mathcal{G}_6$ commutes with $\mathcal{G}_1, \ldots, \mathcal{G}_4$. 
\vspace{1mm}\par 
Finally, we have 
\begin{equation}\label{decomposition-of-L-7}
\mathcal{G}_7= \mathcal{G}_5+ \sum_{j=6}^7 \sum_{i=1}^{j-1} X(A_iA_j)^2 - \Delta_{\mathbb{S}^7}. 
\end{equation}
Note that $A_i$ and $A_iA_j$ act as isometries on $\mathbb{S}^7$ for all $i,j=1, \ldots, 7$ and hence the standard Laplacian $\Delta_{\mathbb{S}^7}=-\sum_{j=1}^7X(A_j)^2$ commutes with the Killing vector fields $X(A_i)$ and $X(A_iA_j)$. Therefore, $\Delta_{\mathbb{S}^7}$ commutes with $\mathcal{G}_{\ell}$ where $\ell=1, \ldots, 7$. 
It remains to show that 
\begin{equation*}
 \sum_{j=6}^7 \sum_{i=1}^{j-1} \big{[}X(A_iA_j), \mathcal{G}_{\ell}\big{]} =0, \hspace{4ex} \ell =1, \ldots, 5.
\end{equation*}
This follows by a calculation similar to (\ref{eq_commutators_sum_of_squares}). 
\end{proof}
\noindent
{\bf Quaternionic Hopf fibration:} Next we consider the sublaplacian $\Delta_{\textup{QH}}^{\textup{sub}}$ on $\mathbb{S}_{\textup{QH}}^7$ induced by the 
quaternionic Hopf fibration. 
More precisely, 
\begin{align} \label{decomposition_Delta_quaterionic-Hopf}
\Delta_{\textup{QH}}^{\textup{sub}}
&=\Delta_{\mathbb{S}^7} +  X(A_6)^2+X(A_7)^2+X(A_6A_7)^2 
= \Delta_{\textup{sub}, 5} + X(A_6A_7)^2. 
\end{align}
\begin{prop}\label{Prop-commuting-differential-operators-4}
The operators $\mathcal{G}_{\ell}$ for $\ell=1,\ldots, 7$ in Proposition \ref{Proposition_commuting_differential_operators} pairwise commute and commute with 
$\Delta_{\textup{QH}}^{\textup{sub}}$. 
\end{prop}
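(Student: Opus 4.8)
The plan is to deduce this proposition almost formally from Proposition~\ref{Proposition_commuting_differential_operators}. That result already establishes that $\mathcal{G}_1,\ldots,\mathcal{G}_7$ pairwise commute and that each $\mathcal{G}_\ell$ commutes with $\Delta_{\textup{T},5}^{\textup{sub}}=-\sum_{j=1}^5 X(A_j)^2$. Hence the pairwise commutativity of the $\mathcal{G}_\ell$ is nothing new here, and the only assertion left to prove is that every $\mathcal{G}_\ell$ commutes with $\Delta_{\textup{QH}}^{\textup{sub}}$.

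First I would record the decomposition \eqref{decomposition_Delta_quaterionic-Hopf}, namely $\Delta_{\textup{QH}}^{\textup{sub}}=\Delta_{\textup{T},5}^{\textup{sub}}+X(A_6A_7)^2$, which in turn rests on the elementary identity $\Delta_{\mathbb{S}^7}+X(A_6)^2+X(A_7)^2=-\sum_{i=1}^5 X(A_i)^2$ and on the fact that $A_6A_7\in\mathfrak{so}(8)$, so that $X(A_6A_7)$ is among the vector fields already treated. In view of this decomposition it suffices to verify $\left[\mathcal{G}_\ell,X(A_6A_7)^2\right]=0$ for $\ell=1,\ldots,7$, and then add this to $\left[\mathcal{G}_\ell,\Delta_{\textup{T},5}^{\textup{sub}}\right]=0$.

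The key point is the observation, already used in the proof of Proposition~\ref{Proposition_commuting_differential_operators}, that $\mathcal{G}_5=\mathcal{G}_4+X(A_6A_7)^2$, and hence $X(A_6A_7)^2=\mathcal{G}_5-\mathcal{G}_4$. Since both $\mathcal{G}_4$ and $\mathcal{G}_5$ commute with every $\mathcal{G}_\ell$ by Proposition~\ref{Proposition_commuting_differential_operators}, so does their difference $X(A_6A_7)^2$; combining this with $\left[\mathcal{G}_\ell,\Delta_{\textup{T},5}^{\textup{sub}}\right]=0$ yields $\left[\mathcal{G}_\ell,\Delta_{\textup{QH}}^{\textup{sub}}\right]=0$ termwise, which finishes the argument. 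As a sanity check one may instead argue directly: $\left[X(A_iA_j),X(A_6A_7)\right]=-2X\left([A_iA_j,A_6A_7]\right)=0$ whenever $\{i,j\}\cap\{6,7\}=\emptyset$, which handles all the $X(A_iA_j)^2$-summands of the $\mathcal{G}_\ell$ with $i<j\le 5$, while the remaining blocks $X(A_6)^2+X(A_7)^2$ (occurring in $\mathcal{G}_6$) and $X(A_iA_6)^2+X(A_iA_7)^2$ (occurring in $\mathcal{G}_7$) commute with $X(A_6A_7)^2$ by exactly the ``sum of two squares inside an $\mathfrak{so}(3)$-triple'' cancellation of \eqref{eq_commutators_sum_of_squares}.

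Because everything reduces to Proposition~\ref{Proposition_commuting_differential_operators}, there is no genuine obstacle in this proof; the only point requiring care is checking that the decomposition \eqref{decomposition_Delta_quaterionic-Hopf} really has the shape $\Delta_{\textup{QH}}^{\textup{sub}}=\Delta_{\textup{T},5}^{\textup{sub}}+(\mathcal{G}_5-\mathcal{G}_4)$, so that the claim follows term by term from what has already been established. (Had Proposition~\ref{Proposition_commuting_differential_operators} not been available, the work would instead sit in the $\mathfrak{so}(3)$-type cancellation of the alternative argument, but given that proposition the present statement is essentially a corollary.)
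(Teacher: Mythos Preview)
Your proposal is correct and follows essentially the same route as the paper: the paper also reduces everything to Proposition~\ref{Proposition_commuting_differential_operators}, observes that only commutation with $X(A_6A_7)^2$ remains, and dispatches this via the identity $X(A_6A_7)^2=\mathcal{G}_5-\mathcal{G}_4$. Your additional direct ``sanity check'' is not in the paper's proof but is harmless and consistent.
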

\begin{proof}
The first statement follows from Proposition \ref{Proposition_commuting_differential_operators}. It remains to show that the operators 
$\mathcal{G}_{\ell}$ for $\ell=1, \ldots, 7$ commute with $X(A_6A_7)^2$. In fact, this follows from $X(A_6A_7)^2= \mathcal{G}_5- \mathcal{G}_4$ and the first statement. 
\end{proof}

\noindent
{\bf Rank 4 trivializable structure:} We assign to each of the subalgebras in (\ref{Lie-subalgebras-rank-4-case}) a differential operator $\mathcal{H}_{\ell}$ as follows:
\begin{equation*}
\mathcal{H}_{\ell}:= \sum_{E \in \mathfrak{H}_{\ell}} X(E)^2 \hspace{3ex} \mbox{\it where} \hspace{3ex} \ell=1, \ldots, 7. 
\end{equation*}
Here $\mathfrak{H}_{\ell}$ denotes the set of generators of the Lie algebra $\mathfrak{h}_{\ell}$ in \eqref{Lie-subalgebras-rank-4-case}. 
\begin{prop}\label{Prop-commuting-differential-operators-3}
The operators $\mathcal{H}_{\ell}$ for $\ell=1,\ldots, 7$ pairwise commute. Moreover, they commute with the sublaplacian 
$\Delta_{\textup{T},4}^{\textup{sub}} =- \sum_{j=1}^4X(A_j)^2$. 
\end{prop}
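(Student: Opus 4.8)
The plan is to mimic the proof of Proposition~\ref{Proposition_commuting_differential_operators}, replacing the chain $\mathfrak{g}_1\subset\cdots\subset\mathfrak{g}_7$ by $\mathfrak{h}_1\subset\cdots\subset\mathfrak{h}_7$ from \eqref{Lie-subalgebras-rank-4-case}. Only two elementary facts are needed. First, the commutator identity for canonical vector fields $[X(E),X(F)]=-X([E,F])$, so that $X(E)$ and $X(F)$ commute whenever $[E,F]=0$; in particular $X(A_iA_j)$ and $X(A_rA_k)$ commute when $\{i,j\}\cap\{r,k\}=\emptyset$, and $X(A_k)$ commutes with $X(A_iA_j)$ when $k\notin\{i,j\}$. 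Second, the ``sum of squares'' cancellation already exploited in \eqref{eq_commutators_sum_of_squares}: if $[X(E),X(G)]=\lambda X(F)$ and $[X(F),X(G)]=-\lambda X(E)$ for a scalar $\lambda$, then $[X(E)^2+X(F)^2,X(G)]=0$, hence also $[X(E)^2+X(F)^2,X(G)^2]=0$. As in Proposition~\ref{Proposition_commuting_differential_operators} we also use that $\Delta_{\mathbb{S}^7}=-\sum_{k=1}^{7}X(A_k)^2$ commutes with each $X(A_k)$ and $X(A_iA_j)$, these being Killing fields of the round metric, hence with every $\mathcal{H}_n$.

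The first step is the pairwise commutation $[\mathcal{H}_m,\mathcal{H}_n]=0$, $n<m$, obtained by climbing the chain. For $1\le n<m\le 3$ this is verbatim the argument of Proposition~\ref{Proposition_commuting_differential_operators} with indices $\le 4$. Then I would write $\mathcal{H}_4=\mathcal{H}_3+X(A_5A_6)^2$, $\mathcal{H}_5=\mathcal{H}_3+X(A_5A_6)^2+X(A_5A_7)^2+X(A_6A_7)^2$, $\mathcal{H}_6=\mathcal{H}_5+X(A_5)^2+X(A_6)^2+X(A_7)^2$. Because of the block structure $\mathfrak{so}(4)\oplus\mathfrak{so}(4)$, any generator supported on indices in $\{5,6,7\}$ commutes with every generator of $\mathcal{H}_n$, $n\le 3$, by disjointness; the brackets that remain are $[X(A_5A_7)^2+X(A_6A_7)^2,X(A_5A_6)^2]$ (for $[\mathcal{H}_5,\mathcal{H}_4]$) and $[X(A_a)^2+X(A_b)^2,X(A_aA_b)^2]$ with $a,b\in\{5,6,7\}$ (for $[\mathcal{H}_6,\mathcal{H}_4]$ and $[\mathcal{H}_6,\mathcal{H}_5]$), and each of these splits into pieces $[X(E)^2+X(F)^2,X(G)]$ with the required sign pattern $[X(E),X(G)]=\pm2X(F)$, $[X(F),X(G)]=\mp2X(E)$ read off from the Clifford relations \eqref{GL-Matrix-Clifford-relations}, hence vanishes.

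For the top of the chain I would use $\mathcal{H}_7=\mathcal{H}_5+\sum_{i=1}^{4}\sum_{j=5}^{7}X(A_iA_j)^2-\Delta_{\mathbb{S}^7}$, in analogy with \eqref{decomposition-of-L-7}. Since $\Delta_{\mathbb{S}^7}$ and $\mathcal{H}_5$ both commute with every $\mathcal{H}_n$, it suffices to check $\bigl[\sum_{i=1}^{4}\sum_{j=5}^{7}X(A_iA_j)^2,\mathcal{H}_n\bigr]=0$ for $n=1,\dots,6$: against a generator $A_aA_b$ with $a,b\le 4$ the terms with $i\notin\{a,b\}$ commute and the remaining ones group as $\sum_{j=5}^{7}\bigl[X(A_aA_j)^2+X(A_bA_j)^2,X(A_aA_b)\bigr]=0$; the tests against $A_5A_6,A_5A_7,A_6A_7$ are analogous; and against the first-order generators $A_5,A_6,A_7$ of $\mathcal{H}_6$ it is more transparent to keep $\mathcal{H}_7=\mathcal{H}_6+\sum_{i=1}^{4}\sum_{j=5}^{7}X(A_iA_j)^2+\sum_{i=1}^{4}X(A_i)^2$, whereupon the bracket against $X(A_k)$, $k\in\{5,6,7\}$, is a cross-cancellation between $\sum_{i=1}^{4}X(A_iA_k)^2$ and $\sum_{i=1}^{4}X(A_i)^2$. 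Once pairwise commutation is established, commutation with the sublaplacian is immediate: from \eqref{Definition_sub_laplacians}, $\Delta_{\textup{T},4}^{\textup{sub}}=-\sum_{j=1}^{4}X(A_j)^2=\Delta_{\mathbb{S}^7}+\sum_{j=5}^{7}X(A_j)^2=\Delta_{\mathbb{S}^7}+(\mathcal{H}_6-\mathcal{H}_5)$, and all three summands commute with each $\mathcal{H}_\ell$.

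The main obstacle is bookkeeping rather than anything conceptual. In the rank~$5$ case every $\mathcal{G}_\ell$ is a sum of squares of second-order fields $X(A_iA_j)$, so all cancellations are of type \eqref{eq_commutators_sum_of_squares}; here the presence of the first-order generators $A_5,A_6,A_7$ in $\mathfrak{h}_6$ (and the corresponding $X(A_k)^2$ in $\mathcal{H}_6$ and inside $-\Delta_{\mathbb{S}^7}$) forces several cancellations that mix $X(A_k)^2$ with $X(A_iA_k)^2$ and do not vanish term by term. Pairing the right summands, and confirming that $[\mathcal{H}_6,\mathcal{H}_5]$ and $[\mathcal{H}_7,\mathcal{H}_6]$ decompose entirely into such pairs with nothing left over, is the step that needs genuine care.
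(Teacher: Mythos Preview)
Your proposal is correct and follows essentially the same route as the paper: the same decompositions $\mathcal{H}_4=\mathcal{H}_3+X(A_5A_6)^2$, $\mathcal{H}_5=\mathcal{H}_3+\sum_{5\le i<j\le 7}X(A_iA_j)^2$, $\mathcal{H}_6=\mathcal{H}_5+\sum_{k=5}^7X(A_k)^2$, and $\mathcal{H}_7=\mathcal{H}_6+\sum_{i=1}^4\sum_{j=5}^7X(A_iA_j)^2+\sum_{k=1}^4X(A_k)^2$ appear in the paper, together with the same pairing trick \eqref{eq_commutators_sum_of_squares} and the cross-cancellation between $\sum_{i=1}^4X(A_iA_k)^2$ and $\sum_{i=1}^4X(A_i)^2$ against $X(A_k)$ for $k\in\{5,6,7\}$. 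Your final observation $\Delta_{\textup{T},4}^{\textup{sub}}=\Delta_{\mathbb{S}^7}+(\mathcal{H}_6-\mathcal{H}_5)$ is actually a bit cleaner than the paper's treatment, which at that point simply appeals to ``similar calculations.''
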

\begin{proof} 
Note that $\mathfrak{g}_{\ell}= \mathfrak{h}_{\ell}$ and therefore $\mathcal{G}_{\ell}= \mathcal{H}_{\ell}$ for $\ell=1,2,3$. Proposition 
\ref{Proposition_commuting_differential_operators} implies that $\mathcal{H}_{\ell}$ for $\ell=1,2,3$ pairwise commute. Moreover, 
\begin{equation*}
\mathcal{H}_4= \mathcal{H}_3+ X(A_5A_6)^2.
\end{equation*}
Since $[X(A_5A_6), X(A_iA_j)]=0$, where $1\leq i<j\leq 4$ one concludes that $\mathcal{H}_4$ commutes with $\mathcal{H}_i$ for $i=1,2,3$. We have:
\begin{equation*}
\mathcal{H}_5= \mathcal{H}_3+ X(A_5A_6)^2+X(A_5A_7)^2+X(A_6A_7)^2
\end{equation*}
showing that $\mathcal{H}_5$ commutes with $\mathcal{H}_i$ for $i=1, \cdots, 3$. A calculation similar to 
(\ref{eq_commutators_sum_of_squares}) shows that 
\begin{equation}\label{commutator-sum-of-squares-rank-4}
\big{[} X(A_5A_7)^2+X(A_6A_7)^2, X(A_5A_6) \big{]} =0. 
\end{equation}
Hence $\mathcal{H}_5$ commutes with $\mathcal{H}_4$. Observe that 
$\mathcal{H}_6=\mathcal{H}_5 + X(A_5)^2+X(A_6)^2+X(A_7)^2$
commutes with $\mathcal{H}_i$, where $i=1, \ldots, 5$. 
The operator $\mathcal{H}_7$ decomposes as: 
\begin{equation}\label{decomposition_J_7}
\mathcal{H}_7=\mathcal{H}_6+\sum_{i=1}^4\sum_{j=5}^7 X(A_iA_j)^2+\sum_{k=1}^4X(A_k)^2. 
\end{equation}
Let $1 \leq \ell< r \leq 4$, or $5 \leq m < n \leq 7$, then 
\begin{align*}
\Big{[} X(A_{\ell}A_r), \sum_{i=1}^4 \sum_{j=5}^7 X(A_iA_j)^2 \Big{]}
&= \Big{[} X(A_{\ell}A_r), \sum_{j=5}^7 \big{(} X(A_{\ell}A_j)^2 +X(A_rA_j)^2 \big{)} \Big{]}=0, \\
\Big{[} X(A_mA_n), \sum_{i=1}^4 \sum_{j=5}^7 X(A_iA_j)^2 \Big{]}
&= \Big{[} X(A_mA_n), \sum_{i=1}^4 \big{(} X(A_iA_m)^2+X(A_iA_n)^2 \big{)} \Big{]}=0,
\end{align*}
by a calculation similar to (\ref{eq_commutators_sum_of_squares}). Finally, note that 
\begin{align*}
\Big{[} \sum_{k=5}^7X(A_k)^2, \sum_{k=1}^4X(A_k)^2 \Big{]}&= -\sum_{k=1}^4\Big{[} \Delta_{\mathbb{S}^7}, X(A_k)^2 \Big{]}
=0, \\
\Big{[} X(A_mA_n), \sum_{k=1}^4X(A_k)^2\Big{]}
&=0, \\
\Big{[}X(A_{\ell}A_r),  \sum_{k=1}^4X(A_k)^2\Big{]}
&= -\Big{[}X(A_{\ell}A_r), \Delta_{\mathbb{S}^7} \Big{]} +\Big{[}X(A_{\ell}A_r), \sum_{k=5}^7 X(A_k)^2 \Big{]}=0. 
\end{align*} 
Hence, $\mathcal{H}_7$ commutes with $\mathcal{H}_i$, where $i=1, \ldots, 6$. By similar calculations it follows that all operators $\mathcal{H}_i$ for $i=1,\ldots, 7$ commute with the sublaplacian $\Delta_{\textup{T},4}^{\textup{sub}}$. 
\end{proof}
\noindent
{\bf Rank 6 trivializable structure (Hopf fibration):} We assign to each of the subalgebras $\mathfrak{k}_{\ell}$ in (\ref{Lie-subalgebras-rank-6-case}) a 
second order differential operators $\mathcal{K}_{\ell}$ as follows: 
\begin{equation*}
\mathcal{K}_{\ell}:= \sum_{E \in\mathfrak{K}_{\ell}}X(E)^2 \hspace{3ex} \mbox{\it where} \hspace{3ex} \ell=1, \ldots, 7.
\end{equation*}
Here $\mathfrak{K}_{\ell}$ denotes the set of generators of the Lie algebra $\mathfrak{k}_{\ell}$ in \eqref{Lie-subalgebras-rank-6-case}. 
\begin{prop}\label{Prop-commuting-differential-operators-5}
The operators $\mathcal{K}_{\ell}$ for $\ell=1,\ldots, 7$ pairwise commute. Moreover, they commute with the sublaplacian 
$\Delta_{\textup{T},6}^{\textup{sub}} = -\sum_{j=1}^6X(A_j)^2$. 
\end{prop}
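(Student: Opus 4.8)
The plan is to argue exactly as in the proofs of Proposition~\ref{Proposition_commuting_differential_operators} and Proposition~\ref{Prop-commuting-differential-operators-3}, the new chain $\mathfrak{k}_1\subset\cdots\subset\mathfrak{k}_7$ in \eqref{Lie-subalgebras-rank-6-case} being handled by the very same mechanisms. Recall that $A\mapsto X(A)$ is a Lie algebra anti-homomorphism from $\mathfrak{so}(8)$ to the vector fields on $\mathbb{S}^7$, i.e.\ $[X(C),X(D)]=-X([C,D])$, and that the Clifford relations \eqref{GL-Matrix-Clifford-relations} give $[A_iA_j,A_rA_k]=0$ for pairwise distinct $i,j,r,k$ and $[A_iA_j,A_iA_r]=2A_jA_r$. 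The one computation doing all the work is the ``sum of squares'' identity \eqref{eq_commutators_sum_of_squares}, namely $\bigl[X(A_iA_j)^2+X(A_rA_j)^2,\,X(A_iA_r)\bigr]=0$ for pairwise distinct $i,r,j$. I will also use that every $X(A_k)$ and every $X(A_iA_j)$ is a Killing field of the round metric, hence commutes with $\Delta_{\mathbb{S}^7}=-\sum_{k=1}^7X(A_k)^2$, and that $A_7$ commutes with $A_iA_j$ for all $1\le i<j\le 6$, so that $X(A_7)$ commutes with every such $X(A_iA_j)$.

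First I would show that $\mathcal{K}_1,\dots,\mathcal{K}_5$ pairwise commute. For $1\le n<\ell\le 5$ write $\mathcal{K}_\ell=\mathcal{K}_n+\sum_{j=n+2}^{\ell+1}\sum_{i=1}^{j-1}X(A_iA_j)^2$ and expand $[\mathcal{K}_\ell,\mathcal{K}_n]$; the summands involving four distinct indices vanish, and the remaining ones, for fixed $i<r\le n+1$ and fixed $j$ with $n+2\le j\le\ell+1$, collect precisely into $[X(A_iA_j)^2+X(A_rA_j)^2,X(A_iA_r)]=0$ by \eqref{eq_commutators_sum_of_squares}. Next, $\mathcal{K}_6=\mathcal{K}_5+X(A_7)^2$, and since $X(A_7)^2$ commutes with every generator $X(A_iA_j)$, $1\le i<j\le 6$, it commutes with $\mathcal{K}_1,\dots,\mathcal{K}_5$; hence $\mathcal{K}_6$ commutes with $\mathcal{K}_1,\dots,\mathcal{K}_5$ as well.

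For $\mathcal{K}_7$ the most economical argument is to note, using Proposition~\ref{prop_onb}, that $\mathcal{K}_7=\sum_{1\le i<j\le 7}X(A_iA_j)^2+\sum_{k=1}^7X(A_k)^2=8\,X(\Omega)$, where $\Omega$ is the Casimir element of the universal enveloping algebra $U(\mathfrak{so}(8))$ with respect to the bi-invariant form $B$ in \eqref{GL-B}. Since $\Omega$ is central in $U(\mathfrak{so}(8))$ and $A\mapsto X(A)$ extends to an anti-homomorphism of $U(\mathfrak{so}(8))$ into the algebra of differential operators on $\mathbb{S}^7$, the operator $\mathcal{K}_7$ commutes with $X(E)$ for every $E\in\mathfrak{so}(8)$, hence with all of $\mathcal{K}_1,\dots,\mathcal{K}_6$ and with $\Delta_{\textup{T},6}^{\textup{sub}}=-\sum_{k=1}^6X(A_k)^2$. (Alternatively one decomposes $\mathcal{K}_7=\mathcal{K}_5+\sum_{j=6}^7\sum_{i=1}^{j-1}X(A_iA_j)^2-\Delta_{\mathbb{S}^7}$ and argues by the same sum-of-squares cancellations, exactly as for $\mathcal{G}_7$ in Proposition~\ref{Proposition_commuting_differential_operators}.) Finally, commutation with the sublaplacian for the remaining operators follows from $\Delta_{\textup{T},6}^{\textup{sub}}=\Delta_{\mathbb{S}^7}+X(A_7)^2$: the term $\Delta_{\mathbb{S}^7}$ commutes with every $\mathcal{K}_\ell$ (Killing fields), while $X(A_7)^2$ commutes with $\mathcal{K}_1,\dots,\mathcal{K}_5$ by the above and with $\mathcal{K}_6=\mathcal{K}_5+X(A_7)^2$ trivially.

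I expect the only point needing real care to be the $\mathcal{K}_7$ step: a purely hands-on commutator computation there would require controlling $[\sum_{i=1}^6X(A_iA_7)^2,X(A_7)^2]$, which is not of the simple sum-of-squares type covered by \eqref{eq_commutators_sum_of_squares}; recognizing $\mathcal{K}_7$ (up to the factor $8$) as the image of the Casimir of $\mathfrak{so}(8)$ — equivalently, observing that $\sum_{1\le i<j\le 7}X(A_iA_j)^2-\Delta_{\mathbb{S}^7}$ is $O(8)$-invariant — bypasses this entirely. Everything else is the same bookkeeping already carried out for the rank $4$ and rank $5$ structures.
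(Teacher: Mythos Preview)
Your argument is correct and, for $\mathcal{K}_1,\ldots,\mathcal{K}_6$ and for commutation with the sublaplacian, is essentially the paper's proof: the paper cites $\mathfrak{k}_\ell=\mathfrak{g}_\ell$ for $\ell\le 4$, treats $\mathcal{K}_5=\mathcal{K}_4+\sum_{k=1}^5X(A_kA_6)^2$ by the sum-of-squares identity, sets $\mathcal{K}_6=\mathcal{K}_5+X(A_7)^2$, and uses $\Delta_{\textup{T},6}^{\textup{sub}}=\Delta_{\mathbb{S}^7}+X(A_7)^2$ exactly as you do.

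The genuine difference is your treatment of $\mathcal{K}_7$. The paper stays hands-on, writing $\mathcal{K}_7=\mathcal{K}_5-\Delta_{\mathbb{S}^7}+\sum_{\ell=1}^6X(A_\ell A_7)^2$ and relying (implicitly) on another variant of the sum-of-squares cancellation, namely $[X(A_\ell A_7)^2+X(A_\ell)^2,\,X(A_7)]=0$, to handle the commutator with $X(A_7)^2$ that worried you. Your primary route---recognizing $\mathcal{K}_7$ as $8$ times the image of the Casimir element of $\mathfrak{so}(8)$ via the anti-homomorphism $A\mapsto X(A)$---is cleaner and more conceptual: it makes the commutation with \emph{every} $X(E)$, $E\in\mathfrak{so}(8)$, immediate in one stroke, and it explains why the same operator $\mathcal{K}_7=\mathcal{G}_7=\mathcal{H}_7$ appears at the top of all three chains. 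The paper's approach, by contrast, keeps everything at the level of elementary commutator identities and avoids invoking the universal enveloping algebra, which matches the style of the surrounding propositions.
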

\begin{proof} 
Note that $\mathfrak{k}_{\ell}= \mathfrak{g}_{\ell}$ and therefore $\mathcal{K}_{\ell}= \mathcal{G}_{\ell}$ for $\ell=1,\ldots, 4$. Hence the proof of Proposition \ref{Proposition_commuting_differential_operators} implies that $\mathcal{K}_{\ell}$ for $\ell=1,\ldots, 4$ pairwise commute. Moreover, the operator $\mathcal{K}_5$ 
decomposes as: 
\begin{equation*}
\mathcal{K}_5=\mathcal{K}_4+\sum_{k=1}^5X(A_kA_6)^2. 
\end{equation*}
Let $1 \leq i <j\leq 5$, then 
\begin{equation*}
\Big{[}X(A_iA_j), \sum_{k=1}^5X(A_kA_6)^2\Big{]}= \Big{[} X(A_iA_j), X(A_iA_6)^2+X(A_jA_6)^2\Big{]}=0
\end{equation*}
by a calculation similar to (\ref{eq_commutators_sum_of_squares}). Hence $\mathcal{K}_5$ commutes with $\mathcal{K}_1, \ldots, \mathcal{K}_4$. Note that 
$\mathcal{K}_6= \mathcal{K}_5+X(A_7)^2$ obviously commutes with $\mathcal{K}_1,\ldots, \mathcal{K}_5$. 
Finally, 
\begin{equation*}
\mathcal{K}_7= \mathcal{K}_6+ \sum_{\ell=1}^6 X(A_{\ell}A_7)^2 + \sum_{k=1}^6 X(A_k)^2= \mathcal{K}_5- \Delta_{\mathbb{S}^7}+ \sum_{\ell=1}^6 X(A_{\ell}A_7)^2 . 
\end{equation*}
Hence, $\mathcal{K}_1, \ldots, \mathcal{K}_7$ pairwise commute and also commute with the sublaplacian $\Delta_{\textup{T},6}^{\textup{sub}}= \Delta_{\mathbb{S}^7}+X(A_7)^2$. 

\end{proof} 
Propositions \ref{Proposition_commuting_differential_operators}, \ref{Prop-commuting-differential-operators-4}, \ref{Prop-commuting-differential-operators-3} and  
\ref{Prop-commuting-differential-operators-5} induce in each of the cases a number of Poisson commuting first integrals adapted to the underlying subriemannian structure. 
\begin{thm}
The (principal) symbols of the operators $\mathcal{G}_{\ell}$ with $\ell=1, \ldots, 7$ are in involution and Poisson commute  with the SR Hamiltonian $H_{\textup{sub}, 5}$. The analogous result holds for the operators  $\mathcal{H}_{\ell}$, $\mathcal{K}_{\ell}$ 
where $\ell=1, \ldots, 7$ and the (principal) symbols of the corresponding sublaplacians. 
\end{thm}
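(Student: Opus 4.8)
The plan is to deduce the statement from the commutation relations of differential operators already established in Propositions \ref{Proposition_commuting_differential_operators}, \ref{Prop-commuting-differential-operators-4}, \ref{Prop-commuting-differential-operators-3} and \ref{Prop-commuting-differential-operators-5}, by passing to principal symbols. The only input from symbol calculus is the classical fact that, for linear differential operators $D_1,D_2$ on a manifold of orders $m_1,m_2$, the commutator $[D_1,D_2]$ has order at most $m_1+m_2-1$, and its principal symbol in that degree equals, up to a fixed non-zero normalising constant, the Poisson bracket $\{\sigma_{m_1}(D_1),\sigma_{m_2}(D_2)\}$ of the principal symbols taken with respect to the canonical symplectic form on $T^{\ast}\mathbb{S}^7$ --- the bracket implicitly used in Section \ref{Section-4} via the momentum map $P$. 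In particular $[D_1,D_2]=0$ forces $\{\sigma_{m_1}(D_1),\sigma_{m_2}(D_2)\}=0$, and since all operators involved here are of second order this applies verbatim.

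First I would record the principal symbols. The canonical vector field $X(E)$ of \eqref{canonical_vf} satisfies $X(E)_q=Eq$ under the identification $T_q\mathbb{S}^7\subset\mathbb{R}^8$, so, regarded as a function on $T^{\ast}\mathbb{S}^7\cong T\mathbb{S}^7$, its principal symbol is $(q,\xi)\mapsto \langle Eq,\xi\rangle=F_E(q,\xi)$ (cf. \eqref{moment_map}). Hence the principal symbol of $\mathcal{G}_{\ell}=\sum_{E\in\mathfrak{G}_{\ell}}X(E)^2$ is a non-zero constant multiple of $\sum_{E\in\mathfrak{G}_{\ell}}F_E^{\,2}$, and by Proposition \ref{prop_onb} together with \eqref{eq_**} this coincides, up to the same constant, with the Thimm integral $f_{\ell}\circ\pi_{\ell}\circ P$ of Proposition \ref{prop-functional-independent-fi-pi}. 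Likewise the principal symbol of the sublaplacian $\Delta^{\textup{sub}}_{\textup{T},5}=-\sum_{i=1}^{5}X(A_i)^2$ is a non-zero multiple of $\sum_{i=1}^{5}F_{A_i}^{\,2}=2H_{\textup{sub},5}$ by \eqref{H_j_sub_definition}, and in the same way the principal symbols of $\Delta^{\textup{sub}}_{\textup{QH}}$, $\Delta^{\textup{sub}}_{\textup{T},4}$ and $\Delta^{\textup{sub}}_{\textup{T},6}$ are positive constant multiples of $H_{\textup{sub},\textup{QH}}$, $H_{\textup{sub},4}$ and $H_{\textup{sub},6}$ respectively, using \eqref{Hamiltonian-QH-definition} and \eqref{Definition_sub_laplacians}.

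With these two ingredients the conclusion is immediate. By Proposition \ref{Proposition_commuting_differential_operators} the operators $\mathcal{G}_1,\dots,\mathcal{G}_7$ pairwise commute and commute with $\Delta^{\textup{sub}}_{\textup{T},5}$; applying the symbol--commutator principle gives $\{\sigma(\mathcal{G}_i),\sigma(\mathcal{G}_j)\}=0$ for all $i,j$ and $\{\sigma(\mathcal{G}_{\ell}),H_{\textup{sub},5}\}=0$ for all $\ell$. The three remaining SR structures are handled identically, invoking Proposition \ref{Prop-commuting-differential-operators-4} for $\mathbb{S}^7_{\textup{QH}}$, Proposition \ref{Prop-commuting-differential-operators-3} for $\mathbb{S}^7_{\textup{T},4}$, and Proposition \ref{Prop-commuting-differential-operators-5} for $\mathbb{S}^7_{\textup{T},6}=\mathbb{S}^7_{\textup{H}}$, together with the symbol computations above. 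There is no genuine analytic difficulty here; the one point that needs care is the normalisation in the symbol--commutator formula, so that the Poisson bracket arising as the leading symbol of a vanishing commutator is literally the bracket used in Section \ref{Section-4}. Alternatively, one can bypass symbol calculus altogether: since $\sigma(\mathcal{G}_{\ell})$ is proportional to $f_{\ell}\circ\pi_{\ell}\circ P$ and, by \eqref{eq_!!!} and its analogues for the chains $\mathfrak{h}_{\bullet}$ and $\mathfrak{k}_{\bullet}$, each SR Hamiltonian is an explicit $\mathbb{R}$-linear combination of the corresponding Thimm integrals, the involution of the symbols and their Poisson commutation with the relevant $H_{\textup{sub}}$ follow directly from Proposition \ref{prop-functional-independent-fi-pi}; I would include this second argument as well, since it simultaneously makes transparent that the symbols in question are precisely the first integrals constructed earlier.
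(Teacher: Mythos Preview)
Your proposal is correct and follows essentially the same approach as the paper: the paper's proof consists of the single observation that the principal symbol of the (vanishing) commutator coincides with the Poisson bracket of the operator symbols, which is precisely your symbol--commutator principle applied to the commutation relations from the earlier propositions. Your write-up is considerably more detailed (identifying the symbols explicitly with the Thimm integrals and offering the alternative route via Proposition \ref{prop-functional-independent-fi-pi} and \eqref{eq_!!!}), but the underlying argument is the same.
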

\begin{proof}
The (principal) symbol of the commutator between two of the operators $\mathcal{G}_{\ell}$ vanishes. It coincides with the Poisson bracket of the operator symbols. 
\end{proof}

\medskip

\noindent {\bf Acknowledgements.}\; 
This work is partially supported by Institut f\"{u}r Analysis, Leibniz Universit\"{a}t Hannover; Research Institute for Mathematical Sciences, an International Joint Usage/Research Center located in Kyoto University; and Ritsumeikan University. 

%%%%%%%%%%%%%%%%%%%%%%%%%%%%%%%%%%%%%%%%%%%%%%%%%%%%%%%%%%%%%%%%%%%%%%%%%%%%%%%%%%%%%%
{

\end{document}